\numberwithin{equation}{section}
 \DeclareMathOperator{\dist}{dist}
 \DeclareMathOperator{\sgn}{sgn}
\DeclareMathOperator{\supp}{supp} 
\DeclareMathOperator{\spec}{spec} 
\DeclareMathOperator{\osc}{osc} 
\DeclareMathOperator{\Ker}{Ker} 
\DeclareMathOperator{\Lip}{Lip} 
\renewcommand{\phi}{\varphi}
\newcommand{\pw}{\mathcal{P}W_\pi}
\newcommand{\he}{\mathcal{H}(E)}
\newtheorem{Thm}{Theorem}[section]
\newtheorem{theorem}[Thm]{Theorem}
\newtheorem{lemma}[Thm]{Lemma}
\newtheorem{proposition}[Thm]{Proposition}
\newtheorem{corollary}[Thm]{Corollary}
\newtheorem{remark}[Thm]{Remark}
\newtheorem{definition}[Thm]{Definition}
\begin{document}
\sloppy
\title[]{The Beurling--Malliavin Multiplier Theorem \\ and its analogs for the de Branges spaces}
\author{Yurii Belov, Victor Havin}
\address{Yurii Belov,
\newline 
Chebyshev Laboratory,
St.Petersburg State University,
St.Petersburg, Russia,
\newline {\tt j\_b\_juri\_belov@mail.ru}
\newline\newline \phantom{x}\,\, 
Victor Havin, 
\newline
Department of Mathematics and Mechanics,
St.Petersburg State University,\hfill\hfill\\
St.Petersburg, Russia,
\newline {\tt havin@VH1621.spb.edu}
}
\thanks{The first author was supported by the Chebyshev Laboratory 
(St. Petersburg State University) under RF Government grant 11.G34.31.0026,
by JSC "Gazprom Neft", and by RFBR grant 12-01-31492. The second author was supported by St. Petersburg State University Action Item 2: NIR "Function theory, operators theory and its applications" 6.38.78.2011.}

\begin{abstract}
Let $\omega$ be a non-negative function on $\mathbb{R}$. 
We are looking for a non-zero $f$ from a given space of entire functions $X$ satisfying 
$$(a) \quad|f|\leq \omega\text{\quad or\quad(b)}\quad |f|\asymp\omega.$$
The classical Beurling--Malliavin Multiplier Theorem corresponds to $(a)$ and the classical
Paley--Wiener space as $X$. We survey recent results for the case 
when $X$ is a de Branges space $\he$. Numerous answers mainly depend on the behaviour of the 
phase function of the generating function $E$.
\end{abstract}

\maketitle

This survey article consists of two parts. The first (Section \ref{s1}) is devoted to the Beurling--Malliavin Multiplier Theorem (the BM-theorem):
\begin{theorem} If $\int_\mathbb{R}\frac{\Omega(x)}{1+x^2}dx<\infty$ where $\Omega$ is a non-negative Lipschitz function on $\mathbb{R}$, then for any $\sigma>0$ there exists a non-zero function $f\in L^2(\mathbb{R})$ such that its Fourier transform vanishes on $\mathbb{R}\setminus[-\sigma,\sigma]$ and $|f|\leq e^{-\Omega}$.
\label{mainth}
\end{theorem}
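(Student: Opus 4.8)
\medskip

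\noindent\textbf{Proof proposal.} The overall plan is to normalise $\Omega$, then construct for a prescribed $a>0$ a non‑zero entire function of exponential type $\le a$ whose modulus on $\mathbb{R}$ is $\le Ce^{-\Omega}$ — this is the substance of the theorem — and finally, taking $a=\sigma-\eps$ with $\eps>0$ small, multiply by one low‑type Paley--Wiener factor to land in $L^2(\mathbb{R})$ with Fourier transform supported in $[-\sigma,\sigma]$. For the \emph{normalisation}, note that replacing $\Omega(x)$ by $\max\{\Omega(x),\Omega(-x)\}$ keeps $\Omega$ non‑negative, Lipschitz and log‑integrable, so we may assume $\Omega$ even; adding a constant costs only a fixed factor in $e^{-\Omega}$, which the conclusion absorbs, so we may also assume $\Omega\ge1$; and, using the Lipschitz bound, we may dominate $\Omega$ by a comparable piecewise‑linear majorant with slopes $\le\mathrm{Lip}(\Omega)$ that is non‑decreasing on $[0,\infty)$ and still log‑integrable (whence $\Omega(x)=o(|x|)$ along a dominating set). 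These reductions are routine; they only serve to tame the counting functions appearing below.

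\emph{The target and where the difficulty sits.} By the Paley--Wiener theorem it suffices to produce, for our $a>0$, a non‑zero entire $f_0$ of exponential type $\le a$ with $\log|f_0(x)|\le-\Omega(x)+O(1)$ on $\mathbb{R}$. The natural candidate is a canonical product $f_0(z)=\prod_{n}\bigl(1-z^2/\lambda_n^2\bigr)$ over a real symmetric set $\{\pm\lambda_n\}$ whose counting function has density $\le a/\pi$ — which bounds the exponential type — and which is locally modulated so that $\sum_{n}\log|1-x^2/\lambda_n^2|\le-\Omega(x)+O(1)$. Since on $\mathbb{R}$ the functions $\log|f_0|$ and $\arg f_0$ are harmonic conjugates away from the zeros, choosing the $\lambda_n$ amounts to realising (approximately) the conjugate function of $\Omega$ as $\pi$ times the counting function of a set of bounded density. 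This cannot be done by a direct formula: the conjugate function of even a bounded, let alone a merely log‑integrable, $\Omega$ need not be monotone nor have bounded increments, so it is not the argument of any meromorphic function with bounded zero density. The positions of the $\lambda_n$ must therefore be produced by a genuine extremal argument, and this is the crux.

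\emph{The extremal construction — main obstacle.} I expect this step to absorb essentially all the work, and it is here that the Lipschitz hypothesis enters decisively. I would run one of the two classical schemes. \emph{(a)} The Beurling--Malliavin \emph{energy method}: minimise an energy integral of Beurling--Malliavin type, e.g.\ of the form $\iint\log\bigl|\tfrac{x+y}{x-y}\bigr|\,d\mu(x)\,d\mu(y)$, over positive measures $\mu$ supported on the set where $\Omega$ is large and subject to a mass/flux constraint equivalent to ``exponential type $\le a$''; the finiteness of $\int_{\mathbb{R}}\Omega(x)(1+x^2)^{-1}dx$ guarantees a minimiser of finite energy, which is then atomised into the required set $\{\pm\lambda_n\}$. \emph{(b)} An \emph{interval‑by‑interval} construction (in the spirit of Koosis): partition $\mathbb{R}$ into blocks on which $\Omega$ is roughly constant, place on each block about as many of the $\lambda_n$ as the local effective density prescribes, and use the Lipschitz bound to show that the total error committed in $\log|f_0|$ is uniformly $O(1)$. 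In either scheme the log‑integrability of $\Omega$ is precisely what keeps the total density $\le a/\pi$, so that $f_0$ genuinely has exponential type $\le a$; the remaining estimates giving $\log|f_0(x)|\le-\Omega(x)+O(1)$ on $\mathbb{R}$ are then routine.

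\emph{Finishing: $L^2$ and non‑triviality.} Carry out the previous two steps with $a=\sigma-\eps$ for some $\eps$ with $0<\eps<\min\{1,\sigma\}$, obtaining a non‑zero entire $f_0$ of exponential type $\le\sigma-\eps$ with $|f_0(x)|\le Ce^{-\Omega(x)}$; since $\Omega\ge1$, $f_0$ is bounded on $\mathbb{R}$. Then $f(z):=C^{-1}f_0(z)\dfrac{\sin(\eps z)}{z}$ is non‑zero, entire of exponential type $\le\sigma$, satisfies $|f(x)|\le e^{-\Omega(x)}\min\{\eps,|x|^{-1}\}\le e^{-\Omega(x)}$, and lies in $L^2(\mathbb{R})$; by the Paley--Wiener theorem its Fourier transform vanishes on $\mathbb{R}\setminus[-\sigma,\sigma]$. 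All the genuine difficulty is confined to the extremal construction of the previous step.
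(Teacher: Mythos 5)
Your proposal is not a proof but an outline that offloads the entire content of the theorem onto two unspecified "classical schemes." The paper itself, being a survey, does not prove Theorem~\ref{mainth} either; Section~1.4 names two complete routes — the original energy method of \cite{BM,M} (finiteness of $E(k)=\iint\bigl(\tfrac{k(x)-k(y)}{x-y}\bigr)^2dxdy$ for $k=\Omega/x$ plus atomisation) and the real-variable route via Theorem~\ref{lipth} combined with Nazarov's Theorem~\ref{Naz} — and refers the reader to \cite{HJ,HMN} for the details. Your sketch aligns with the first of these, and the final step (carrying out the construction at type $\sigma-\eps$ and multiplying by $\sin(\eps z)/z$ to land in $L^2$ with spectrum in $[-\sigma,\sigma]$) is a correct and standard reduction. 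But in the central paragraph you say "I would run one of the two classical schemes'' and then assert that "the remaining estimates giving $\log|f_0(x)|\le-\Omega(x)+O(1)$ on $\mathbb{R}$ are then routine.'' They are not routine; they \emph{are} the Beurling--Malliavin theorem. Whether one minimises an energy (and yours, a logarithmic energy $\iint\log|\tfrac{x+y}{x-y}|\,d\mu\,d\mu$, is not the one appearing in \cite{BM}) or runs a Koosis-type interval decomposition, the existence of the minimiser, its atomisation into a zero set of density $\le a/\pi$, and the verification that the atomised potential still majorises $\Omega$ up to $O(1)$ uniformly on $\mathbb{R}$, constitute the whole difficulty of the result. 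None of this is carried out, so nothing has been proved.

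A second, more concrete defect: your normalisation step claims that, using the Lipschitz bound, one may dominate $\Omega$ by a comparable piecewise-linear majorant with slopes $\le\Lip(\Omega)$ that is \emph{non-decreasing on $[0,\infty)$} and still log-integrable. This is false, and if it were true the theorem would be trivial — as the paper observes in Section~1.2.3, for monotone $\omega$ the convergence of $\mathcal L(\omega)$ is already sufficient for $\omega\in BM$, precisely because monotonicity kills all oscillation. If $\Omega$ oscillates (say it dips back toward~$0$ repeatedly), a non-decreasing function can neither be comparable to it nor, in general, be both $\ge\Omega$ and log-integrable: a non-decreasing majorant is forced to stay at the level of each successive peak of $\Omega$, and $\int(\sup_{|t|\le|x|}\Omega(t))/(1+x^2)\,dx$ can diverge even when $\int\Omega(x)/(1+x^2)\,dx<\infty$. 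The other normalisations (evenness via $\max\{\Omega(x),\Omega(-x)\}$, adding a constant to get $\Omega\ge1$) are fine, and the consequence $\Omega(x)=o(|x|)$ does hold for Lipschitz log-integrable $\Omega$ — but it follows from a direct argument, not from the nonexistent monotone majorant. So as written, the proposal contains both an incorrect reduction and an unexecuted main step; what remains sound is the outer shell (type-$\sigma-\eps$ product, then the $\sin(\eps z)/z$ factor), which is precisely the part that was never in doubt.
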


The term "multiplier" is explained in Subsection \ref{42}.

This deep and difficult result has important connections with problems of harmonic and complex analysis. Published in 1962 (see \cite{BM}) it remains topical even nowadays. The second part (Sections \ref{s2} and \ref{s3}) of this article describes recent analogs of the BM-theorem related to the de Branges spaces of entire functions. 

\textbf{Acknowledgement.} We are grateful to A. Borichev for the permission to expose his construction illustrating the sharpness of the BM-theorem (see Section \ref{constr} below).

\section{On the Beurling--Malliavin Multiplier Theorem\label{s1}}

\subsection{Bounded and semibounded spectra.\label{hardy}}  For a Lebesgue measureable function $f: \mathbb{R}\rightarrow\mathbb{C}$ we denote by $\supp f$ its (closed) support, i.e. $\supp f:= \mathbb{R}\setminus O_f$, where $O_f$ is the union of all open $O$'s such that $f=0$ a.e. on $O$. For $f\in L^2(\mathbb{R})(=L^2)$ we put $\spec f:=\supp \hat{f}$, $\hat{f}$ being the Fourier transform of $f$, $\hat{f}(x)=\int_{\mathbb{R}}f(t)e^{-itx}dx$, $x\in\mathbb{R}$, defined as in the Plancherel theorem. The set $\spec f$ is called {\it the spectrum of } $f$.

A subset $E$ of a ray $(-\infty, a]$ or $[a,+\infty)$, $a\in\mathbb{R}$, is called {\it semibounded}. Put
$$H^2(\mathbb{R})(=H^2):=\{f\in L^2: \spec f\subset[0,+\infty\}.$$
Recall that $H^2(\mathbb{R})$ is a close relative of the Hardy class $H^2(\mathbb{C_{+}})$ of functions $F$ analytic in the upper half-plane $\mathbb{C}_+$ and such that $\sup_{y>0}\int_{\mathbb{R}}|F(x+iy)|^2dx<\infty$. Namely, $H^2(\mathbb{R})$ is the set of all boundary traces of functions $F\in H^2(\mathbb{C}_+)$:
$$f\in H^2(\mathbb{R})\Leftrightarrow f(x)=\lim_{y\rightarrow0+}F(x+iy) \text{ a.e. for  an } F\in H^2(\mathbb{C}_+),$$
and
$$\lim_{y\rightarrow 0+}\int_{\mathbb{R}}|f(x)-F(x+iy)|^2dx=0.$$

$L^2$-functions with bounded spectra also admit a complete description by means of analytic functions. To see this we need the Paley--Wiener class $\mathcal{PW}_\sigma$, $\sigma>0$, of all entire functions $F$ such that 
$$\text{ (a) } F\bigr{|}_{\mathbb{R}}\in L^2\quad\text{ and }\quad\text{(b) } |F(z)|\leq C_Fe^{\sigma|z|},\quad z\in\mathbb{C}.$$
We get an equivalent definition replacing $\sigma|z|$ in (b) by $\sigma|\Im z|$ (see \cite[p. 175]{HJ}).

Now, {\it the following assertions are equivalent for an $L^2$-function $f$ and $\sigma>0$:} 
\begin{enumerate}
\begin{item}
$\spec f\subset[-\sigma, \sigma]$;
\end{item}
\begin{item}
$f$ {\it coincides a.e. on  $\mathbb{R}$ with an $F\in\mathcal{PW}_{\sigma}$.}
\end{item}
\end{enumerate}
This is the famous Paley--Wiener theorem (\cite[p. 174]{HJ}).
\subsection{BM-majorants and the logarithmic integral.} Let $\omega$ be a bounded non-negative function on $\mathbb{R}$. We call it {\it a Beurling--Malliavin majorant} (= $BM$-majorant) and write $\omega\in BM$ if for any $\sigma>0$ there exists a non-zero $f\in L^2$ such that
\begin{equation}
\text{(a) } |f|\leq\omega,\quad \text{(b) }\spec f\subset[-\sigma, \sigma].
\end{equation}
The properties (a) and (b) mean that $f=F\bigr{|}_{\mathbb{R}}$ a.e. for an $F\in\mathcal{PW}_{\sigma}$.
\subsubsection{} To explain the origin, the meaning and the interest of the class $BM$ we need so-called logarithmic integrals. For a Lebesgue measurable function $f:\mathbb{R}\rightarrow\mathbb{C}$ put 
\begin{equation}
\mathcal{L}(f):=\int_{\mathbb{R}}\frac{\log|f(x)|}{1+x^2}dx.
\end{equation}
We call $\mathcal{L}(f)$ {\it the logarithmic integral of } $f$. It makes sense for any $f\in L^2$. In this case $\mathcal{L}(f)<+\infty$, but the equality $\mathcal{L}(f)=-\infty$ is not excluded. It expresses sort of smallness of $f$ and, in particular, may be caused by vanishing of $f$ on a set of positive length or by a fast decay of $|f(x)|$ as $x$ tends to a finite or infinite limit.
\subsubsection{\label{122}}
The following fact is crucial for our theme: {\it for an $L^2$-function $f$ with a semibounded spectrum}
\begin{equation}
\mathcal{L}(f)=-\infty\Rightarrow f = 0 \text{ a.e. }
\label{log}
\end{equation}
(see, e.g., \cite[Part Two, Ch.2]{HJ}) This result is one of innumerable manifestations of the Uncertainty Principle (UP for short) 
forbidding a simultaneous and excessive smallness of a non-zero $f$ and $\hat{f}$ (see, \cite{Koo1, Koo2, Koo3, H, HJ, HMN}, the literature on the UP is very numerous). The smallness of $f$ and $\hat{f}$ in \eqref{log} is expressed by the equalities $\mathcal{L}(f)=-\infty$ and $\hat{f}\bigr{|}_I\equiv0$ where $I$ is a ray.

The implication \eqref{log} is sharp. Moduli of $H^2$-functions admit a complete and simple description: {\it a non-negative non-zero $L^2$-function $\phi$ is the modulus of an $f\in H^2$ if and only if $\mathcal{L}(\phi)>-\infty$; $f$ can be defined by the formula
\begin{equation}
f(x)=\lim_{y\rightarrow0}\exp \biggl{[}\frac{1}{\pi i}\int_{\mathbb{R}}\frac{\log\phi(t)}{t-(x+iy)}\cdot\frac{1+t(x+iy)}{1+t^2}dt\biggr{]}:=O_\phi(x)
\label{out}
\end{equation}
for almost all $x\in\mathbb{R}$} (see, e.g., \cite[sect. 3.6.5]{HMN}). Function $f$ defined by \eqref{out} is called {\it the outer function 
corresponding to } $\phi$. Moreover any non-zero function $f\in H^2$ admits  a representation of the form $O_{|f|}I$, $I$ is an {\it inner} function in $\mathbb{C}_+$ (a bounded analytic function in $\mathbb{C}_+$ with the unimodular trace a.e. on $\mathbb{R}$). This representation is called {\it inner-outer factorization } of $f$.

\subsubsection{} The UP suggests the following question: how small a non-zero $L^2$-function with a {\it bounded} (not just {\it semi}bounded) spectrum can be? The $L^2$ functions with bounded spectra are much "more analytic" than $H^2$-functions, i.e. the boundary traces of $H^2(\mathbb{C}_+)$-functions. This fact complicates the quest of an appropriate form of the UP. The definition of a $BM$-majorant is dictated by this problem. Clearly, the convergence of the integral $\mathcal{L}(\omega)$ is {\it necessary} for a majorant to be in $BM$. But (unlike the case of {\it semi}bounded spectra) it is not sufficient anymore.

There exist non-negative bounded and continuous $\omega$'s with $\mathcal{L}(\omega)>-\infty$, but not in $BM$. To see this consider a bounded interval $I$ with length $|I|$ and center $c(I)$ and put $\phi_I(t):=\frac{2|t-c(I)|}{|I|}$, $t\in I$. For a sequence $\{I_n\}^\infty_{n=1}$ of bounded and pairwise disjoint intervals with $c(I_n)\rightarrow\infty$ as $n\rightarrow\infty$ put
$$\omega(t)=\phi_{I_n}(t)\text{ for } t\in I_n, n=1,2,...,\quad \omega(t)=1 \text{ elsewhere.}$$
Suppose $c(I_n)= \sqrt{n}$. Then $\omega\notin BM$. Indeed, consider a non-zero $F\in\mathcal{PW}_\sigma$ for a $\sigma>0$. Then the number $n(r)$ of zeros of $F$ in the big disc $\{|z|<r\}$ is $O(r)$ (by the Poisson-Jensen inequality), so that the estimate
 $|F|\leq\omega$ on $\mathbb{R}$ is impossible (since $\omega(\sqrt{n})\equiv0$). But at the same time $\mathcal{L}(\omega)=\sum_{n=1}^\infty\int_{I_n}\frac{\log\phi_{I}(t)}{1+t^2}dt>-\infty$ if, say, $\sum_{n=1}^\infty|I_n|<+\infty$.

This argument can be changed slightly to provide a {\it strictly} positive continuous $\omega$ with $\mathcal{L}(\omega)>-\infty$, but not in $BM$ (see \cite{HJ,HMN}).

Note that the walls of the pits on the graph of $\omega$ (i.e. graphs of $\phi_{I_n}$) in the above construction are bound to get more and more steep as $n$ grows. As we will see below, a majorant $\Omega:=-\log\omega$ with $\mathcal{L}(\omega)>-\infty$ and not in $BM$ cannot be Lipschitz. On the other hand its slope $|\Omega'|$ may grow arbitrarily slow as is shown in the next subsection.

We conclude with the following remark: if a majorant $\omega$ does not oscillate, then the convergence of $\mathcal{L}(\omega)$ is not only necessary, but also {\it sufficient} for $\omega$ to {\it be in $BM$}. To be more precise: {\it if a positive $\omega$ is monotone on $(-\infty,0]$ and $[0,+\infty)$, then $\mathcal{L}(\omega)>-\infty\Rightarrow\omega\in BM$.}

This theorem has several proofs (see, e.g. \cite{Koo1,HJ,H,HMN}) and has applications to weighted polynomial approximation and quasianalyticity.

\subsection{More on the oscillations of $BM$-majorants. Borichev's construction.\label{constr}}

In this subsection we expose another approach to majorants with a finite logarithmic integral, but not in $BM$ \cite{Bor}. The result of Subsection \ref{mainBor} shows (in particular) that given an increasing and unbounded $H:\mathbb{R}\rightarrow(0,+\infty)$ there exists an $\Omega\in C^1(\mathbb{R})$ such that $\Omega>0$, $\mathcal{L}(e^{-\Omega})>-\infty$, $|\Omega'|\leq H$, $e^{-\Omega}\notin BM$. This is impossible if $H$ is bounded (by the BM multiplier theorem). Subsections \ref{Bor1}-\ref{Bor2} are preparatory.

\subsubsection{\label{Bor1}} Suppose $\psi:\mathbb{R}\rightarrow\mathbb{C}$ is a Lebesgue measurable function and $\int_{\mathbb{R}}\frac{|\psi(x)|}{1+x^2}dx<+\infty$. We denote by $v_{\psi}$ its harmonic extension to the upper half-plane $\mathbb{C}_+$, i.e. 
$$v_{\psi}(z):=\frac{1}{\pi}\int_{\mathbb{R}}\frac{\Im z}{|t-z|^2}\psi(t)dt,\quad \Im z>0.$$
For a compact interval $I\subset\mathbb{R}$ and $x\in\mathbb{R}$ put $T_I(x):=\dist(x,\mathbb{R}\setminus I)$, a "solitary tooth" of height $|I|\slash2$ based on $I$; $|I|$ stands for the length of $I$. We put $v_I:=v_{T_I}$, $v:=v_{[-1,1]}$; $v_I$ is continuous in $\mathbb{C}_+\cup\mathbb{R}$ and strictly positive in $\mathbb{C}_+$. Clearly $v_I(z)=\frac{1}{2}v\bigl{(}\frac{2(z-c(I))}{|I|}\bigr{)}$, $\Im z \geq0$ (the left and right sides are the Poisson integrals and coincide on $\mathbb{R}$, $c(I)$ is the center of $I$). For a positive $\sigma$ we denote by $\mathcal{E}_{\sigma,1}$ the set of all entire functions $f$ such that 
$$|f(z)|\leq e^{\sigma |z|} \text{ for any } z\in\mathbb{C},\quad |f|\leq 1 \text{ on }\mathbb{R}.$$
Thus $\mathcal{E}_{\sigma,1}$ is invariant under real shifts $z\mapsto z+x$, $x\in\mathbb{R}$ of the argument $z$.
\subsubsection{\label{Bor2}}
The smallness of a function $f\in\mathcal{E}_{\sigma,1}$ is contagious: if $|f|$ is small on an interval it is also small on a much larger concentric interval. This is shown by the next lemma.

\begin{lemma}
For any $\sigma>0$ there exist a (small) $\alpha(\sigma)\in(0,1\slash2)$ and a (big) $h(\sigma)>2$ such that for any $h\geq h(\sigma)$, any $f\in\mathcal{E}_{\sigma,1}$ and any compact interval $I\subset\mathbb{R}$
$$|f|\leq e^{-hT_I}\text{ on }\mathbb{R} \Rightarrow |f|\leq e^{-Ch|I|} \text{ on } \tilde{I}=I_{\alpha(\sigma),h},$$
where $C>0$ is an absolute constant and $\tilde{I}$ is the interval centered at $c(I)$ with $|\tilde{I}|=\sqrt{h^{2\alpha(\sigma)}-1}|I|$.

Note that 
$$\frac{1}{2}h^{\alpha(\sigma)}|I|\leq|\tilde{I}|\leq h^{\alpha(\sigma)}|I|$$
if $h\geq h(\sigma)$ (for big values of $h(\sigma)$).
\label{mainlemma}
\end{lemma}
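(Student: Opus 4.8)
\emph{The strategy.} I would first majorise $\log|f|$ by an explicit function built from the Poisson extension $v_I$ of the tooth $T_I$, and then extract the extra smallness from the fact that the normal derivative of $v_I$ blows up at the endpoints of $I$ --- this is really what makes ``$|f|\le e^{-hT_I}$'' impossible to saturate and forces the gap to spread over the much wider $\tilde I$. Concretely: since $\mathcal{E}_{\sigma,1}$ is invariant under real shifts we may take $c(I)=0$, so $I=[-r,r]$, $|I|=2r$, $T_I(x)=(r-|x|)_+$. A Phragm\'en--Lindel\"of argument in the quadrants upgrades $f\in\mathcal{E}_{\sigma,1}$ to $|f(z)|\le e^{\sigma|\Im z|}$ in $\mathbb C$, so $\log|f(z)|-\sigma\Im z$ is subharmonic and $\le 0$ in $\overline{\mathbb C_+}$. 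As $f\not\equiv0$, the function $fe^{i\sigma z}$ is bounded and analytic in $\mathbb C_+$, so $\log|f|-\sigma\Im z\in L^1(\tfrac{dx}{1+x^2})$ on $\mathbb R$ (the two-sided form of \eqref{log}); comparing $\log|f|\le -hT_I$ on $\mathbb R$ with the Poisson integral of this boundary trace gives $\log|f(z)|\le \sigma\Im z-h v_I(z)$ in $\mathbb C_+$. Repeating the argument for $f^*(z):=\overline{f(\bar z)}$ in $\mathbb C_-$ and reflecting across $\mathbb R$, one obtains $\log|f(z)|\le\Psi(z):=\sigma|\Im z|-hV(z)$ on all of $\mathbb C$, where $V$ is harmonic off $\mathbb R$, even in $\Im z$, continuous, with $V=v_I$ in $\mathbb C_+$ and $V|_{\mathbb R}=T_I$; in particular $\Psi|_{\mathbb R}=-hT_I$.

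\emph{The corner mechanism.} Since $\Psi$ is harmonic off $\mathbb R$, its Laplacian is carried by $\mathbb R$, and computing the jump of $\partial_y\Psi$ there (using $\partial_y v_I(x,0^+)=\tfrac1\pi\log|x^2/(x^2-r^2)|$) yields $\Delta\Psi=2\Lambda\,dx|_{\mathbb R}$ with
$$\Lambda(x)=\sigma-\frac h\pi\log\Bigl|\frac{x^2}{x^2-r^2}\Bigr| .$$
The decisive features: $\Lambda(x)\to-\infty$ as $x\to\pm r$ (logarithmic wells at the corners of $I$); $\Lambda<0$ on the whole zone $r\ll|x|\lesssim r\sqrt{h/\sigma}$; and $\Lambda(x)\to+\infty$ as $x\to 0$, while $\Lambda(x)\approx\sigma-\tfrac{hr^2}{\pi x^2}$ for $|x|\gg r$. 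Note $r\sqrt{h/\sigma}\gg rh^{\alpha}$ as soon as $\alpha<1/2$ and $h$ is large, so $\tilde I$, of half-length $\asymp rh^{\alpha(\sigma)}$, sits well inside the negative zone of $\Lambda$.

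\emph{The extremal step, and the obstacle.} Fix $x_0\in\tilde I$. If $|x_0|\le r/2$ the hypothesis already gives $|f(x_0)|\le e^{-hr/2}$. For $r/2\le|x_0|\le|\tilde I|/2=r\sqrt{h^{2\alpha(\sigma)}-1}$, apply the sub-mean-value inequality for the subharmonic function $\log|f|$ on a circle $\{|z-x_0|=\rho\}$ with $\rho$ comparable to $|x_0|$, tuned so that the circle (nearly) passes through $c(I)=0$; there the weight $\log(\rho/|x-x_0|)$ essentially vanishes and so kills the $+\infty$-spike of $\Lambda$ at the origin. Since $\log|f|\le\Psi$ on that circle,
$$\log|f(x_0)|\le\frac1{2\pi}\int_0^{2\pi}\Psi(x_0+\rho e^{i\theta})\,d\theta=\Psi(x_0)+\frac1\pi\int_{|x-x_0|<\rho}\log\frac{\rho}{|x-x_0|}\,\Lambda(x)\,dx .$$
The crux is to estimate this last integral and show that the negative part of $\Lambda$ (the corner wells at $\pm r$ plus the negative zone $r\ll|x|\lesssim r\sqrt{h/\sigma}$) dominates its positive part (the down-weighted spike near $0$ and the $\sigma|\Im z|$-contribution) by a definite margin, so as to reach $\log|f(x_0)|\le -Chr$, i.e.\ $|f|\le e^{-Ch|I|}$ on $\tilde I$. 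I expect this one-dimensional weighted estimate, carried out with a single choice $\rho=\rho(x_0)$ valid uniformly on $\tilde I$, to be the main technical labour: it is precisely here that the smallness of $\alpha(\sigma)$ is used (to keep $\tilde I$ within the negative zone of $\Lambda$, i.e.\ to keep the far, positive contribution out of play) and that $h(\sigma)$ is chosen large (so the $h$-terms beat the $\sigma$-terms), and it is where the exact scale $|\tilde I|=\sqrt{h^{2\alpha(\sigma)}-1}\,|I|$ is pinned down. A secondary, more routine point is the justification of the Poisson comparison and of the passage $e^{\sigma|z|}\to e^{\sigma|\Im z|}$ in the first step.
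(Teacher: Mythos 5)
Your setup is correct as far as it goes: normalizing $c(I)=0$, the Poisson majorization $\log|f(z)|\le \sigma\Im z-hv_I(z)$ in $\mathbb C_+$, the reflected even extension $\Psi=\sigma|\Im z|-hV$, and the computation $\Delta\Psi=2\Lambda\,dx|_{\mathbb R}$ with $\Lambda(x)=\sigma-\frac h\pi\log\bigl|\frac{x^2}{x^2-r^2}\bigr|$ are all right (the jump of $\partial_y v_I$ is $\frac1\pi\log|x^2/(x^2-r^2)|$, reproducing your formula), and the Riesz/sub-mean-value identity you write is a correct identity. But the strategy built on it — a single circle $\{|z-x_0|=\rho\}$ with $x_0\in\mathbb R$ and the area-mean inequality — cannot reach the stated bound, and the gap you flag as ``the main technical labour'' is not just unfinished, it would fail quantitatively. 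For $x_0$ near the edge of $\tilde I$, $|x_0|\asymp rh^\alpha$ with $\rho=|x_0|$ (the tuning you propose), the term $\frac1\pi\int W\Lambda\,dx$ splits into $\frac{2\sigma\rho}{\pi}$ from the constant $\sigma$ and $-\frac{h}{\pi^2}\int W\ell$ from $\ell(x)=\log|x^2/(x^2-r^2)|$; a direct estimate gives $\int W\ell\,dx\asymp \tfrac{r^2}{x_0}\log\tfrac{x_0}{r}$ (the far tail $\ell\approx r^2/x^2$ contributes $\asymp\tfrac{r^2}{x_0}\log\tfrac{x_0}{r}$, the wells at $\pm r$ and the spike at $0$ contribute only $O(r^2/x_0)$ because $W$ is $O(r/x_0)$ there). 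Hence $\frac1\pi\int W\Lambda\approx\frac{2\sigma rh^\alpha}{\pi}-\frac{\alpha\,rh^{1-\alpha}\log h}{\pi^2}$, i.e.\ the best you can conclude is $\log|f(x_0)|\lesssim -r\,h^{1-\alpha}\log h$. Since $h^{1-\alpha}\log h=o(h)$ for any fixed $\alpha>0$, this never reaches $-Ch|I|$ with an absolute $C$, whatever the choice of $\alpha(\sigma)$ and $h(\sigma)$. Taking $\rho$ larger only makes the $\sigma|\Im z|$ term worse; taking $\rho$ smaller disconnects the circle from the region where $\Psi\approx -hr$. The intuitive reason is dilution: the arc of your circle where $\Psi\approx -hr$ has angular size $\asymp r/\rho\asymp h^{-\alpha}$, and the uniform average over the circle washes that contribution down by exactly this factor.

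The paper avoids this loss. It establishes $|f|\le e^{-ch|I|/2}$ on the whole disk $D_1$ of radius $|I|/2$ centered at $i|I|/2$ (a disk inside $\overline{\mathbb C_+}$ tangent to $\mathbb R$ at $c(I)$, on which $v_I\gtrsim|I|$ uniformly), then uses Hadamard's three-circle theorem on the concentric disks $D_1\subset D_2\subset D_3$ of radii $\frac{|I|}{2},\,h^\alpha\frac{|I|}{2},\,h\frac{|I|}{2}$. Hadamard replaces the arithmetic mean by a \emph{logarithmic} interpolation: the weight given to $\log M_1=\max_{D_1}\log|f|$ at the middle radius is $1-\alpha$, not $h^{-\alpha}$, so $\max_{D_2}\log|f|\le(1-\alpha)(-ch|I|/2)+\alpha\cdot\sigma h|I|\le -Ch|I|$ once $\alpha=\alpha(\sigma)$ is small. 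The chord $D_2\cap\mathbb R$ then has length $\sqrt{h^{2\alpha}-1}\,|I|$, which is exactly $\tilde I$. So the key idea you are missing is the passage from a one-circle sub-mean estimate to a two-circle logarithmic interpolation on an annulus centered in $\mathbb C_+$; the latter is strictly stronger and is what keeps the exponent at the order $h|I|$. (As an aside, your weaker bound $\lesssim -C\,h^{1-\alpha}|I|\log h$ would in fact still suffice for the downstream application in the proof of Theorem \ref{borth}, since it still forces $\int\log|f(x)|x^{-2}\,dx=-\infty$ with the same choice of $|I_k|$; but it does not prove Lemma \ref{mainlemma} as stated.)
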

\begin{proof}
Suppose $f\in\mathcal{E}_{\sigma,1}$, $f\neq0$ and $|f|\leq e^{-hT_I}$ on $I$, $h>1$. Then $v_{\log|f|}$ makes sense (see \cite[p.306]{HJ}). Put $f_{\sigma}(z):=f(z)e^{i\sigma z}$, so that $f_\sigma$ is bounded in $\mathbb{C}_+$ whence
\begin{equation}
\log|f_{\sigma}(z)|\leq v_{\log|f|}(z)\leq -hv_{I}(z),\quad z\in\mathbb{C}_+
\label{logeq}
\end{equation}
We may assume $c(I)=0$ whence $v_I(z)=\frac{|I|}{2}v\bigl{(}\frac{2|z|}{|I|}\bigr{)}$, $z\in\mathbb{C}_+$. Consider three closed concentric disks $D_j, j=1,2,3$ centered at $i\frac{|I|}{2}$ and of radii $R_1=\frac{|I|}{2}, R_2=h^\alpha R_1, R_3=hR_1$
where $\alpha\in(0,1)$ depends on $\sigma$ and will be chosen later. For $z\in D_1$ the point $\frac{2z}{|I|}$ is in the closed disc $d_1$ of radius one centered at $i$. Hence, $v_I(z)\geq c|I|$, $c:=\min_{d_1}v>0$ (note that $v$ is strictly positive on $d_1$). Thus by \eqref{logeq}
$$|f(z)|\leq|e^{-i\sigma z}|e^{-ch|I|}\leq e^{(-ch+\sigma)|I|}\leq e^{-ch|I|\slash2},\quad z\in D_1$$
provided $h\geq \frac{2\sigma}{c}$. Now, $|f(z)|\leq e^{\sigma|\Im z|}\leq e^{\sigma h|I|}$ for $z\in D_3$, and, by the Hadamard three circles theorem
\begin{equation}
\max_{D_2}|f|\leq e^{[-(1-\alpha)C'+\alpha\sigma]h|I|},\quad C'=\frac{C}{2}, h\geq\frac{\sigma}{C'}
\label{D2eq}
\end{equation}
Put $\alpha(\sigma):=\frac{C'}{2(C'+\sigma)}$. Then \eqref{D2eq} becomes 
$$\max_{D_2}|f|\leq e^{-Ch|I|},\quad C:=\frac{C'}{2}$$
whereas $D_2\supset I_{\alpha(\sigma), h}=:\tilde{I}$(= the chord of $D_2$ lying in $\mathbb{R}$).
\end{proof}

\subsubsection{\label{mainBor}}
\begin{theorem}
Let $H$ be a positive function on $\mathbb{R}$ increasing and unbounded on $[0,+\infty)$. Then there exists a non-negative $\Omega\in C^1(\mathbb{R})$ such that
\begin{enumerate}
\begin{item}
$|\Omega'|\leq H$,
\end{item}
\begin{item}
$\int_{\mathbb{R}}\frac{\Omega(t)}{1+t^2}dt<+\infty,$
\end{item}
\begin{item}
$f\in L^2$, $\spec f$ is bounded, $|f|\leq e^{-\Omega} \Rightarrow f= 0 \text{ a.e. }$
\end{item}
\end{enumerate}
\label{borth}
\end{theorem}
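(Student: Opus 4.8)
The plan is to build $\Omega$ by digging a sequence of pits into a mild background, exactly as in the elementary example of Subsection 1.2.3, but now controlling the steepness so that $|\Omega'| \leq H$ while still forcing any Paley--Wiener majorant of $e^{-\Omega}$ to vanish on a set that accumulates at infinity. First I would fix a rapidly increasing sequence of centers $c_n \to +\infty$ and attach to each $c_n$ a tooth $h_n T_{I_n}$ on a compact interval $I_n = I_n(c_n)$, setting
\[
\Omega(t) = \sum_{n\geq 1} h_n T_{I_n}(t) + \text{(small background)},
\]
where the heights $h_n \to \infty$ and the lengths $|I_n| \to 0$ are chosen later. Since $T_{I_n}$ has slope $1$, the slope of the $n$-th tooth is $h_n$, so condition (i) forces $h_n \leq H(c_n - 1)$ on $I_n$; because $H$ is unbounded and increasing, we may nonetheless let $h_n \to \infty$ by pushing $c_n$ out fast enough. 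Condition (ii) is a convergence bookkeeping: $\int_{I_n} \Omega(t)/(1+t^2)\,dt \lesssim h_n |I_n|^2 / c_n^2$, so choosing $|I_n|$ small (e.g. $|I_n| \leq c_n^{2} h_n^{-1} 2^{-n}$, together with $h_n |I_n| \to 0$ so the teeth stay bounded) makes the series converge and keeps $\Omega$ bounded.

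The heart of the argument is condition (iii), and this is where Lemma \ref{mainlemma} does the work. Suppose $f \in L^2$ has $\spec f \subset [-\sigma,\sigma]$ and $|f| \leq e^{-\Omega}$; by Paley--Wiener we may take $f \in \mathcal{PW}_\sigma$, and after normalizing $\|f\|_{L^\infty(\mathbb{R})} \leq 1$ we have $f \in \mathcal{E}_{\sigma,1}$. On $I_n$ we have $|f| \leq e^{-\Omega} \leq e^{-h_n T_{I_n}}$, so provided $h_n \geq h(\sigma)$ (true for $n$ large since $h_n \to \infty$) the lemma upgrades this to $|f| \leq e^{-C h_n |I_n|}$ on the dilated interval $\tilde I_n$ with $|\tilde I_n| \asymp h_n^{\alpha(\sigma)} |I_n|$. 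Now iterate: apply the lemma again on $\tilde I_n$, noting $C h_n |I_n| \asymp C h_n^{1-\alpha} |\tilde I_n|$, so $f$ obeys $|f| \leq e^{-\tilde h_n T_{\tilde I_n}}$ with effective height $\tilde h_n \asymp h_n^{1-\alpha}$, still $\geq h(\sigma)$ for $n$ large. Each iteration multiplies the interval length by $\asymp h_n^{\alpha(\sigma)}$-type factors while the effective height decays only like a power; after $k_n \to \infty$ steps the interval around $c_n$ has length growing to a fixed macroscopic size (say $1$) while the bound on $|f|$ there is still $e^{-c\,h_n^{\beta}|I_n^{(k_n)}|}$ for some $\beta>0$, which $\to 0$ as $n \to \infty$ because the accumulated height $h_n^{\beta}$ blows up faster than any loss from shrinking $|I_n|$ once the $|I_n|$ are chosen not too small relative to $h_n$. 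Hence $f$ vanishes identically on intervals $\tilde I_n^{(k_n)}$ of length $1$ centered at $c_n \to \infty$ — but a nonzero $\mathcal{PW}_\sigma$ function has only $O(r)$ zeros in $\{|z| < r\}$ (Poisson--Jensen), contradiction unless $f \equiv 0$.

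The delicate balancing is the simultaneous satisfaction of (i), (ii) and the iteration in (iii): the slope constraint caps $h_n$ by $H(c_n)$, the convergence constraint pushes $|I_n|$ to be small, but the iteration in (iii) needs $|I_n|$ \emph{not too small} — precisely, we need the product of the $\asymp h_n^{\alpha}$ expansion factors over $k_n$ steps to reach a fixed length while the surviving exponent $h_n^{(1-\alpha)^{k_n}} |I_n^{(k_n)}|$ still tends to $0$. I expect the main obstacle to be arranging these three requirements coherently: one chooses, in order, a very sparse $c_n$ (so that $H(c_n)$ is large, allowing big $h_n$), then $h_n$ just below $H(c_n-1)$, then $k_n = k_n(h_n)$ large, and finally $|I_n|$ in the window $[c_n^2 h_n^{-1} 2^{-n}, \text{(smaller than the zero-counting and boundedness bounds allow)}]$ so that $k_n$ iterations of Lemma \ref{mainlemma} land on a unit interval with vanishing $f$-bound. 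Verifying that such a window is nonempty — i.e. that $\log(1/|I_n|)$ can be made both $\geq$ a constant times $\log c_n + \log h_n$ (for (ii)) and $\leq$ a constant times $h_n^{\beta}$ (so the final exponent still blows up) — is the one genuinely quantitative point, and it works because $h_n \to \infty$ makes $h_n^{\beta}$ eventually dominate any polylogarithmic lower bound. Finally one checks $\Omega \in C^1$ by smoothing the teeth corners on a scale $\ll |I_n|$, which changes none of the estimates.
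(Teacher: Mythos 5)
Your construction of $\Omega$ matches the paper's in spirit (a ``saw'' of teeth $h_n T_{I_n}$ with $h_n$ capped by $H$, $|I_n|$ shrunk to force $\int\Omega/(1+x^2)<\infty$), and the use of Lemma \ref{mainlemma} is the right tool. But your treatment of (iii) has a genuine logical gap.

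The lemma, iterated or not, produces bounds of the form $|f|\leq e^{-m_n}$ on expanded intervals around $c_n$, with $m_n\to\infty$. From $|f|\leq e^{-m_n}$ with $m_n$ finite you cannot conclude that $f$ \emph{vanishes} on those intervals; $f$ is merely small there, and a nonzero function in $\mathcal{E}_{\sigma,1}$ can perfectly well be tiny on a sequence of unit intervals drifting off to $+\infty$ without having a single zero. So the appeal to Poisson--Jensen zero counting does not apply --- that argument works only in the example of Subsection 1.2.3 where the majorant $\omega$ actually equals zero at $\sqrt{n}$, forcing honest zeros of $f$. Here $e^{-\Omega}>0$ everywhere and no zeros are produced. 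Moreover, the iteration itself degrades: after one step the effective ``tooth height'' drops to roughly $h^{1-\alpha}$, and each further step multiplies the absolute exponent by about $2C<1$, so after boundedly many steps the height falls below $h(\sigma)$ and the lemma ceases to be applicable. The interval length also saturates near $h|I|$ rather than reaching any prescribed macroscopic size. So the quantitative bookkeeping you sketch does not close.

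What the paper does instead is apply Lemma \ref{mainlemma} \emph{once} per tooth and then convert the resulting smallness into a divergent logarithmic integral: on $\tilde I_k$ one has $\log|f|\leq -Ch_k|I_k|$, hence
\[
\int_{\tilde I_k}\frac{\log|f(x)|}{x^2}\,dx\ \lesssim\ -\,\frac{h_k^{1+\alpha(\sigma)}|I_k|^2}{x_k^2},
\]
and the choice $h_k|I_k|^2/x_k^2 = 1/\bigl((k+3)\log^2(k+3)\bigr)$ together with $h_k>k$ makes the sum over $k$ diverge, giving $\mathcal L(f)=-\infty$. One then invokes the uncertainty principle for semibounded spectra (Subsection 1.2.2): $\mathcal L(f)=-\infty$ and $\spec f$ bounded force $f\equiv 0$. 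This is the missing step in your argument; replacing the zero-counting finish by this logarithmic-integral finish would repair the proof and also removes the need to iterate the lemma at all.
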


Clearly, $e^{-\Omega}\notin BM$.
\begin{proof}
We prove a slightly weaker assertion providing a piecewise linear continuous $\Omega$ enjoying $(ii)$ and $(iii)$ with $(i)$ fulfilled outside a sparse discrete set, so that regularization is needed to get $(i)$ everywhere. The graph of our $\Omega$ will be a saw with very high and rare sawteeth:
$$\Omega:=\sum_{k=1}^\infty h_kT_k,\quad T_k:=T_{I_k},\quad h_k:=H(x_k),$$
$$I_k:=[x_k,x_k+|I_k|],\quad k=1,2,...,$$
$I_k$ being pairwise disjoint, $k=1,2,...$.  We choose $x_k$ to get 
$$2x_k<x_{k+1},\qquad h_k>k,\quad k=1,2,...$$
The lengths $|I_k|$ are defined by the equalities $(|I_k|^2h_k)x^{-2}_k=\frac{1}{(k+3)\log^2(k+3)}$, so that 
$s:= \sum_{k=1}^{\infty}\frac{|I_k|^2}{x^2_k}h_k<1$ whence
$$\max I_k=x_k+|I_k|<x_k(1+h^{-1\slash2}_k)<2x_k<x_{k+1}=\min I_{k+1}$$
and 
$$\int_{\mathbb{R}}\frac{\Omega(x)}{1+x^2}dx=\sum_{k=1}^\infty\int_{I_k}\frac{h_kT_k(x)}{1+x^2}dx\leq\sum_{k=1}^\infty h_k\frac{|I_k|^2}{x^2_k}<+\infty,$$
and $(ii)$ follows; $(i)$ is obvious on $(x_k, x_k+|I_k|\slash2)\cup(x_k+|I_k|\slash2, x_k+|I_k|)$  by the monotonicity of $H$ on $[0,+\infty)$, and thus it is true everywhere on $\mathbb{R}$ (except for the ends and centers of $I_k$'s). Turn to $(iii)$ and fix a $\sigma>0$ and $f\in\mathcal{E}_{\sigma,1}$ such that $|f|\leq e^{-\Omega}$. The intervals $\tilde{I_k}=\tilde{I_k}(\sigma)$ (see Lemma \ref{mainlemma}) do not overlap for $k\geq k(\sigma)$. Indeed, $s<1$, and therefore 
$$\max \tilde{I_k}(\sigma)\leq x_k+|\tilde{I_k}|\leq x_k+h^{\alpha(\sigma)}_k|I_k|\leq x_k(1+h^{\alpha(\sigma)-1\slash2}_k),$$
whereas 
$$\min \tilde{I}_{k+1}(\sigma)\geq x_{k+1}-|\tilde{I}_{k+1}(\sigma)|\geq x_{k+1}(1-h^{\alpha(\sigma-1\slash2)}_{k+1})>2x_k(1-h^{\alpha(\sigma)-1\slash2}_{k+1})$$
(recall $\alpha(\sigma)<1\slash2$). Thus $\max\tilde{I_k}(\sigma)<\min \tilde{I}_{k+1}(\sigma)$ for $k\geq k(\sigma)$. Hence
by Lemma \ref{mainlemma}
$$\int_1^\infty\frac{\log|f(x)|}{x^2}dx\leq \sum_{k\geq k(\sigma)}\int_{\tilde{I_k}(\sigma)}\frac{\log|f(x)|}{x^2}dx$$
$$\leq-C\sum_{k\geq k(\sigma)}\frac{|I_k|h_k}{x^2_k}h^{\alpha(\sigma)}_k|I_k|=
-C\sum_{k\geq k(\sigma)}\frac{k^{\alpha(\sigma)}}{(k+3)\log^2(k+3)}=-\infty,$$
and so $f\equiv 0$.
\end{proof}

\begin{remark}
The construction of $\Omega$ can be generalized as follows: for a positive sequence $\{t_k\}_{k=1}^\infty$ with $\sum_{k=1}^\infty t_k<1$ find an increasing sequence $\{b_k\}_{k=1}^\infty$ such that $b_1>1$, $\lim_{k\rightarrow\infty}b_kt^N_k=+\infty$ for any $N=1,2,...$; then choose $x_k$ so that $h_k:=H(x_k)>b_k$, $x_{k+1}>2x_k$, $k=1,2,...$, and define $|I_k|$ by $(|I_k|^2\slash x^2_k)h_k=t_k$.
\end{remark}

\subsection{Some reformulations of the BM-Theorem.} In this subsection $\omega$ denotes a function continuous on $\mathbb{R}$ and such that 
\begin{equation}
0<\omega\leq1,\quad \mathcal{L}(\omega)>-\infty.
\label{omega}
\end{equation}
We put $\Omega:=\log\frac{1}{\omega}$ and denote by $\osc_I\Omega$ the oscillation of $\Omega$ on the interval $I$: $$\osc_I(\Omega):=\sup\{\Omega(x)-\Omega(y): x,y\in I\}.$$

\subsubsection{\label{41}}
The following statement is equivalent to the BM-theorem.
\begin{theorem}
Suppose $\omega$ satisfies \eqref{omega} and the oscillations of $\Omega$ on intervals of length one are uniformly bounded, i.e. $C(\omega):=\sup_{|I|\leq1}\osc_{I}\Omega<+\infty$. Then $\omega\in BM$.
\end{theorem}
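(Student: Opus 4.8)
The plan is to derive this statement from the Beurling--Malliavin theorem itself (Theorem \ref{mainth}); the only thing separating the two is that Theorem \ref{mainth} demands a \emph{Lipschitz} majorant, whereas here $\Omega = \log\frac1\omega$ is merely continuous with $C(\omega) < \infty$. So the whole argument reduces to replacing $\Omega$ by a Lipschitz \emph{majorant} $\widetilde\Omega \ge \Omega$ whose weighted integral still converges: once this is done, applying Theorem \ref{mainth} to $\widetilde\Omega$ produces, for each $\sigma > 0$, a non-zero $f \in L^2$ with $\spec f \subset [-\sigma,\sigma]$ and $|f| \le e^{-\widetilde\Omega} \le e^{-\Omega} = \omega$, i.e. $\omega \in BM$. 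Recall from \eqref{omega} that $\Omega \ge 0$ and $\int_\RR \frac{\Omega(x)}{1+x^2}\,dx = -\mathcal{L}(\omega) < +\infty$, so it is exactly these two properties --- non-negativity and convergence of the weighted integral --- that must survive the regularization.

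First I would observe that $C(\omega)<\infty$ already forces $\Omega$ to grow at most linearly. Writing $m_n := \min_{[n,n+1]}\Omega$ for $n \in \ZZ$, the bound $\osc_{[n,n+1]}\Omega \le C(\omega)$ together with the fact that $n+1$ lies in both $[n,n+1]$ and $[n+1,n+2]$ gives $|m_{n+1}-m_n| \le C(\omega)$, hence $m_n \le m_0 + C(\omega)|n|$ and $\Omega(x) \le A + C(\omega)|x|$ for a suitable constant $A$. Fixing any $L > C(\omega)$, I would then set
$$\widetilde\Omega(x) := \sup_{y \in \RR}\bigl(\Omega(y) - L|x-y|\bigr),$$
the smallest $L$-Lipschitz majorant of $\Omega$. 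Taking $y = x$ shows $\widetilde\Omega \ge \Omega \ge 0$; the linear growth bound makes $\Omega(y) - L|x-y| \to -\infty$ as $|y|\to\infty$, so the supremum is finite and attained, and, being a pointwise finite supremum of $L$-Lipschitz functions, $\widetilde\Omega$ is itself $L$-Lipschitz.

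It remains to check that $\int_\RR \frac{\widetilde\Omega(x)}{1+x^2}\,dx < +\infty$, and this is where $C(\omega)<\infty$ is used a second time, now decisively. If the supremum defining $\widetilde\Omega(x)$ is attained at $y$, then chaining the oscillation estimate over the $\lceil |x-y|\rceil$ unit intervals that cover the segment joining $x$ and $y$ gives $\Omega(y) \le \Omega(x) + (|x-y|+1)\,C(\omega)$, whence
$$\widetilde\Omega(x) = \Omega(y) - L|x-y| \le \Omega(x) + C(\omega) - \bigl(L-C(\omega)\bigr)|x-y| \le \Omega(x) + C(\omega).$$
Thus $\widetilde\Omega \le \Omega + C(\omega)$ everywhere, and its weighted integral converges because that of $\Omega$ does and $\int_\RR \frac{dx}{1+x^2} < \infty$. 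This completes the reduction, and hence the proof.

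For the reverse implication --- which is what makes the statement \emph{equivalent} to the BM-theorem --- one need only note that a non-negative Lipschitz $\Omega$ with convergent $\int_\RR\frac{\Omega(x)}{1+x^2}\,dx$ makes $\omega = e^{-\Omega}$ satisfy \eqref{omega} with $C(\omega) \le \Lip(\Omega) < \infty$, so that $\omega \in BM$ is precisely the conclusion of Theorem \ref{mainth}. Since Theorem \ref{mainth} itself is taken as known, there is no deep obstacle here; the only point requiring genuine care is the twofold use of the oscillation hypothesis --- first for the finiteness and Lipschitz regularity of $\widetilde\Omega$, then for the preservation of the logarithmic integral --- and it is exactly this control that is absent in Borichev's examples of Subsection \ref{constr}, where $\mathcal{L}(\omega) > -\infty$ yet $\omega \notin BM$.
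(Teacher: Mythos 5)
Your proof is correct, and it follows the same overall strategy as the paper — regularize $\Omega$ to a Lipschitz function whose weighted logarithmic integral still converges, then invoke Theorem \ref{mainth} — but you use a different regularization gadget. The paper takes the moving average $\Omega_1(x):=\int_{x-1/2}^{x+1/2}\Omega(t)\,dt$, observes that $|\Omega_1'|\le C(\omega)$ and $|\Omega-\Omega_1|\le C(\omega)$, applies the BM-theorem to $\Omega_1$, and then transfers the conclusion to $\omega$ by absorbing the multiplicative constant $e^{\pm C(\omega)}$. You instead take the sup-convolution $\widetilde\Omega(x)=\sup_y\bigl(\Omega(y)-L|x-y|\bigr)$ with $L>C(\omega)$, which is the smallest $L$-Lipschitz \emph{majorant} of $\Omega$; the extra step you need is the a priori linear growth bound $\Omega(x)\le A+C(\omega)|x|$ to make the sup finite, and the chaining estimate $\widetilde\Omega\le\Omega+C(\omega)$ to control the logarithmic integral. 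The trade-off: the moving average is a one-line formula, while your construction buys a genuine pointwise majorant $\widetilde\Omega\ge\Omega$, so that $e^{-\widetilde\Omega}\le\omega$ directly and no constant shifting is needed at the end. Your closing remark about the converse direction (that the statement implies Theorem \ref{mainth}, because a Lipschitz $\Omega$ trivially has $C(\omega)\le\Lip(\Omega)<\infty$) is also correct and matches the paper's claim that the two are equivalent.
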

\begin{corollary} If $\omega$ satisfies \eqref{omega} and $\Omega$ is uniformly continuous, then $\omega\in BM$.
\end{corollary}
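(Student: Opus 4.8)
The plan is to obtain the Corollary as an immediate formal consequence of the Theorem of Subsection~\ref{41}. Since $\omega$ already satisfies \eqref{omega} by assumption, the only thing left to verify is that uniform continuity of $\Omega$ forces $C(\omega):=\sup_{|I|\le1}\osc_I\Omega<+\infty$; once this is known, the Theorem applies verbatim and yields $\omega\in BM$.

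To check the finiteness of $C(\omega)$, I would first fix an arbitrary $\varepsilon>0$ and use the uniform continuity of $\Omega$ on $\mathbb{R}$ to produce a $\delta>0$ with $|\Omega(x)-\Omega(y)|\le\varepsilon$ whenever $|x-y|\le\delta$. Put $n:=\lceil 1/\delta\rceil$. Now take any interval $I$ with $|I|\le1$ and any two points $x,y\in I$, and insert the $n+1$ equally spaced points $x=z_0,z_1,\dots,z_n=y$ on the segment joining them, so that $|z_{j+1}-z_j|=|x-y|/n\le 1/n\le\delta$ for each $j$. The triangle inequality then gives $|\Omega(x)-\Omega(y)|\le\sum_{j=0}^{n-1}|\Omega(z_{j+1})-\Omega(z_j)|\le n\varepsilon$. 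Taking the supremum over $x,y\in I$ and then over all $I$ with $|I|\le1$ yields $C(\omega)\le n\varepsilon<+\infty$, which is precisely the hypothesis of the Theorem.

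There is essentially no obstacle here: the entire mathematical substance of the Corollary is carried by the Theorem (equivalently, by the BM-theorem itself), and the deduction above is just a routine chaining argument exploiting that a unit interval is a bounded union of intervals of length $\le\delta$. The one point worth flagging is that uniform continuity is needed on \emph{all} of $\mathbb{R}$, not merely local continuity: a merely continuous $\Omega$ may well have unbounded oscillation on unit intervals far out, and this is exactly the phenomenon exploited by the oscillation examples of Subsection~\ref{constr} (Borichev's construction), where $|\Omega'|$ is finite everywhere yet grows without bound.
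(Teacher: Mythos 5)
Your proposal is correct and takes the same route the paper has in mind: the Corollary is reduced to the Theorem of Subsection~\ref{41} by observing that uniform continuity forces $C(\omega)=\sup_{|I|\le1}\osc_I\Omega<+\infty$, and your chaining argument (subdividing a unit interval into $\lceil 1/\delta\rceil$ pieces of length $\le\delta$) is the standard way to see this. The paper proves the Theorem and the Corollary together by mollifying $\Omega$ against the unit box kernel to produce a Lipschitz comparable majorant and invoking the BM-theorem, so the uniform-continuity-to-bounded-oscillation step is left implicit there; you have simply supplied it.
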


The deduction of these facts from the BM-theorem is quite simple. Put $\Omega_1(x):=\int_{x-1\slash2}^{x+1\slash2}\Omega(t)dt$, $x\in\mathbb{R}$. Then $\Omega'_1$ and $|\Omega-\Omega_1|$ do not exceed $C(\omega)$ whence $\Omega_1\in \Lip_1(\mathbb{R})$, $\omega_1\slash\omega$ is separated from zero and infinity, so that $\mathcal{L}(\omega_1)>-\infty$, $\omega_1\in BM$ by the BM-theorem, and $\omega\in BM$.

\subsubsection{\label{42}} Here we explain the presence of the term "multiplier" accompanying the BM-theorem.

We say that an entire function $F$ belongs to the Cartwright class and write $F\in Cart$ if 
\begin{enumerate}
\begin{item}
$\int_\mathbb{R}\frac{\log_+|F(x)|}{1+x^2}dx<+\infty,$
\end{item}
\begin{item}
$|F(z)|=O(e^{\sigma|z|}), |z|\rightarrow+\infty$ for a $\sigma>0$.
\end{item}
\end{enumerate}
This class turns out to be useful in Complex and Harmonic analysis (see \cite{Koo1, L, HJ, Lev}). Note that $\mathcal{PW}_\sigma\cup\mathcal{E}_{\sigma,1}\subset Cart$ and $\mathcal{L}(F)$ is finite for any non-zero $F\in Cart$. According to a Krein theorem (\cite[p.192]{HJ}) $Cart$ coincides with the class of all entire functions whose restrictions to the upper and lower half-planes are quotients of functions analytic and bounded in the respective half-plane. The following theorem of Beurling and Malliavin is "parallel" to the Krein theorem.
\begin{theorem}
The Cartwright class coincides with the class of quotients $A\slash B$ where $A,B\in\mathcal{E}_{\sigma,1}$ for a $\sigma>0$. Moreover, for any $\varepsilon>0$ and any $F\in Cart$ there is a $\phi \in\mathcal{E}_{\varepsilon,1}$, $\phi\neq0$ ("a multiplier") such that $\phi F$ is bounded on $\mathbb{R}$.
\label{mult}
\end{theorem}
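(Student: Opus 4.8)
The plan is to deduce Theorem \ref{mult} from the BM-theorem (in the reformulation of Subsection \ref{41}) together with the factorization results for $H^2$-functions recalled in Subsection \ref{122}. First I would treat the "multiplier" statement, which is the heart of the matter; the representation $Cart = \{A/B : A, B \in \mathcal{E}_{\sigma,1}\}$ will follow from it. So fix $F \in Cart$ and $\varepsilon > 0$; the goal is to produce $\phi \in \mathcal{E}_{\varepsilon,1}$, $\phi \not\equiv 0$, with $\phi F$ bounded on $\mathbb{R}$. By condition (i) in the definition of $Cart$ we have $\mathcal{L}(F) < +\infty$, i.e. $\int_{\mathbb{R}} \frac{\log_+|F(x)|}{1+x^2}\,dx < +\infty$. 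The natural candidate for the majorant that $|\phi|$ must respect is $\omega := \min(1, 1/|F|)$ on $\mathbb{R}$, so that $|\phi| \le \omega$ forces $|\phi F| \le 1$ on $\mathbb{R}$. Then $\log\frac1\omega = \Omega = \log_+|F|$ up to a harmless additive adjustment, and $\mathcal{L}(\omega) > -\infty$ is exactly condition (i). If $\omega$ were known to lie in $BM$ we would be essentially done, taking $\sigma = \varepsilon/2$ and noting $\mathcal{PW}_{\varepsilon/2} \subset \mathcal{E}_{\varepsilon/2,1} \subset \mathcal{E}_{\varepsilon,1}$.

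The obstacle is that $\omega = \min(1,1/|F|)$ need not be continuous, let alone have uniformly bounded unit-interval oscillation, so the reformulation in \ref{41} does not apply directly; $F$ can oscillate wildly, even vanish, on $\mathbb{R}$. The standard device is to replace $|F|$ by a smoothed-out dominant. Because $F$ is of exponential type, $\log|F|$ cannot grow too fast locally: one shows, using that $|F(z)| = O(e^{\sigma|z|})$ and a Poisson–Jensen / Cauchy-estimate argument on unit discs, that there is an entire function of exponential type $\varepsilon/4$, or simply an explicit positive majorant $M(x) \ge |F(x)|$ on $\mathbb{R}$, with $\log M$ uniformly continuous (indeed Lipschitz) and still $\int_{\mathbb{R}} \frac{\log_+ M(x)}{1+x^2}\,dx < +\infty$. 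Concretely one can take $M(x) := \sup_{|t-x|\le 1}|F(t)|$ times a fixed constant, or better the Poisson extension of $\log_+|F|$ evaluated slightly above the axis; the exponential-type bound guarantees $M$ is finite and the local growth of $\log M$ is controlled by $\sigma$ plus an absolute constant, giving the needed Lipschitz/uniform-continuity property, while the $\log_+$-integrability of $M$ is inherited from that of $|F|$ by a standard sub-mean-value estimate. Set $\omega_0 := \min(1, 1/M)$; then $\omega_0$ satisfies \eqref{omega} and $\Omega_0 = \log\frac1{\omega_0}$ is uniformly continuous, so the Corollary in \ref{41} gives $\omega_0 \in BM$.

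Applying the definition of $BM$ with the parameter $\varepsilon/2$, we obtain a non-zero $g \in L^2$ with $|g| \le \omega_0$ on $\mathbb{R}$ and $\spec g \subset [-\varepsilon/2,\varepsilon/2]$; by the Paley–Wiener theorem $g$ agrees a.e. with some $\phi \in \mathcal{PW}_{\varepsilon/2} \subset \mathcal{E}_{\varepsilon,1}$, $\phi \not\equiv 0$. Since $|\phi| \le \omega_0 \le 1/M \le 1/|F|$ wherever $|F|\ge 1$ and $|\phi|\le 1$ everywhere, we get $|\phi F| \le \max(1, |F|/M) \le 1$ on $\mathbb{R}$ (after absorbing the fixed constant from the construction of $M$ into $\phi$, which is legitimate since scaling $\phi$ keeps it in $\mathcal{E}_{\varepsilon,1}$). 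This proves the multiplier statement. For the factorization $Cart = \{A/B\}$: given $F \in Cart$ pick $\varepsilon = 1$ and a multiplier $\phi \in \mathcal{E}_{1,1}$ with $B := \phi \in \mathcal{E}_{\sigma,1}$ and $A := \phi F$ bounded on $\mathbb{R}$; since $A$ is entire of exponential type at most $\sigma$ (its type is at most that of $\phi$ plus that of $F$, both finite, so rescale $\sigma$) and bounded on $\mathbb{R}$, $A \in \mathcal{E}_{\sigma,1}$ as well, giving $F = A/B$. The reverse inclusion $\{A/B\} \subset Cart$ is immediate from $\mathcal{E}_{\sigma,1} \subset Cart$ and the fact that (i) and (ii) in the definition of $Cart$ pass to quotients — (ii) clearly, and (i) because $\log_+|A/B| \le \log_+|A| + \log_+(1/|B|)$ and $\mathcal{L}(1/B) = -\mathcal{L}(B) < +\infty$ for the non-zero $Cart$-function $B$. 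The main technical point, and the step I expect to be delicate, is the construction of the uniformly continuous majorant $M$ of $|F|$ with finite logarithmic integral; everything else is assembling the quoted results.
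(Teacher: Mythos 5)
Your overall route is the same one the paper indicates: reduce the multiplier statement to Theorem \ref{mainth} by finding a Lipschitz function $\Omega \ge \log_+|F|$ with $\mathcal{L}(e^{-\Omega}) > -\infty$, and then extract the factorization $F = A/B$ as a corollary. This is precisely the direction $M_1 < M_2$ of Koosis's equivalence, which the paper quotes but does not prove. The deduction of the factorization from the multiplier statement, and the reverse inclusion $\{A/B\} \subset Cart$, are handled correctly in your proposal.

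However, the step you yourself flag as delicate — the construction of the Lipschitz majorant $M$ — is where the proposal has a real gap, and neither of the two candidates you offer does the job. First, the Poisson extension of $\log_+|F|$ evaluated at height $1$ is a \emph{sub}solution relative to the boundary data, not a majorant of it: for a general $u \ge 0$ one does not have $P_1 \ast u \ge u$, and for $u = \log_+|F|$ there is no subharmonicity argument that would rescue this (the correct Poisson--Jensen inequality bounds $\log|F(x+iy)|$, not $\log|F(x)|$, by a Poisson integral). So $e^{P_1 \ast \log_+|F|}$ need not dominate $|F|$ on $\mathbb{R}$. Second, the local supremum $M(x) := \sup_{|t-x|\le 1}|F(t)|$ does dominate $|F|$, but $\log M$ is not Lipschitz, nor even of uniformly bounded oscillation on unit intervals, as a consequence of the exponential-type bound alone. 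The natural tool to propagate an upper bound from $[x-1,x+1]$ to $[x-2,x+2]$ is the two-constants theorem in a slit disc, and the harmonic measure of the slit seen from a point just past its endpoint behaves like $1 - c/\log R$ when the outer circle has radius $R$; since the bound on that circle grows linearly in $|x|$, the resulting oscillation estimate is of order $|x|/\log|x|$, not $O(1)$. In short, "exponential type plus finite logarithmic integral" does not by itself control the unit-interval oscillation of $\sup|F|$; the genuine proof of $M_1 < M_2$ in Koosis uses finer structure of the Cartwright class (zero density, Hadamard factorization, cancellation in the Riesz mass) to produce the Lipschitz majorant. As the paper is a survey it simply cites Koosis for this lemma; your proposal should do the same rather than offer constructions that do not verify.
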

This result is {\it equivalent } to Theorem \ref{mainth}. This was proved by Koosis (\cite{Koo2}). To sketch his argument let us say that, given classes $M$ and $N$ of functions defined and positive on $\mathbb{R}$, $M$ {\it minorizes} $N$ if for any $n\in N$ there is $m\in M$ such that $m\leq n$; then we write $M<N$. Put 
$$M_1:=\{\omega: 0<\omega\leq1,\log\omega\in \Lip_1(\mathbb{R}), \mathcal{L}(\omega)>-\infty\},$$
$$M_2:=\{1\slash|f\bigr{|}_\mathbb{R}|: f\in Cart, |f|\geq1\text{ on }\mathbb{R}\}.$$
Koosis proved that $M_1<M_2$ and $M_2<M_1$ whence $M_j\subset BM$ implies $M_k\subset BM$ for any choice of $j,k=1,2$. Note that 
none of $M_1$, $M_2$ is contained in the other (see \cite{HMN}).

\subsection{On the proofs of the BM-theorem}
We conclude this section with a short discussion of the original proof in \cite{BM} and \cite{M} (see also \cite[Part 2, Ch.3]{HJ}) and with some information on the subsequent proofs.

\subsubsection{}
The proofs in \cite{BM} and \cite{M} result in a very general assertion implying the statements in our Subsections \ref{Bor1} and \ref{Bor2}. This assertion invloves the integral $E(k):=\int_\mathbb{R}\int_\mathbb{R}\bigl{(}\frac{k(x)-k(y)}{x-y}\bigr{)}^2dxdy$ where $k(x)=\frac{\Omega(x)}{x}$ (we assume $\Omega\equiv 0$ in a vicinity of the origin). 
The convergence of $E(k)$ means that "the energy" $\int_{\mathbb{C}_+}|\bigtriangledown u|^2dxdy$ of the harmonic extension $u$ of $k$ to $\mathbb{C}_+$ (by the Poisson integral) is finite. It is shown in \cite{BM} that the estimates $E(k)<+\infty$ and $\mathcal{L}(\omega)>-\infty$ imply the existence (for a given $\sigma>0$) of a non-zero $f\in L^2$ with $\spec f\subset[-\sigma,\sigma]$ such that $\int_\mathbb{R}\bigl{(}\frac{|f(x)|}{\omega(x)}\bigr{)}^2dx<+\infty$ or $|f|\leq\omega_\varepsilon$ where
$\omega_\varepsilon$ is a regularization of $\omega$ (see \cite{BM,M,HJ} and a detailed discussion in \cite{HMN}) .

\subsubsection{}
The assertions in \ref{41} and \ref{42} were repeatedly reproved many times. These proofs used various approaches and techniques interesting in their own right (see \cite{M, Koo1, Koo2, Koo3, HJ, HMN, H} and literature therein). The next subsection sketches a real variable approach proposed in \cite{HM1, HM2} where it led to simple proofs of some particular cases of the BM-theorem. A complete proof based on this approach and on a deep Nazarov's theorem on the Hilbert transform of Lipschitz functions (see \cite{HMN}) is also described briefly.

\subsubsection{}
Let $f$ be a function in $L^1((1+x^2)^{-1}dx)$. Put
$$\tilde{f}(x):=\frac{1}{\pi}p.v.\int_\mathbb{R}f(t)\biggl{(}\frac{1}{x-t}+\frac{t}{1+t^2}\biggr{)}dt$$
(the principal value of the integral exists and is finite for almost all $x\in\mathbb{R}$). The function $\tilde{f}$ is called {\it the Hilbert transform } of $f$. Return to the majorant $\omega:\mathbb{R}\rightarrow(0,1]$ with $\Omega:=|\log\omega|$ in $L^1((1+x^2)^{-1}dx)$. The following theorem is a corollary of a theorem by Dyakonov \cite{D} on the moduli of functions from the model spaces (see also Section 2 below):
\begin{theorem}\textnormal{(}\cite[Section 1.14]{HMN}\textnormal{)}
If $\Omega\in \Lip_1(\mathbb{R})$ and $\lim_{|x|\rightarrow\infty}\widetilde{\Omega}'(x)=0$, then $\omega\in BM$.
\label{lipth}
\end{theorem}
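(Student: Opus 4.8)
The plan is to realize the BM-majorant $\omega$ as (up to a harmless normalization) the modulus of a Paley--Wiener function, using the outer function machinery from Subsection \ref{122} together with a suitable inner factor that provides the exponential type bound. More precisely, fix $\sigma>0$. Since $\mathcal{L}(\omega)>-\infty$ and $\Omega=|\log\omega|\in L^1((1+x^2)^{-1}dx)$, the outer function $O_{\omega}\in H^2(\mathbb{C}_+)$ is well defined by \eqref{out} and satisfies $|O_\omega|=\omega\leq 1$ on $\mathbb{R}$. The candidate multiplier is $f:= O_{\omega}\cdot e^{i\sigma z}$, or rather the product of $O_\omega$ with an appropriate analytic function of exponential type $\sigma$; one should restrict attention to $|x|$ large where $\Omega$ is defined and genuinely $\Lip_1$, and modify $\omega$ near the origin. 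The point to verify is that $f$ extends to an entire function of exponential type at most $\sigma$, i.e. that $f\in\mathcal{PW}_\sigma$ (equivalently that $\spec f\subset[-\sigma,\sigma]$), and that $f\not\equiv 0$; nonvanishing is immediate since $O_\omega$ is outer, so the whole issue is the type and the entire-ness.

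The key step, and this is where Dyakonov's theorem enters, is to control the analytic completion of $\log\omega$. Write $\log O_\omega = -U - i\widetilde{U}$ on $\mathbb{R}$ where $U$ is the harmonic extension of $\Omega$ to $\mathbb{C}_+$ and $\widetilde U$ (suitably normalized) is its conjugate, so that on $\mathbb{R}$ the boundary function is $-\Omega - i\widetilde\Omega$. Thus $|O_\omega(x)| = \omega(x)$ but the argument of $O_\omega$ is $-\widetilde\Omega$. The requirement that $f = O_\omega \cdot (\text{something of type }\sigma)$ be of exponential type translates, via the classical description of Cartwright-class functions, into a condition on the growth of the harmonic conjugate: one needs $\widetilde\Omega(x)$ to behave like $o(|x|)$ at infinity with the right one-sided control, and the hypothesis $\widetilde\Omega'(x)\to 0$ as $|x|\to\infty$ gives exactly $\widetilde\Omega(x) = o(|x|)$, indeed with the derivative vanishing at infinity. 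Combined with $\Omega\in\Lip_1$ (which bounds $\Omega(x) = O(|x|)$ and makes all the Hilbert-transform manipulations legitimate, $\Omega$ being then in $L^1((1+x^2)^{-1}dx)$ with controlled local oscillation), one concludes that $O_\omega$ itself already lies in the Cartwright class and in fact, after multiplication by $e^{i\eps z}$ for any $\eps>0$, becomes an element of $\mathcal{E}_{\eps,1}$; this is precisely the content of Dyakonov's description of moduli of functions in model (de Branges/Paley--Wiener) spaces, which is what \cite[Section 1.14]{HMN} invokes.

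Concretely the steps, in order, are: (1) reduce to $\omega$ that equals $1$ near $0$ and satisfies $\Omega\in\Lip_1(\mathbb{R})$, $\widetilde\Omega'(x)\to 0$, using the same regularization trick as in Subsection \ref{41} (convolve with a bump on an interval of length one; this changes $\Omega$ by a bounded amount and changes $\widetilde\Omega'$ negligibly at infinity); (2) form $O_\omega$ via \eqref{out} and record $|O_\omega| = \omega$ on $\mathbb{R}$, $O_\omega\ne 0$; (3) show $O_\omega e^{i\sigma z}$ is bounded in $\mathbb{C}_+$ and, using the growth $\widetilde\Omega(x) = o(|x|)$ furnished by the hypothesis on $\widetilde\Omega'$, show $O_\omega e^{-i\sigma z}$ is bounded in $\mathbb{C}_-$ after an analogous reflection argument, so that by Krein's theorem $O_\omega\in Cart$ of exponential type $0$; (4) multiply by $e^{i\sigma z}$ to land in $\mathcal{E}_{\sigma,1}\subset\mathcal{PW}_\sigma$, whence $f := O_\omega e^{i\sigma z}\big\vert_{\mathbb{R}}\in L^2$ with $\spec f\subset[-\sigma,\sigma]$ and $|f| = \omega$ on $\mathbb{R}$; (5) since $\sigma>0$ was arbitrary, $\omega\in BM$.

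The main obstacle is Step (3): proving that the harmonic conjugate $\widetilde\Omega$ has the right growth and, more delicately, that the analytic function $O_\omega$ genuinely has exponential type $0$ rather than merely subexponential growth along the real axis — one must propagate the real-line estimate into the half-planes, which is exactly the nontrivial input. This is where the cited theorem of Dyakonov \cite{D} (and the surrounding discussion in \cite{HMN}) does the real work: it characterizes exactly which $\omega$ arise as $|\Theta u|$ for $u$ in a model space, and the condition $\Omega\in\Lip_1$ together with $\widetilde\Omega'\to 0$ is tailored to fall inside that characterization for the Paley--Wiener spaces $\mathcal{PW}_\sigma$, $\sigma>0$ arbitrarily small. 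Everything else — the outer-function formula, boundedness in $\mathbb{C}_+$, Krein's quotient theorem, the passage $\mathcal{E}_{\sigma,1}\subset\mathcal{PW}_\sigma\subset Cart$ — is standard Hardy-space bookkeeping that I would not belabor.
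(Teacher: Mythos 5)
Your plan hinges on $O_\omega$ being an \emph{entire} function of exponential type zero, but this is the very point where the argument breaks down. The outer function $O_\omega$ given by \eqref{out} is analytic only in $\mathbb{C}_+$; it has no canonical extension across $\mathbb{R}$, and for a generic $\Lip_1$ function $\Omega$ it does not extend to an entire function at all (indeed $\overline{O_\omega}\big\vert_{\mathbb{R}}$ is the boundary trace of a function analytic in $\mathbb{C}_-$, not in $\mathbb{C}_+$, so $O_\omega e^{i\sigma z}$ fails the Paley--Wiener criterion unless $O_\omega$ is essentially real on $\mathbb{R}$). Krein's theorem cannot rescue this: it characterizes which \emph{already entire} functions lie in $Cart$, it does not manufacture an analytic continuation from growth estimates in a single half-plane. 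You also quietly upgrade the goal from $|f|\leq\omega$ to $|f|=\omega$ --- but exact realization of $\omega$ as a modulus in $\mathcal{PW}_\sigma$ is strictly harder and typically false; this is precisely why the criteria in Section \ref{s2} involve an extra bounded multiplier $m$, an inner factor $I$, and an integer-valued $k$, which together record the unavoidable corrections (zero placement and amplitude damping) needed to turn a modulus datum into an actual element of a model space. A smaller slip: $\mathcal{E}_{\sigma,1}\not\subset\mathcal{PW}_\sigma$ (constants lie in the former but not the latter).

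The paper's route is genuinely different and avoids these issues. One passes to $\Theta = e^{2i\sigma z}$, so that $K_\Theta$ is (up to the unimodular factor $e^{i\sigma z}$) the Paley--Wiener space, and applies the representation criterion $\phi_\Theta - 2\widetilde{\log\omega} = 2\widetilde{\log m}+\arg I+2\pi k$ from Theorems \ref{HMTh} and \ref{HMTh2}: one shows that $\phi_\Theta-2\widetilde{\log\omega}(x)=2\sigma x + 2\widetilde{\Omega}(x)$ is mainly increasing, which is where the hypothesis $\widetilde{\Omega}'(x)\to 0$ (forcing the derivative $2\sigma+2\widetilde\Omega'(x)\to 2\sigma>0$) and the Lipschitz control enter. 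The model-space framework then hands you a genuinely entire $f$ with $|f|\leq\omega$ automatically --- the entire-ness is built into membership in $K_\Theta$, and the inequality (rather than equality) is exactly the slack that makes the problem solvable. If you want to salvage an outer-function-first argument, you would need to introduce those correction factors $m$, $I$, $k$ explicitly and prove they can be chosen so that the resulting product continues across $\mathbb{R}$; that is the content of Dyakonov's theorem you are citing, not a consequence of the growth of $\widetilde\Omega$ alone.
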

This condition sufficient for the inclusion $\omega\in BM$ is remotely similar to the $BM$-theorem, but is much less explicit being stated in terms of $\widetilde{\Omega}$, not of $\Omega$ itself. Recall that the Hilbert transform of  a $\Lip_1$-function is not bound to be $\Lip_1$, it may be not uniformly continuous and even worse (see \cite{Bl1}). The last theorem immediately implies the following corollary:
$$\Omega\in \Lip_\alpha(\mathbb{R})\quad\text{ and }\quad0<\alpha<1\quad\Rightarrow\quad\omega\in BM,$$
which is much weaker than the BM-theorem. But Theorem \ref{lipth} does imply the BM-theorem in its complete form due to the following deep result by F. Nazarov (\cite[Theorem 2]{HMN}):
\begin{theorem}
If $\Omega\in L^1((1+x^2)^{-1}dx)$, $\Omega\geq0$, $\varepsilon>0$, then there exists an 
$\Omega_1\in L^1((1+x^2)^{-1}dx)$ such that $\|\widetilde{\Omega}'\|_\infty<\varepsilon$, $\Omega_1\geq\Omega$.
\label{Naz}
\end{theorem}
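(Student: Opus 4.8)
The plan is to reduce the statement to a result about the Hilbert transform acting on a suitable class of functions, and then to perform a dyadic decomposition of $\mathbb{R}$ adapted to the size of $\Omega$. First I would reduce to the case where $\Omega$ is bounded and compactly supported: given $\Omega \in L^1((1+x^2)^{-1}dx)$ with $\Omega \geq 0$, one truncates $\Omega$ at level $M$ and restricts attention to $[-R,R]$; the tails contribute a harmless smooth term to any majorant because $\int_{|x|>R} \frac{\Omega(x)}{1+x^2}dx \to 0$, and one can absorb the missing mass into a fixed slowly varying function $\Psi$ (for instance a constant multiple of $\log(1+x^2)$ truncated appropriately) whose Hilbert transform has a derivative tending to $0$ at infinity. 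So it suffices to build $\Omega_1 \geq \Omega$ on a compact interval with $\|\widetilde{\Omega_1}'\|_\infty$ small.

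Next, the heart of the matter: I would write $\Omega = \sum_j \Omega \cdot \chi_{\Delta_j}$ over a Whitney-type or dyadic partition $\{\Delta_j\}$ of the interval into pieces on which $\Omega$ is "roughly constant" in the sense of its local oscillation, and then replace each piece by a function $\Omega_1$ that majorizes $\Omega$ but is flattened out so that its variation, measured against the length of $\Delta_j$, is controlled. The key analytic input is a quantitative bound for $\|(Pg)'\|_\infty$ where $Pg$ is the Hilbert transform of an atom $g$ supported on an interval of length $\ell$ with $\int g = 0$ and $\|g\|_\infty \leq A$: such atoms contribute $O(A)$ to the sup-norm of the derivative, but crucially with a decay in the distance from the supporting interval, so that the contributions of distant intervals are summable. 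The construction must arrange two competing things: $\Omega_1 \geq \Omega$ pointwise (so we are forced to go \emph{up}, never down, which is what makes the problem delicate — one cannot simply mollify), and yet the total variation of $\Omega_1$ weighted appropriately is $o(1)$ in the relevant norm. This is achieved by allowing $\Omega_1$ to rise only on a sparse set of "transition zones" whose total logarithmic length is small, exactly as in the Borichev-type constructions earlier in the paper but run in reverse — there the steep teeth destroyed membership in $BM$; here we want to smooth everything out while paying a controlled price.

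The main obstacle, and the reason this is Nazarov's deep theorem rather than an exercise, is precisely that one is constrained to build an \emph{upper} envelope: mollification lowers peaks, which is forbidden, so $\Omega_1$ must exceed $\Omega$ everywhere while still having a small Hilbert-transform derivative — and $\|\widetilde{h}'\|_\infty$ is an extremely rigid quantity, not controlled by any $L^p$ norm of $h$ or of $h'$ (recall the remark in the text that $\widetilde{\Omega}$ need not even be uniformly continuous when $\Omega \in \Lip_1$). One must therefore exploit cancellation between the many local modifications, which forces a careful bookkeeping of how the "cost" $\|(P g_j)'\|_\infty$ of each correction $g_j$ decays with distance and how the costs aggregate; getting the geometric series of contributions to sum to something smaller than $\varepsilon$, uniformly in $x$, is where all the work lies. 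I would expect the argument to require a stopping-time selection of the intervals $\Delta_j$ (choosing them maximal subject to a bound on $\osc_{\Delta_j}\Omega$ relative to $|\Delta_j|$), followed by an estimate showing that the overlap of the enlarged transition intervals $\widetilde{\Delta_j}$ is bounded, so that the pointwise sum defining $\widetilde{\Omega_1}'$ is dominated by a bounded-overlap sum of tails, each of which is $O(\varepsilon \cdot 2^{-\text{dist}})$ by the atomic estimate.
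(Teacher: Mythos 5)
The paper itself does not prove Theorem \ref{Naz}: it is a survey, and this statement is quoted as a black box from \cite[Theorem 2]{HMN} precisely because it is the deep ingredient that lifts Theorem \ref{lipth} to the full BM-theorem. So there is no ``paper proof'' to compare against; your proposal has to stand on its own, and as written it is a plan rather than a proof. The places where you write ``this is achieved by \dots'', ``one must exploit cancellation'', ``getting the geometric series to sum \dots is where all the work lies'' are exactly the places where the construction of $\Omega_1$ has to be carried out, and nothing in the proposal pins it down. That is not a pedantic complaint: the whole point of Nazarov's theorem is that a naive local modification scheme does \emph{not} visibly work, and what is needed is the explicit global majorant, not a statement that one exists.

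Two concrete defects. First, the reduction to bounded, compactly supported $\Omega$ is not legitimate. The constraint $\Omega_1\geq\Omega$ is global, and $\Omega\in L^1\bigl((1+x^2)^{-1}dx\bigr)$ allows, e.g., $\Omega(x)\asymp |x|^{1/2}$, which no truncated multiple of $\log(1+x^2)$ majorizes; discarding the tail and ``absorbing the missing mass'' into a fixed slowly varying $\Psi$ throws away an unbounded part of the very function you must dominate. Any correct argument has to build the tail of $\Omega_1$ \emph{together} with the bulk, and in fact the tail behaviour is where the Lipschitz constraint on $\widetilde{\Omega_1}$ is most restrictive. Second, the ``atomic estimate'' you lean on is stated with insufficient hypotheses: for $g$ supported on an interval of length $\ell$ with $\int g=0$ and merely $\|g\|_\infty\leq A$, the quantity $\|(\widetilde{g})'\|_\infty$ is \emph{not} $O(A)$ --- take $g=A\,\sgn$ on $[-\ell/2,\ell/2]$ and $\widetilde{g}$ has a logarithmic singularity, so $\widetilde{g}'$ is unbounded. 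You need the atoms to be Lipschitz (or better) and, more importantly, you need the modifications to be chosen so that their \emph{local} Hilbert-transform derivatives cancel to order $\varepsilon$, which is the part your sketch postpones. Until those two gaps are filled --- a construction of the majorant that works on all of $\mathbb R$, and a worked-out estimate for $\|\widetilde{\Omega_1}'\|_\infty$ with the correct regularity on each piece --- this remains a heuristic outline of why the theorem is hard, not an argument that it is true.
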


\subsubsection{On applications of the BM multiplier theorem. \label{secondBM}} We have already mentioned connections of the BM-theorem with the Uncertainty Principle (see 1.2.1). Applications to various problems of analysis are discussed in \cite{Koo2} (the title of Chapter 10 is "Why we want to have multiplier theorems"). Among the themes related to the BM-theorem are normal families of trigonometrical sums, weighted estimates of the Hilbert transform, weighted polynomial approximation and approximation by functions with bounded spectra (\cite[pp. 468--469]{Koo2}, \cite[p.174]{Koo3}, \cite[Ch. 3]{deBr}, \cite[sect. 3.2]{HMN}).

We turn now to the so-called "Second BM-theorem" on complete families of exponentials $E_\lambda$, $E_\lambda(x):=e^{i\lambda x}$, $x\in\mathbb{R}$. Given a discrete set $\Lambda$ of real numbers and a positive number $R$ we ask whether the family $E(\Lambda):=\{E_\lambda\}_{\lambda\in\Lambda}$ is complete in $L^2(-R,R)$. This natural question is, however, too precise to admit a clear and efficient answer. The following problem looks more realistic: given $\Lambda$ find 
$$R(\Lambda):=\sup\{r\geq0: E(\Lambda)\text{ is complete in } L^2(-r,r)\}.$$
The famous "Second BM-theorem" yields an explicit expression of $R(\Lambda)$ as certain density $D(\Lambda)$ of $\Lambda$ at infinity. We won't reproduce here the definition of $D(\Lambda)$ and only hint at the connection of the "First" BM-theorem (as stated in \ref{42}) with the "Second". An obvious duality argument reduces the equality $R(\Lambda)=D(\Lambda)$ to the following uniqueness problem for the Paley--Wiener class $\mathcal{PW}_r$ (see 1.1): is it true that
\begin{enumerate}
\begin{item}
$0<r<D(\Lambda) \Rightarrow \Lambda$ is a uniqueness set for $\mathcal{PW}_r$,
\end{item}
\begin{item}
$r>D(\Lambda)\Rightarrow$ there exists a non-zero $f\in\mathcal{PW}_r$ vanishing on $\Lambda.$
\end{item}
\end{enumerate}
Part $(i)$ is relatively easy. Part $(ii)$ is quite hard. The multiplier theorem allows to replace $\mathcal{PW}_r$ in $(ii)$ by a larger class $\{f\in Cart: |f(z)|=O(e^{r|z|}), |z|\rightarrow\infty\}$, thus simplifying the problem. For the proofs of the "Second BM-theorem" and its generalizations see \cite{Koo2, Koo3, HJ, MP1}; these items contain a lot of further references.

\section{On the moduli of functions in the de Branges spaces\label{s2}}

\subsection{Setting of the problems} We turn now to moduli majorants for the de Branges spaces of entire functions. The Paley--Wiener spaces $\mathcal{PW}_\sigma$ (see 1.1) are a particular case. Given a non-negative function $\omega$ on $\mathbb{R}$ we are still interested in the existence of a non-zero element $f$ of a given space of entire functions satisfying the estimate $|f|\leq\omega$. We will also consider the possibility of two-sided estimates $|f|\asymp\omega$ (see Subsection \ref{two}). These two themes can be combined as we shall see later.

\subsubsection{De Branges spaces.} For a function $f:\mathbb{C}\rightarrow\mathbb{C}$ put $f^*(z)=\overline{f(\overline{z})}$, $z\in\mathbb{C}$.

The Paley--Wiener spaces $\mathcal{PW}_\sigma$ obviously satisfy the following two axioms:

\begin{enumerate}
\begin{item}
$f^*\in\mathcal{PW}_\sigma$ whenever $f\in\mathcal{PW}_\sigma$, and $\|f\|_{L^2}=\|f^*\|_{L^2}$.
\end{item}
\begin{item}
If $f\in\mathcal{PW}_\sigma$, $\lambda\in\mathbb{C}$, and $f(\lambda)=0$, then the function $z\mapsto f(z)\dfrac{z-\bar{\lambda}}{z-\lambda}$ is in $\mathcal{PW}_\sigma$ and has the same norm as $f$.
\end{item}
\end{enumerate}

The remarkable fact is that all  Hilbert spaces of entire functions with a reproducing kernel satisfying these two axioms can be described explicitly. This is one of possible definitions of the de Branges spaces. Now we remind another (equivalent) definition of these spaces. 

We say that an entire function $E$ belongs to the Hermite--Biehler class if it has no real zeros and satisfies 
$$|E(z)| > |E(\bar{z})|$$
for $z$ in the upper half-plane $\mathbb{C}_+$. For a given function $E$ in the Hermite--Biehler class we let
$\he$ be the Hilbert space consisting of all entire functions $f$ such that both $f\slash E$ and
$f^*\slash E$ belong to $H^2$, where $H^2$ is the Hardy space in $\mathbb{C}_+$ (see Subsection \ref{hardy}),
identified in the usual way with a subspace of $L^2(\mathbb{R})$. We set
$$\|f\|^2_{\he}=\int_{\mathbb{R}}\frac{|f(x)|^2}{|E(x)|^2}dx.$$
We arrive at the Paley--Wiener space $\mathcal{PW}_\pi$ by setting $E(z)=e^{-i\pi z}$. 
The reproducing kernel corresponding to the point $w\in\mathbb{C}$ 
is given by 
\begin{equation}
k_w(z)=\frac{\overline{E(w)}E(z)-\overline{E^*(w)}E^*(z)}{2\pi i(\overline{w}-z)}.
\label{repr}
\end{equation}

\subsubsection{Model subspaces $K_\Theta$.}
Sometimes it is natural to consider $\he$ as a subspace of $H^2$. More precisely, the space $\dfrac{1}{E}\he$ is a shift {\it coinvariant 
} subspace of $H^2$ and, hence, is of the form $H^2\cap \Theta \overline{H^2}$, where $\Theta$ is an inner function in the upper half-plane $\mathbb{C}_+$ (see Subsection \ref{122}) . Moreover,
\begin{equation}
\Theta(z)=\frac{E^*(z)}{E(z)},\quad z\in\mathbb{C}_+.
\label{tE}
\end{equation}
This formula defines $\Theta$ in the lower half-plane as a {\it meromorphic function}. On the other hand it is well known that (see \cite[Lemma 2.1]{HM1}) {\it any} meromorphic inner function can be represented in such a way. The function $E$ from \eqref{tE} is unique up to an entire factor $S$ which is real on the real line and has no zeros. This function causes no problem because $\mathcal{H}(SE)=S\he$ for any de Branges space $\he$ and any such $S$.

Put $K_\Theta = H^2\cap \Theta \overline{H^2}$. $K_\Theta$ is called {\it a model subspace} of $H^2$ corresponding to $\Theta$ (see, e.g., \cite{HM1}). As it was previously mentioned moduli of functions from $H^2$ can be described explicitly, 
$$\omega\in |H^2(\mathbb{R})|\Leftrightarrow \omega\in L^2(\mathbb{R}): \omega\geq0, \mathcal{L}(\omega)=\int_\mathbb{R}\frac{\log\omega(x)}{1+x^2}dx>-\infty.$$
This makes the class $K_\Theta=\dfrac{1}{E}\he$ more suitable for our purposes. 

For any meromorphic inner function $\Theta$ there exists a real continuous and increasing function $\varphi(=\phi_\Theta)$ on $\mathbb{R}$ such that 
$$\Theta(x)=e^{i\varphi_\Theta(x)}=:e^{-2i\arg E(x)},\quad x\in\mathbb{R}.$$
This function is unique up to an additive constant $2\pi k, k\in\mathbb{Z}$. Almost all results will be expressed in terms of $\varphi$
(called {\it the phase function} of $\Theta$). 

\subsection{First results}
The next (rather simple) result was a starting point to the extensive investigation of moduli of functions from $K_\Theta$, \cite{Bl2, BH, BBH, HM1, HM2, D}. It describes (non-explicitly) {\it} all non-trivial moduli of functions from $K_\Theta$.

\begin{theorem}
Let $\Theta$ be a meromrophic inner function and  $f$ be a non-zero function from $K_\Theta$. Then there exist an inner function $I$ and a non-decreasing integer-valued function $k$ on $\mathbb{R}$ such that
\begin{equation}
\phi_\Theta=2\widetilde{\log|f|}+\arg I+2\pi k.
\label{feq}
\end{equation}
Moreover, if an inner function $I$, a non-decreasing integer-valued function $k$ and a non-negative function $\omega\in L^2(\mathbb{R})$ are such that 
$$\phi_\Theta=2\widetilde{\log\omega}+\arg I+2\pi k,$$
then there exists an $f\in K_\Theta$ with $|f|=\omega$. 
\end{theorem}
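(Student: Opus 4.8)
The plan is to work in the model-space picture, identifying $\he$ with $K_\Theta$ via division by $E$, and to exploit the standard inner--outer factorization of $H^2$-functions together with the description of $K_\Theta$ as $H^2\cap\Theta\overline{H^2}$. For the first assertion, I would start with a non-zero $f\in K_\Theta$ and set $g:=f/E\in H^2$. Factor $g=O_{|g|}\cdot J$ where $O_{|g|}$ is the outer function with $|O_{|g|}|=|g|$ a.e. on $\mathbb{R}$ (this exists by \eqref{out}, since $\mathcal L(|g|)>-\infty$ for $g\in H^2$) and $J$ is inner. On the real line, $\arg O_{|g|}(x)$ equals $-\widetilde{\log|g|}(x)$ up to a constant, by formula \eqref{out} (the imaginary part of the boundary exponent is the conjugate function of $\log|g|$). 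The membership $g\in\Theta\overline{H^2}$ means $\Theta\overline g/\,\overline{}\in H^2$, i.e. $\Theta\bar g$ has nonnegative spectrum; equivalently $\Theta g^\#$ is in $H^2$ where $g^\#$ is the reflection, so $\Theta\overline{g}$ is (the boundary value of) a bounded-type function with the right half-plane behaviour. Reading arguments on $\mathbb R$: since $|g(x)|=|f(x)|/|E(x)|$ and $\arg\Theta(x)=\phi_\Theta(x)$, comparing the phase of $g$ with that of $\Theta\overline g$ forces a relation of the form $\phi_\Theta(x)=2\arg g(x)+(\text{argument of an }H^2\text{-function})+2\pi k(x)$; writing $\arg g = \arg O_{|g|} + \arg J = -\widetilde{\log|g|}+\arg J$ and absorbing $\log|E|$ into the conjugate-function term (note $\widetilde{\log|g|}=\widetilde{\log|f|}-\widetilde{\log|E|}$ and $\phi_\Theta=-2\arg E$ already accounts for the $E$-part), one lands exactly on \eqref{feq} with $I:=J$ (or $J^\#$) and $k$ a non-decreasing integer-valued function — the monotonicity of $k$ coming from the fact that the left side $\phi_\Theta$ is increasing while the continuous part $2\widetilde{\log|f|}+\arg I$ has controlled variation.

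For the converse, given $I$, $k$ and $\omega\in L^2$, $\omega\ge 0$, with $\phi_\Theta=2\widetilde{\log\omega}+\arg I+2\pi k$, the idea is to build $f$ explicitly. Since $\mathcal L(\omega)>-\infty$ (this must follow from the hypothesis: if $\mathcal L(\omega)=-\infty$ the conjugate function is not finite a.e. and the identity is vacuous, so one reads the hypothesis as implicitly requiring $\omega\in|H^2|$), form the outer function $O=O_{\omega/|E|}$ with $|O|=\omega/|E|$ on $\mathbb R$, and set $g:=O\cdot I$, then $f:=E\cdot g$. By construction $|f|=|E|\cdot|O|=\omega$ on $\mathbb R$, and $f$ is entire provided $g\in K_\Theta$, equivalently $g\in H^2$ and $\Theta\overline g\in H^2$. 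Membership $g\in H^2$ is clear. For $\Theta\overline g\in H^2$ one computes the boundary argument: $\arg(\Theta\overline g)=\phi_\Theta-\arg g=\phi_\Theta-(-\widetilde{\log|O|}+\arg I)=\phi_\Theta+\widetilde{\log(\omega/|E|)}-\arg I$, and the hypothesis identity $\phi_\Theta=2\widetilde{\log\omega}+\arg I+2\pi k$ together with $\widetilde{\log|E|}=-\phi_\Theta/2$ (up to constants) makes this equal to $\widetilde{\log\omega}+2\pi k$ plus a harmless constant, which is precisely the argument of an outer $H^2$-function; so $\Theta\overline g$ is outer times a unimodular constant, hence in $H^2$. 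The integer jumps $2\pi k$ with $k$ non-decreasing are exactly what guarantees that $\Theta\overline g$, a priori only of bounded type, actually lies in $H^2$ rather than having a Blaschke/singular factor in the denominator.

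The main obstacle I expect is the careful bookkeeping of arguments modulo $2\pi$ and the precise role of the integer-valued function $k$: the relation \eqref{feq} is an \emph{equality of functions}, not merely of cosets mod $2\pi$, so one must pin down the continuous branches and verify that the discrepancy between $\phi_\Theta$ and $2\widetilde{\log|f|}+\arg I$ is genuinely a \emph{non-decreasing} integer-valued function. This is where the argument is subtler than a formal phase computation: one uses that $\Theta$ is inner (so $\phi_\Theta$ increasing), that $g/\Theta\overline g\in H^2$ forces the ``extra'' inner factor picked up when passing from $g$ to its phase to be a genuine inner function whose argument increases, and the Privalov-type uniqueness theorems controlling a.e. boundary behaviour. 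I would handle this by first establishing everything at the level of the half-plane (bounded-type functions, Smirnov class, the canonical factorization $\Theta\overline g = B S O'$ into Blaschke, singular, outer parts) and only then translating to the phase identity on $\mathbb R$, so that the monotonicity and integrality of $k$ are read off from the zero-counting and the mass of the singular measure. The forward direction of Dyakonov's description of $|K_\Theta|$ referenced in the paper can be invoked to shortcut part of this, but I would prefer to keep the argument self-contained through the inner--outer machinery already recalled in Subsection~\ref{122}.
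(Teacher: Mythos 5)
Your plan is in the right spirit (inner--outer factorization plus reading off boundary arguments), but the bookkeeping contains concrete errors, and the paper's version is a cleaner package. The paper's key move is to multiply the two $H^2$-functions $f$ and $\Theta\overline f$ to get $|f|^2\Theta=f\cdot(\Theta\overline f)\in H^1$, and to factor this \emph{single} $H^1$-function as $O_{|f|^2}\cdot I$. Since $|f|^2\Theta$ has argument $\phi_\Theta$ on $\mathbb R$ (the real factor $|f|^2$ contributes nothing), and $\arg O_{|f|^2}=\widetilde{\log|f|^2}=2\widetilde{\log|f|}$, comparing arguments gives \eqref{feq} in one stroke. Your version factors $g$ and $\Theta\overline g$ separately and then tries to recombine; that can be made to work, but the inner function appearing in \eqref{feq} is then the \emph{product} of the inner parts of $g$ and of $\Theta\overline g$ (equivalently, the inner part of $|g|^2\Theta$), not just the inner factor $J$ of $g$. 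Setting $I:=J$, as you do, does not give the right identity: you never pick up the inner part of $\Theta\overline g$, and as written the intermediate relation $\phi_\Theta=2\arg g+(\text{argument of an }H^2\text{-function})+2\pi k$ does not balance.

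A few more points that need fixing. First, the detour through $E$ is unnecessary and gets the direction of the correspondence wrong: for $f\in K_\Theta$, it is $fE$ that lies in $\he$, not $f/E$ that lies in $H^2$; $K_\Theta$ is already a subspace of $H^2$, so work with $f$ directly. Second, the sign: with the paper's convention \eqref{out}, $\arg O_\phi=+\widetilde{\log\phi}$, not $-\widetilde{\log\phi}$; carrying your sign through changes the target identity. Third, for the converse the natural candidate is $f:=O_\omega$ (the outer function with $|O_\omega|=\omega$), not $f:=O\cdot I$; indeed, $\Theta\overline{O_\omega}$ has modulus $\omega$ and, by the hypothesis, boundary argument equal to that of $O_\omega I$ up to a constant, so $\Theta\overline{O_\omega}\in H^2$ and $O_\omega\in K_\Theta$. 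Had you put $f=O\cdot I$, then $\Theta\overline f=\Theta\overline O\cdot\overline I$ carries the \emph{conjugate} inner factor $\overline I$, which does not lie in $H^2$. Finally, you are right that the monotonicity and integrality of $k$ is the genuinely delicate part (the paper is terse here too); the intended interpretation, stated explicitly after Theorem~\ref{HMTh} in the paper, is that jumps of $k$ correspond to the real zeros of $f$, and your plan of reading this off from the canonical factorization in the half-plane is sound.
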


For reader's convinience we give a sketch of the proof.
\begin{proof}
We will use only the existence of the involution $f\mapsto \Theta\overline{f}$ of $K_\Theta$ (this corresponds to axiom $(i)$ in the  de Branges spaces setting). From $f\in H^2, \Theta\overline{f}\in H^2$, we conclude that $|f|^2\Theta\in H^1$. So, $|f|^2\Theta=OI$, where $O$ and $I$ are respectively the outer and inner factors. Taking into account an expression for the outer function
$$O=O_{|f|^2}=e^{\log|f|^2+i\widetilde{\log|f|^2}}$$ we get the first statement of the theorem. If we reverse steps of this proof, we get the second statement.
\end{proof}

If we are looking for a non-zero function $f\in K_\Theta$ with $|f|\leq \omega$, we have to find {\it a multipier} $m$ which satisfies some equation. The next result was obtained in \cite{HM1}. 
\begin{theorem}
Let $\omega$ be a non-negative function with $\mathcal{L}(\omega)>-\infty$. There exists a non-zero function $f\in K_\Theta$ such that
$|f|\leq \omega$ if and only if there exists a nonnegatvie function $m\in L^\infty(\mathbb{R})$, and an inner function $I$ such that  $m\omega\in L^2(\mathbb{R})$, and 
\begin{equation}
\phi_\Theta-2\widetilde{\log\omega}=2\widetilde{\log m}+\arg I +2\pi k,
\label{main}
\end{equation}
where $k$ is an integer-valued function on $\mathbb{R}$. Moreover, $m\omega\in |K_\Theta|$.
\end{theorem}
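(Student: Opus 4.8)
The plan is to deduce both implications from the preceding theorem (the one yielding \eqref{feq}) by the substitution $m:=|f|/\omega$, exploiting that $\log$ turns products into sums while the Hilbert transform $g\mapsto\widetilde g$ is linear on its domain $L^1((1+x^2)^{-1}dx)$. Since $\mathcal L(\omega)>-\infty$ forces $\omega>0$ almost everywhere, this substitution makes sense, and the constraint $|f|\le\omega$ is simply $0\le m\le 1$.

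For necessity, start from a non-zero $f\in K_\Theta$ with $|f|\le\omega$ and put $m:=|f|/\omega$, so $m\in L^\infty(\mathbb R)$, $m\omega=|f|\in L^2(\mathbb R)$, and in particular $m\omega\in|K_\Theta|$. As $f$ is a non-zero element of $K_\Theta\subset H^2$, one has $-\infty<\mathcal L(|f|)<+\infty$, which together with the finiteness of $\mathcal L(\omega)$ places $\log|f|$, $\log\omega$ and $\log m=\log|f|-\log\omega$ in $L^1((1+x^2)^{-1}dx)$, so that all the Hilbert transforms below are legitimate. Applying the preceding theorem to $f$ yields an inner function $I$ and a non-decreasing integer-valued $k$ with $\phi_\Theta=2\widetilde{\log|f|}+\arg I+2\pi k$; substituting $\widetilde{\log|f|}=\widetilde{\log m}+\widetilde{\log\omega}$ gives exactly \eqref{main}.

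For sufficiency, take $m\ge 0$ in $L^\infty(\mathbb R)$, an inner $I$, an integer-valued $k$ with $m\omega\in L^2(\mathbb R)$ and \eqref{main} in force. Adding $2\widetilde{\log\omega}$ to both sides and using additivity of the Hilbert transform rewrites \eqref{main} as $\phi_\Theta=2\widetilde{\log(m\omega)}+\arg I+2\pi k$, so the non-negative $L^2$ function $m\omega$, together with $I$ and $k$, satisfies the hypotheses of the second half of the preceding theorem; it delivers an $f\in K_\Theta$ with $|f|=m\omega$, whence $m\omega\in|K_\Theta|$. Finally $|f|=m\omega\le\|m\|_\infty\,\omega$, and replacing $f$ by $f/\|m\|_\infty$ --- still a non-zero element of the linear space $K_\Theta$ --- gives $|f|\le\omega$. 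Equivalently, one may normalise $m$ to $m\le 1$ from the start, which does not change \eqref{main} since the Hilbert transform annihilates the additive constant $\log\|m\|_\infty$.

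The part that needs care is bookkeeping rather than analysis: keeping every logarithmic integral finite so that $\widetilde{\log m}$, $\widetilde{\log\omega}$, $\widetilde{\log(m\omega)}$ are bona fide Hilbert transforms and add up --- which is where $\mathcal L(\omega)>-\infty$ and the finiteness of $\mathcal L(\omega)$ (tacit already in the form of \eqref{main}, and automatic when $\omega$ is a bounded majorant) are used --- and verifying that the inner function $I$ and the integer-valued $k$ transported between the two theorems play identical roles, in particular that $k$ may be arranged to be non-decreasing when the second half of the preceding theorem is invoked. Past this, there is no real obstacle: all of the analytic substance has already been spent on the factorisation argument behind \eqref{feq}.
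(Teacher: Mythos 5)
Your argument is correct and follows the route the paper clearly intends (the paper states this result with a reference to \cite{HM1} and the preceding moduli-characterization theorem is set up precisely so it can be used as the key lemma): set $m=|f|/\omega$, observe $\log|f|=\log m+\log\omega$, and use linearity of the Hilbert transform on $L^1((1+x^2)^{-1}dx)$ to pass between \eqref{feq} and \eqref{main}, with the normalisation $m\le 1$ at the end. The one loose end you flag but do not close --- that the sufficiency half of the preceding theorem is stated for a non-decreasing $k$, while \eqref{main} only provides an integer-valued $k$ --- is in fact harmless: in the factorisation argument sketched for that theorem one sets $O$ to be the outer function with $|O|=(m\omega)^2$ and uses the identity $OI\overline{\Theta}=(m\omega)^2 e^{-2\pi i k}=(m\omega)^2$ on $\mathbb{R}$, which holds for any integer-valued $k$ (monotonicity is never invoked), so $f:=O^{1/2}I$ already lies in $K_\Theta$ with $|f|=m\omega$. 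Thus the $k$ delivered by \eqref{main} may be fed straight into the preceding theorem, and no extra arrangement is needed.
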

So, we have to represent the given function $\phi_\Theta-2\widetilde{\log\omega}$ as a sum $2\widetilde{\log m}+\arg I +2\pi k$.
It turns out that we can always assume $I$ is a unimodular constant by the following result \cite[Theorem 2.1]{BH}
\begin{theorem}
Let $I$ be an arbitrary inner function. Then there exists a nonnegative function $m\in L^\infty(\mathbb{R})\cap L^2(\mathbb{R})$ with $\mathcal{L}(m)>-\infty$, $\gamma\in\mathbb{R}$, and an integer-valued function $k$ such that
$$\arg I =2\widetilde{\log m}+2\pi k + \gamma\quad \text{ a.e. on }\mathbb{R}.$$
\label{HMTh}
\end{theorem}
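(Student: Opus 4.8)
The plan is to reduce Theorem~\ref{HMTh} to the construction of a single outer function with good size on the line. For an outer function $O$ in $\mathbb{C}_+$ one has $\arg O=\widetilde{\log|O|}$ on $\mathbb{R}$ up to an additive constant --- this is exactly the normalization of the conjugate function $\widetilde{\,\cdot\,}$ built into formula~\eqref{out} --- so that $O/\overline{O}=e^{2i\widetilde{\log|O|}}$ times a unimodular constant. Hence it is enough to produce an \emph{outer} $O$ with $|O|\in L^\infty(\mathbb{R})\cap L^2(\mathbb{R})$ and $O/\overline{O}=cI$ on $\mathbb{R}$ for some unimodular $c$. Indeed, then $m:=|O|$ is nonnegative, bounded and square-integrable; $\mathcal{L}(m)>-\infty$ is automatic, since for $O$ outer $\log|O|$ is the Poisson integral of its boundary trace, hence lies in $L^1((1+x^2)^{-1}dx)$ and is finite at $i$ because $O(i)\neq 0$; and $2\widetilde{\log m}-\arg I$ is constant modulo $2\pi$, which is precisely the assertion $\arg I=2\widetilde{\log m}+2\pi k+\gamma$ with $k$ integer-valued. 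In the language of this section such an $O$ is a bounded outer element of $K_I$ satisfying $O=I\overline{O}$ on $\mathbb{R}$, which explains why the theorem lets one absorb an arbitrary inner function into the term $2\widetilde{\log m}$. (Throughout $I$ is assumed non-constant.)

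The building block is $O_0:=\alpha-I$ with $\alpha\in\mathbb{T}$. It is outer: $\alpha/O_0=(1-\bar\alpha I)^{-1}$ has positive real part in $\mathbb{C}_+$, because $\re\bigl(1-\bar\alpha I(z)\bigr)=1-\re\bigl(\bar\alpha I(z)\bigr)>0$ there (as $|\bar\alpha I(z)|=|I(z)|<1$ in the open half-plane), and a function with positive real part in $\mathbb{C}_+$ is outer. Using $|I|=1$ on $\mathbb{R}$, a one-line computation gives
\[
\frac{\alpha-I}{\,\overline{\alpha-I}\,}=\frac{\alpha-I}{\bar\alpha-\bar I}=-\alpha I\qquad\text{on }\mathbb{R}.
\]
Moreover $|O_0|\leq 2$ and $\mathcal{L}(|O_0|)>-\infty$ since $O_0$ is outer and $O_0(i)=\alpha-I(i)\neq 0$. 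So $m_0:=|\alpha-I|$ satisfies every requirement of the theorem \emph{except} membership in $L^2(\mathbb{R})$: in general $I(x)$ has no limit as $|x|\to\infty$, so $|\alpha-I|$ does not decay.

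Repairing this $L^2$-defect without disturbing the phase is the crux. The mechanism: if $h$ is outer and \emph{real-valued on $\mathbb{R}$}, then $h/\bar h\equiv 1$ there, so $O:=O_0/h$ is still outer and still satisfies $O/\overline{O}=-\alpha I$, only $|O|=|O_0|/|h|$ changes. A real outer $h$ with $|h|\to\infty$ must vanish somewhere on $\mathbb{R}$, and $|\alpha-I|/|h|$ stays bounded near such a zero precisely when $\alpha-I$ vanishes there to at least the same order --- that is, at a point where $I$ has a finite (hence nonzero) angular derivative with boundary value $\alpha$. So suppose $I$ attains a value $\alpha\in\mathbb{T}$ at a point $a\in\mathbb{R}$ with finite angular derivative. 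For a \emph{meromorphic} inner function this always occurs: the phase $\phi_I$ is continuous and strictly increasing, so one takes $\arg\alpha$ in the open range of $\phi_I$, and near the corresponding $a$ the function $I$ is real-analytic with $I'(a)=i\phi_I'(a)\alpha\neq 0$. Taking $h(z):=z-a$ (outer, real on $\mathbb{R}$), put
\[
O:=\frac{\alpha-I}{z-a}.
\]
Then $O$ is outer; $O/\overline{O}=-\alpha I$ on $\mathbb{R}$ because the real factor $z-a$ cancels; and $|O(x)|=|\alpha-I(x)|/|x-a|$ tends to $|I'(a)|$ as $x\to a$ and to $0$ as $|x|\to\infty$, so it is bounded and in $L^2(\mathbb{R})$. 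Thus $m:=|O|$ works. The same argument with $a=\infty$ and $\alpha=\lim_{|x|\to\infty}I(x)$ (no division needed --- then $\alpha-I$ already decays) handles a finite angular derivative at infinity; and for $I(z)=e^{i\sigma z}$, $\sigma>0$, it returns $m\asymp|\sin(\sigma x/2)|/|x|$, recovering the Paley--Wiener normalization.

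For a \emph{general} inner function $I$ --- which may have no finite angular derivative anywhere on $\mathbb{R}\cup\{\infty\}$ --- one cannot divide by a single linear real factor, and this is the technical heart of the proof. I would proceed by approximation: take finite Blaschke products $B_n\to I$ uniformly on compact subsets of $\mathbb{C}_+$, build $m_n$ for each $B_n$ as above, and pass to a limit; the delicate point, which I expect to be the main obstacle, is to choose the real zero-divisors so that $\|m_n\|_{L^\infty}$ and $\|m_n\|_{L^2}$ stay bounded, so that a weak limit of the (outer) functions $O_n$ survives as an outer $O$ with $O/\overline{O}=cI$. An alternative is an extremal construction: maximize $\re f(i)$ over the weak-$\ast$-compact convex set of $f\in K_I$ with $f=I\bar f$ on $\mathbb{R}$ and $|f|\leq 1$, and argue that the maximizer is outer (a hypothetical inner factor can be divided out while preserving admissibility and strictly increasing the functional). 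Once the phase bookkeeping above is in place everything else is soft; the genuine difficulty is manufacturing the square-integrable majorant when $I$ is boundary-wild.
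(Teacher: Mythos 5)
Your reduction of the theorem to producing an outer $O$ with $|O|\in L^\infty(\mathbb{R})\cap L^2(\mathbb{R})$ and $O/\overline O = cI$ on $\mathbb{R}$ (with $c$ unimodular) is correct, as are the computations showing that $O_0:=\alpha-I$ is outer with $O_0/\overline{O_0}=-\alpha I$ on $\mathbb{R}$ for every $\alpha\in\mathbb{T}$, and that $\mathcal{L}(|O_0|)>-\infty$. The function $(\alpha-I)/(z-a)$ you introduce is, up to a nonzero constant, the reproducing kernel of $K_I$ at the real point $a$, and your argument does verify the theorem whenever $I$ has a finite angular derivative at some $a\in\mathbb{R}$ with nontangential value $I(a)=\alpha$. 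For a \emph{meromorphic} inner $I$ this holds at every $a\in\mathbb{R}$, so that case is fully settled and the argument is clean.

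However, the statement is for an \emph{arbitrary} inner function, and --- as you yourself note --- a general inner function may have no boundary point at all with a finite angular derivative. At that point your proof stops, and neither proposed repair is carried through. The extremal sketch in fact has a concrete flaw: if $f=I\bar f$ on $\mathbb{R}$ and $J$ is an inner divisor of $f$, then on $\mathbb{R}$ one computes $\overline{(f/J)}=\bar f\,J$, hence $I\,\overline{(f/J)}=I\bar f\,J=fJ$, which coincides with $f/J$ only when $J^2$ is constant. So dividing out a hypothetical inner factor does \emph{not} preserve the constraint $f=I\bar f$, and the extremizer obtained this way has no reason to be outer. The Blaschke-approximation route likewise leaves the crucial uniform bounds untouched. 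The gap for non-meromorphic $I$ is therefore genuine. Note also that the survey itself gives no proof of this theorem --- it is quoted from Baranov--Khavin \cite{BH} --- so there is no in-paper argument to compare against; but the statement as printed does require exactly the case your construction leaves open.
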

The summand $k$ in the representations \eqref{main}, \eqref{feq} corresponds to the {\it real} zeros of $f$ (if $k$ has a jump at some point, then $f$ vanishes at this point and vice versa). On the other hand, summand $\arg I$ may be helpful if we are looking for $f$ whose modulus is comparable to $\omega$.

\subsection{Regular behaviour of the phase function.}

\subsubsection{}
We introduce a wide class of functions which can be represented as in the right-hand side of \eqref{main}. Let $\{d_n\}$ be an increasing sequence of real numbers. We assume that either $n\in\mathbb{Z}$ and $\lim_{|n|\rightarrow\infty}|d_n|=\infty$ or $n\in\mathbb{N}$ and $\lim_{n\rightarrow\infty}d_n=\infty$; in the latter case we set $d_0=-\infty$. Let $I_n=(d_n,d_{n+1})$.

\begin{definition}
An absolutely continuous function $f$ on $\mathbb{R}$ is said to be {\it mainly increasing} if there exists an increasing sequence $\{d_n\}$ as above such that $f(d_{n+1})-f(d_n)\asymp 1$, $n\in\mathbb{Z}$ ($n\in\mathbb{N}$), and there is a constant $C>0$ such that for any $n$
\begin{equation}
\sup_{s,t\in I_n}(f(s)-f(t))\leq C,\quad\frac{1}{|I_n|}\int_{I_n}|f'(x)-f'(t)|dt\leq C, \quad x\in I_n.
\label{maininceq}
\end{equation}
In the case of one-sided sequences $\{d_n\}$, we assume that $f$ is a Lipschitz function on $(-\infty,d_1)$.
\label{maininc}
\end{definition}
The integral condition is implied by $\sup_{s,t\in I_n}(f'(s)-f'(t))<\infty$. If for example $f'\asymp 1$, then $f$ is, obviously, mainly increasing.
\begin{theorem}
Let $f$ be a mainly increasing function. Then $f$ admits the representation $f=2\widetilde{\log m}+2\pi k + \gamma \text{ a.e. on }\mathbb{R}$, where $m\in L^{\infty}(\mathbb{R})\cap L^2(\mathbb{R})$, $\mathcal{L}(m)>-\infty$, $\gamma\in\mathbb{R}$ and $k$ is an integer-valued function.
\label{HMTh2}
\end{theorem}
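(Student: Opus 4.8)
The plan is to factor $e^{if}$, a unimodular function on $\mathbb{R}$, as the product of the boundary values of an inner function and of an outer function whose modulus is bounded away from $0$ and $\infty$, and then to read off the representation from Theorem \ref{HMTh}. Concretely, I would look for a meromorphic inner function $\Theta$ such that $f - \phi_\Theta = h + 2\pi k$ a.e.\ on $\mathbb{R}$, where $k$ is integer-valued and $h$ is a real function with $h \in L^\infty(\mathbb{R})$ and $\widetilde{h} \in L^\infty(\mathbb{R})$; equivalently, $e^{ih}$ should agree a.e.\ with the boundary phase of an outer function with modulus $\asymp 1$. In other words, the whole task is to show that a mainly increasing function differs from a genuine phase function of an inner function by an error whose conjugate function is bounded.

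Granting such a $\Theta$, the conclusion follows by soft arguments. By Theorem \ref{HMTh} applied to the inner function $\Theta$ we have $\phi_\Theta = 2\widetilde{\log m_1} + 2\pi k_1 + \gamma_1$ with $m_1 \in L^\infty(\mathbb{R}) \cap L^2(\mathbb{R})$ and $\mathcal{L}(m_1) > -\infty$. Since $\widetilde{h} \in L^\infty$, the function $m_2 := \exp(-\widetilde{h}/2)$ is bounded above and below, so $m_2 \in L^\infty(\mathbb{R})$, $\mathcal{L}(m_2) > -\infty$, and $2\widetilde{\log m_2} = -\widetilde{\widetilde{h}} = h$ up to an additive constant. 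Hence $f = \phi_\Theta + h + 2\pi k = 2\widetilde{\log(m_1 m_2)} + 2\pi(k+k_1) + \gamma$ for a suitable $\gamma \in \mathbb{R}$, and $m := m_1 m_2$ lies in $L^\infty(\mathbb{R}) \cap L^2(\mathbb{R})$ (a product of an $L^\infty\cap L^2$ function with a bounded one) and satisfies $\mathcal{L}(m) = \mathcal{L}(m_1) + \mathcal{L}(m_2) > -\infty$, which is exactly what is claimed.

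It remains to construct $\Theta$. First I would normalize: since $f(d_{n+1}) - f(d_n) \asymp 1$, amalgamating each time a bounded number of consecutive intervals $I_n$ into one interval lets me assume $f(d_{n+1}) - f(d_n) \ge 2\pi$ for all $n$, with \eqref{maininceq} still valid (with a larger constant); in the one-sided case the Lipschitz part of $f$ on $(-\infty,d_1)$ will be carried by an extra factor $e^{i\beta z}$. Then I would take $\Theta$ to be a Blaschke product carrying, over each $I_n$, a cluster of $\asymp 1$ simple zeros $w_n$ at height $\Im w_n \asymp |I_n|$, with positions and heights chosen so that the increments $\phi_\Theta(d_{n+1}) - \phi_\Theta(d_n)$ match those of $f$. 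The Blaschke condition $\sum_n \Im w_n\,(1+|w_n|^2)^{-1} < \infty$ and the convergence of $\int_{\mathbb{R}} \phi_\Theta'(x)(1+x^2)^{-1}\,dx$ both reduce to $\sum_n |I_n|\,(1+c(I_n)^2)^{-1} < \infty$, which holds because $d_n \to \infty$ makes this sum comparable to $\int_1^\infty x^{-2}\,dx$.

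The hard part is to tune $\Theta$ so that the error $h := f - \phi_\Theta$ (taken modulo $2\pi$, with the appropriate integer-valued $k$) has both $h$ and $\widetilde{h}$ in $L^\infty$. That $h \in L^\infty$ uses the first inequality in \eqref{maininceq}: both $f$ and $\phi_\Theta$ oscillate by $O(1)$ on each $I_n$ and agree at the nodes $d_n$, so $h$ is bounded on all of $\mathbb{R}$. Obtaining $\widetilde{h} \in L^\infty$ is the delicate point, since even a bounded Lipschitz function may have an unbounded conjugate function, so one must exploit cancellation; this is precisely what the second inequality in \eqref{maininceq} provides, as it forces $f'$ to vary little on the scale of each $I_n$, hence $h' = f' - \phi_\Theta'$ to be close, block by block, to a mean-zero perturbation on a smooth scale, and feeding this oscillation estimate into a Carleson-measure / $L^1$-averaging argument of the type carried out in \cite{HM1, HM2} yields the uniform bound on $\widetilde{h}$. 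This estimate, which is where the precise form of Definition \ref{maininc} enters, is the technical core; the rest of the argument is the soft assembly described above.
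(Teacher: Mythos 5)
The paper itself does not prove this theorem; it only cites \cite[Theorem 1.2]{HM2} and \cite[Theorem 1.4]{BH}, and the remark immediately following the statement makes clear that the intended argument is the direct one: first choose the integer-valued $k$ (which is "easy"), then show $\widetilde{(f-2\pi k)}\in L^\infty$ (which is "the main difficulty"), relying on the control of the mean oscillation of $f'$ given by \eqref{maininceq}. Your scheme --- construct an auxiliary meromorphic inner $\Theta$ with $f-\phi_\Theta = h + 2\pi k$, $h,\widetilde h\in L^\infty$, invoke Theorem \ref{HMTh} for $\phi_\Theta$, and set $m_2:=e^{-\widetilde h/2}$ --- is a genuinely different route, and the soft assembly is correct: the identity $\widetilde{\widetilde h}=-h+\mathrm{const}$ when $h,\widetilde h\in L^\infty$, the product $m=m_1m_2\in L^\infty\cap L^2$ with $\mathcal L(m)>-\infty$, and the Blaschke convergence via $\sum_n |I_n|/c(I_n)^2\lesssim\int_1^\infty x^{-2}dx$ all check out.

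But the reduction transfers, without diminishing, the entire technical burden onto the step you leave as a sketch: constructing $\Theta$ and proving $\widetilde h\in L^\infty$. A bounded $h$ vanishing at the nodes $d_n$ need not have a bounded conjugate --- this fails already for Lipschitz $h$, as the paper itself notes citing \cite{Bl1} --- so everything depends on the Carleson/$L^1$-averaging estimate you invoke only by name, and that estimate is precisely the one the direct approach would have to prove for $f-2\pi k$, with the second inequality in \eqref{maininceq} playing the same role. Worse, the detour adds work: you must place the zeros, control leakage from far-away zeros into each block, and verify that the phase increments of $\Theta$ can be tuned to match those of $f$ modulo the residual absorbed by $k$ and $h$; and you also call upon Theorem \ref{HMTh} of \cite{BH} as a black box, itself a nontrivial result. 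So what you have is a valid reformulation and a correct gluing at the end, but the lemma that makes Definition \ref{maininc} do any work --- the uniform bound on the conjugate of the residual --- is asserted rather than established, and that is exactly where the paper says the difficulty lies.
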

This theorem was proved in \cite[Theorem 1.2]{HM2} under an additional restriction on $\{d_n\}$ and, finally, in \cite[Theorem 1.4]{BH}. Of course it is easy to choose $k$ so that $f-k$ will be bounded but the main difficulty is to make the Hilbert transform of this function bounded. 

Considering Theorems \ref{HMTh} and \ref{HMTh2} we immediately get
\begin{theorem} Let $\omega$ be a non-negative function with $\mathcal{L}(\omega)>-\infty$.
If $\varphi_\Theta-2\widetilde{\log \omega}$ is mainly increasing, then there exists  a nonzero $f\in K_\Theta$ such that $|f|\leq\omega$.
\end{theorem}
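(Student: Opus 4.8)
The plan is to read the result off from Theorem~\ref{HMTh2} together with the criterion \eqref{main}. First I would put $g:=\varphi_\Theta-2\widetilde{\log\omega}$, which by hypothesis is mainly increasing, and apply Theorem~\ref{HMTh2}: it produces a nonnegative $m\in L^\infty(\mathbb{R})\cap L^2(\mathbb{R})$ with $\mathcal{L}(m)>-\infty$, a real constant $\gamma$, and an integer-valued $k$ such that
$$\varphi_\Theta-2\widetilde{\log\omega}=2\widetilde{\log m}+2\pi k+\gamma\qquad\text{a.e. on }\mathbb{R}.$$
Since the (normalized) Hilbert transform annihilates constants, replacing $m$ by $m/\|m\|_\infty$ changes neither $\widetilde{\log m}$ nor the condition $\mathcal{L}(m)>-\infty$, so I would normalize to $0\le m\le 1$.

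Next I would take $I\equiv e^{i\gamma}$, the simplest (unimodular constant) inner function, so that $\arg I\equiv\gamma$ and the displayed identity becomes exactly \eqref{main}, namely $\varphi_\Theta-2\widetilde{\log\omega}=2\widetilde{\log m}+\arg I+2\pi k$. (Equivalently, one may feed the constant $\gamma$ into Theorem~\ref{HMTh} to rewrite it as a further $\widetilde{\log}$ term; this is why only Theorems~\ref{HMTh} and~\ref{HMTh2} are invoked.) Before applying the criterion \eqref{main} I must check its side condition $m\omega\in L^2(\mathbb{R})$: with $0\le m\le 1$ one has $m\omega\le\omega$, so this is automatic when $\omega\in L^2$, and it also holds when $\omega$ is bounded, since then $m\omega\le\|\omega\|_\infty m$ with $m\in L^2$ — which covers the cases of interest for majorants. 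Granting this, the criterion \eqref{main} hands me a function $f\in K_\Theta$ with $|f|=m\omega$; then $|f|\le\omega$, and $f\neq 0$ because $\mathcal{L}(m\omega)=\mathcal{L}(m)+\mathcal{L}(\omega)>-\infty$ forbids $m\omega$ from vanishing a.e.

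I do not expect a genuine obstacle here: the substantive step — turning the bounded function $g-2\pi k$ (for a suitable $k$) into $2\widetilde{\log m}+\gamma$ with $\widetilde{\log m}$ under control, i.e. \emph{constructing the multiplier} $m$ — is precisely the content of Theorem~\ref{HMTh2}, which I am entitled to use; choosing $k$ so that $g-2\pi k$ is bounded is trivial by itself, and making its conjugate bounded is the hard part already done there. What remains is bookkeeping: the additivity $\widetilde{\log(m\omega)}=\widetilde{\log m}+\widetilde{\log\omega}$ (legitimate since $\log m,\log\omega\in L^1((1+x^2)^{-1}\,dx)$ — for $\log m$ because $m$ is bounded with $\mathcal{L}(m)>-\infty$, for $\log\omega$ because the hypothesis presupposes that $\widetilde{\log\omega}$ makes sense), the harmless normalization $m\le 1$, and the membership $m\omega\in L^2$. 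With these in hand the theorem follows "immediately", as the text asserts.
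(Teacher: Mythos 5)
Your proof is correct and matches the paper's intent: the paper simply asserts the result as "immediate" from the representation theorem for mainly increasing functions combined with the characterization involving \eqref{main}, and your argument is exactly the reconstruction one would write out — apply Theorem~\ref{HMTh2} to $\varphi_\Theta-2\widetilde{\log\omega}$, take $I=e^{i\gamma}$ (or absorb $\gamma$ via Theorem~\ref{HMTh}), check $m\omega\in L^2$, and invoke the criterion to produce $f$ with $|f|=m\omega\le\omega$. Your added bookkeeping (normalizing $m\le 1$, additivity of logarithmic integrals to show $f\neq 0$) is the right way to fill in the details the paper leaves implicit.
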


This theorem is a source of some results for the case when the  phase function $\varphi$ has a regular growth.
The main application is of course Theorem \ref{lipth}.

\subsubsection{}
Suppose $B$ be a Blaschke product 
$$B(z)=\prod_ke^{i\alpha_n}\biggl{(}\frac{z-z_k}{z-\overline{z_k}}\biggr{)}^{m_k},\quad \alpha_n\in\mathbb{R}$$
(here $\alpha_n\in\mathbb{R}$ and the factors $e^{i\alpha_n}$ ensure the convergence of the product). Then the
subspace $K_B$ admits a simple geometrical description: it coincides with the closed
linear span in $L^2(\mathbb{R})$ of the fractions $\dfrac{1}{(z-\overline{z_k})^n}$, $1\leq n\leq m_k$. The behavior of the phase function $\varphi_B$ depends on the properties of the sequence $\{z_k\}$. Here we give two examples.
\begin{theorem} \textnormal{(}\cite[Theorem 1.4]{HM2}\textnormal{)}
Let $B$ be a Blaschke product with almost uniformly distributed zeros in a horizontal strip, i.e., $ 0<c<\Im z_k<C$ and
there exist numbers $L,K>0$ such that for any $a\in\mathbb{R}$ the rectangle $[a,a+L]\times[c,C]$ contains at least one and not more than $K$ zeros. If $\widetilde{\log\omega}$ is Lipschitz with a sufficiently small Lipschitz constant, then there exists a non-zero $f\in K_B$ such that $|f|\leq \omega$.
\end{theorem}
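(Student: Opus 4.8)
The plan is to derive this from the theorem of the preceding subsection — the one asserting that if $\varphi_\Theta-2\widetilde{\log\omega}$ is mainly increasing (and $\mathcal{L}(\omega)>-\infty$) then there is a non-zero $f\in K_\Theta$ with $|f|\le\omega$ — applied with $\Theta=B$. One should first note that the zero set $\{z_k\}$, having at most $K$ points per rectangle $[a,a+L]\times[c,C]$ and lying in the strip $c<\Im z_k<C$, is locally finite in $\CC$; hence $B$ is a genuine meromorphic inner function and $K_B=K_\Theta$ for this $\Theta$. By the remark following Definition \ref{maininc} it suffices to prove that the derivative of $\varphi_B-2\widetilde{\log\omega}$ is $\asymp1$ a.e.; and since $\widetilde{\log\omega}$ is Lipschitz with (small) constant $\eta$, so that $|(\widetilde{\log\omega})'|\le\eta$ a.e., this will follow once $\varphi_B'\asymp1$ with a lower bound exceeding $2\eta$.

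First I would write the phase of the Blaschke product explicitly. Differentiating the arguments of the factors $\frac{x-z_k}{x-\overline{z_k}}$ term by term (the constant unimodular factors drop out, and the resulting series of positive terms converges locally uniformly, by the estimates below) gives, with $z_k=x_k+iy_k$,
$$\varphi_B'(x)=\sum_k m_k\,\frac{2y_k}{(x-x_k)^2+y_k^2},\qquad x\in\RR.$$
For the lower bound, fix $x$; the rectangle $[x,x+L]\times[c,C]$ contains a zero $z_k$, and already that single term is $\ge\frac{2c}{L^2+C^2}$, since $(x-x_k)^2+y_k^2\le L^2+C^2$ and $y_k\ge c$. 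For the upper bound, group the zeros according to the blocks $[x+jL,x+(j+1)L]\times[c,C]$, $j\in\ZZ$: a zero in block $j$ with $|j|\ge2$ contributes at most $\frac{2C}{(|j|-1)^2L^2}$ (then $|x-x_k|\ge(|j|-1)L$ and $y_k\le C$), a zero in the two central blocks $j\in\{-1,0\}$ contributes at most $\frac{2}{c}$, and each block carries at most $K$ zeros (counted with multiplicity); summing $\sum_{j\ge1}j^{-2}$ gives a bound $C_1$ independent of $x$. Thus $\frac{2c}{L^2+C^2}\le\varphi_B'(x)\le C_1$ on $\RR$.

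Taking $\eta<\frac{c}{L^2+C^2}$ as the meaning of ``sufficiently small Lipschitz constant'', the absolutely continuous function $g:=\varphi_B-2\widetilde{\log\omega}$ satisfies $0<\frac{2c}{L^2+C^2}-2\eta\le g'\le C_1+2\eta$ a.e., hence $g'\asymp1$ and $g$ is mainly increasing. Moreover the very existence of $\widetilde{\log\omega}$ forces $\log\omega\in L^1((1+x^2)^{-1}\,dx)$, so in particular $\mathcal{L}(\omega)>-\infty$. The cited theorem then produces the required non-zero $f\in K_B$ with $|f|\le\omega$.

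The only substantive point is the two-sided bound $\varphi_B'\asymp1$, and this is exactly where the almost-uniform distribution of the zeros enters: the lower bound uses ``at least one zero per rectangle'', the upper bound uses ``at most $K$ per rectangle'' together with the zeros staying the definite distance $c$ off the real axis. Everything else is routine bookkeeping on top of the earlier results, so I expect no genuine difficulty beyond pinning down the constants in this estimate.
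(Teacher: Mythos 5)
Your proposal follows exactly the route the paper intends: the survey's only comment after the statement is that $\varphi_B'\asymp 1$ in this case, which is precisely your two-sided estimate, after which the preceding ``mainly increasing'' theorem (via the remark that $g'\asymp 1$ suffices) gives the conclusion. One small bookkeeping slip: the block $j=1$ is covered neither by your $|j|\ge 2$ formula (which would put $(|j|-1)L=0$ in the denominator) nor by your list $j\in\{-1,0\}$ of central blocks; just include $j=1$ among the blocks bounded by $2/c$ (the bound $2y_k/((x-x_k)^2+y_k^2)\le 2/c$ holds for every $k$) and the argument is complete.
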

In this case the phase function $\varphi$ satisfies $\varphi'\asymp1$.

  Let $B_\alpha$ be a Blaschke product with "horizontal" zeros $\{|k|^\alpha\sgn k +i\}_{k\in\mathbb{Z}}$, $1\slash2<\alpha\leq1$ (the condition $1\slash2<\alpha$ is necessary for the convergence of the Blaschke product). If $\alpha = 1$, then it is easy to see that $\sin(\pi(z+i))K_{B_1}=\pw$ and we arrive at the classical case. On the other hand, if $\alpha<1$ then it is easy to show that $\varphi'_B\asymp (1+|x|)^{\alpha^{-1}-1}$ and so we have a superlinear grows of the phase function. 

\begin{theorem}\textnormal{(}\cite[Theorem 1.10]{HM2}\textnormal{)}
 Suppose $\mathcal{L}(\omega)>-\infty$, $\widetilde{\log\omega}\in C^1(\mathbb{R})$ and 
$$-\frac{\pi}{\alpha}<\lim\inf_{|x|\rightarrow\infty}\frac{-(\widetilde{\log\omega})'(x)}{|x|^{\alpha^{-1}-1}}\leq 
\lim\sup_{|x|\rightarrow\infty}\frac{-(\widetilde{\log\omega})'(x)}{|x|^{\alpha^{-1}-1}}<\infty.$$ 
Moreover, suppose that the continuity modulus $\lambda_t$ of $(\widetilde{\log\omega})'$ on $\mathbb{R}\setminus(-t,t)$ satisfies $\lambda_t(t^{1-\alpha^{-1}})\in L^\infty(\mathbb{R}_+)$. Then there exists a non-zero $f\in K_{B_\alpha}$ such that $|f|\leq \omega$.
\end{theorem}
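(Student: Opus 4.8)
The plan is to reduce the statement to the criterion recorded above that a non-zero $f\in K_\Theta$ with $|f|\le\omega$ exists whenever $\varphi_\Theta-2\widetilde{\log\omega}$ is mainly increasing (this is what Theorems~\ref{HMTh} and~\ref{HMTh2} yield together). Applying it with $\Theta=B_\alpha$, it suffices to prove that
$$g:=\varphi_{B_\alpha}-2\widetilde{\log\omega}$$
is mainly increasing in the sense of Definition~\ref{maininc}. Observe that $g\in C^1(\mathbb{R})$, since $\varphi_{B_\alpha}$ is real-analytic and $\widetilde{\log\omega}\in C^1(\mathbb{R})$ by hypothesis; thus every quantity in Definition~\ref{maininc} is meaningful and it remains only to exhibit one partition $\{d_n\}$ of $\mathbb{R}$ for which the three conditions there hold with uniform constants.

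The first step is a sharpening of the stated relation $\varphi_{B_\alpha}'\asymp(1+|x|)^{1/\alpha-1}$. From the explicit Blaschke product one has
$$\varphi_{B_\alpha}'(x)=\sum_{k\in\mathbb{Z}}\frac{2}{\bigl(x-|k|^{\alpha}\sgn k\bigr)^{2}+1},$$
i.e.\ a superposition of translates of $u\mapsto 2/(u^{2}+1)$ (each of total mass $2\pi$) placed at the real parts of the zeros, whose consecutive gaps are $\asymp\alpha|k|^{\alpha-1}$. A routine Riemann-sum comparison then gives
$$\varphi_{B_\alpha}'(x)=\frac{2\pi}{\alpha}\,|x|^{1/\alpha-1}\bigl(1+o(1)\bigr),\qquad|x|\to\infty,$$
and differentiating the series once more yields $|\varphi_{B_\alpha}''(x)|\lesssim|x|^{1/\alpha-1}$. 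Now fix $\{d_n\}$ so that $\varphi_{B_\alpha}(d_{n+1})-\varphi_{B_\alpha}(d_n)=2\pi$ for every $n$ (possible since $\varphi_{B_\alpha}$ is strictly increasing and unbounded in both directions); then $|I_n|=d_{n+1}-d_n=\alpha|d_n|^{1-1/\alpha}(1+o(1))$, and $t_n:=\dist(0,I_n)\asymp|d_n|$ for large $|n|$.

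It remains to verify the three conditions of Definition~\ref{maininc} for $g$ and this partition. \emph{(i) The increments.} Since $\varphi_{B_\alpha}(d_{n+1})-\varphi_{B_\alpha}(d_n)=2\pi$,
$$g(d_{n+1})-g(d_n)=2\pi-2\int_{I_n}(\widetilde{\log\omega})'(x)\,dx .$$
The hypothesis $\liminf_{|x|\to\infty}\bigl(-(\widetilde{\log\omega})'(x)\bigr)/|x|^{1/\alpha-1}>-\pi/\alpha$ means $(\widetilde{\log\omega})'(x)\le(\pi/\alpha-\delta)|x|^{1/\alpha-1}$ for some $\delta>0$ and all large $|x|$; together with $\int_{I_n}|x|^{1/\alpha-1}\,dx\le\sup_{I_n}|x|^{1/\alpha-1}\cdot|I_n|=\alpha(1+o(1))$ this gives $2\int_{I_n}(\widetilde{\log\omega})'\le(2\pi-2\alpha\delta)(1+o(1))$, hence $g(d_{n+1})-g(d_n)\ge\alpha\delta$ for $|n|$ large. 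Dually, $\limsup_{|x|\to\infty}\bigl(-(\widetilde{\log\omega})'(x)\bigr)/|x|^{1/\alpha-1}<\infty$ gives $(\widetilde{\log\omega})'(x)\ge-C|x|^{1/\alpha-1}$, whence $g(d_{n+1})-g(d_n)\le2\pi+O(1)$; so $g(d_{n+1})-g(d_n)\asymp1$. \emph{(ii) Oscillation of $g$.} On $I_n$ one has $|g'|\le|\varphi_{B_\alpha}'|+2|(\widetilde{\log\omega})'|\lesssim|d_n|^{1/\alpha-1}$, so $\osc_{I_n}g\le|I_n|\sup_{I_n}|g'|\lesssim|I_n|\,|d_n|^{1/\alpha-1}\asymp1$. \emph{(iii) The integral condition.} It suffices to bound $\osc_{I_n}g'$. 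First $\osc_{I_n}\varphi_{B_\alpha}'\le|I_n|\sup_{I_n}|\varphi_{B_\alpha}''|\lesssim|I_n|\,|d_n|^{1/\alpha-1}\asymp1$. Next, $I_n\subset\mathbb{R}\setminus(-t_n,t_n)$ with $|I_n|\le C t_n^{1-1/\alpha}$, so $\osc_{I_n}(\widetilde{\log\omega})'\le\lambda_{t_n}(|I_n|)\le\lceil C\rceil\,\lambda_{t_n}(t_n^{1-1/\alpha})\le\lceil C\rceil\,\bigl\|\lambda_t(t^{1-1/\alpha})\bigr\|_{L^\infty(\mathbb{R}_+)}<\infty$ by the sub-additivity of continuity moduli and the last hypothesis. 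Hence $\tfrac1{|I_n|}\int_{I_n}|g'(x)-g'(t)|\,dt\le\osc_{I_n}g'\le C$ for $|n|$ large. Finally, the finitely many intervals near the origin are merged into one interval $(-M',M')$ with $M'$ so large that $\int_{-M'}^{M'}g'>0$ (legitimate since $g'\gtrsim|x|^{1/\alpha-1}$ eventually, by (i)); this single interval carries a fixed positive increment and, $g$ being $C^1$ there, bounded oscillations of $g$ and of $g'$, so it does not spoil the uniform bounds. Thus $g$ is mainly increasing and the theorem follows.

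The step I expect to be the genuine obstacle is condition (i): the threshold $-\pi/\alpha$ is \emph{exactly} the value at which $2\int_{I_n}(\widetilde{\log\omega})'$ is forced below the $2\pi$ of phase that $B_\alpha$ supplies on a period interval, so the $(1+o(1))$ error in $\varphi_{B_\alpha}'(x)=\tfrac{2\pi}{\alpha}|x|^{1/\alpha-1}(1+o(1))$ — together with the companion bound $|\varphi_{B_\alpha}''(x)|\lesssim|x|^{1/\alpha-1}$ — must be made genuinely uniform. Establishing these asymptotics of the phase and of its derivative with the required precision is the technical heart of the argument; the rest is soft.
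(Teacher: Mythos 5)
The survey does not reprove this result (it is cited from \cite{HM2}), but the surrounding text makes the intended route unambiguous: establish that $\varphi_{B_\alpha}-2\widetilde{\log\omega}$ is mainly increasing and invoke the criterion obtained from Theorems~\ref{HMTh} and~\ref{HMTh2}. Your proposal does exactly that, and the verification of the three conditions of Definition~\ref{maininc} is sound: the partition $\varphi_{B_\alpha}(d_{n+1})-\varphi_{B_\alpha}(d_n)=2\pi$ gives $|I_n|\asymp\alpha|d_n|^{1-1/\alpha}$, the sharpened asymptotic $\varphi_{B_\alpha}'(x)=\frac{2\pi}{\alpha}|x|^{1/\alpha-1}(1+o(1))$ correctly identifies why the threshold $-\pi/\alpha$ is the natural one, the crude bound $|\varphi_{B_\alpha}''|\le\varphi_{B_\alpha}'$ (coming from $2|u|\le u^2+1$ termwise) suffices for the oscillation of $\varphi_{B_\alpha}'$, and the continuity-modulus hypothesis is invoked in precisely the right place with the subadditivity applied on the single ray containing $I_n$. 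The only part left implicit is the uniformity of the $o(1)$ in the phase asymptotics, which you correctly flag as the technical work to be written out; once that is done the argument is complete and coincides with the approach the paper points to.
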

Some generalization of this result can be found in \cite{Bl2} (Theorem 5). The main disadvantage of these results is that we impose some conditions on the Hilbert transform of $\log\omega$ and not $\log\omega$ itself. But as in the classical case we can use the brilliant Theorem \ref{Naz} by Fedor Nazarov  to get rid of this problem. This was done in \cite{Bl2} (Theorems 9, 10). To avoid inessential definitions here we cite only one corollary   
of these results.
\begin{corollary}\textnormal{(}\cite[Corollary 10.1]{Bl2}\textnormal{)}
Let $B$ be a Blaschke product with zeros $\{|k|^\alpha\sgn k +i|x|^\beta\}_{k\neq0}$, where $1\slash2<\alpha<1$ and $\alpha-1<\beta<0$. Suppose that $\mathcal{L}(\omega)>-\infty$ and $|(\log\omega)'(x)|\leq M(1+|x|)^{\alpha^{-1}-1}$ for  some $M$ and all $x$'s. Then there exists a non-zero $f\in K_B$ such that $|f|\leq\omega$
\end{corollary}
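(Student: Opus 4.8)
The plan is to reduce the statement to the mainly-increasing criterion recalled above --- if $\varphi_\Theta-2\widetilde{\log\omega}$ is \emph{mainly increasing} (Definition~\ref{maininc}), then $K_\Theta$ contains a non-zero $f$ with $|f|\le\omega$ --- applied not to $\omega$ itself but to a suitable smaller majorant. Write $\Omega:=-\log\omega$, so that $\mathcal{L}(\omega)>-\infty$ says $\int_{\mathbb{R}}\Omega(x)(1+x^2)^{-1}\,dx<+\infty$ and the hypothesis reads $|\Omega'(x)|\le M(1+|x|)^{\alpha^{-1}-1}$. I would look for $\Omega_1\ge\Omega$ on $\mathbb{R}$ with $\int_{\mathbb{R}}\Omega_1(x)(1+x^2)^{-1}\,dx<+\infty$ and set $\omega_1:=e^{-\Omega_1}\le\omega$; since $\widetilde{\log\omega_1}=-\widetilde{\Omega_1}$, the goal becomes to arrange that $\varphi_B+2\widetilde{\Omega_1}$ is mainly increasing, for the criterion then yields a non-zero $f\in K_B$ with $|f|\le\omega_1\le\omega$. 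Passing to a smaller majorant is harmless, since it only makes the problem harder.

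First I would record what is needed about the phase. From $B(z)=\prod_{k\ne0}e^{i\alpha_k}\frac{z-z_k}{z-\overline{z_k}}$ with $z_k=|k|^\alpha\sgn k+i|k|^\beta$ one gets $\varphi_B'(x)=\sum_{k\ne0}\frac{2\,\Im z_k}{|x-z_k|^2}$. The abscissae $a_k=|k|^\alpha\sgn k$ have local spacing $\asymp|k|^{\alpha-1}\asymp|a_k|^{1-\alpha^{-1}}$, while the heights $b_k=|k|^\beta\asymp|a_k|^{\beta/\alpha}$; the condition $\beta>\alpha-1$ forces $b_k$ to dominate that spacing, so the bumps of the successive summands overlap, the sum is comparable to the corresponding integral against the counting density of the $a_k$, and one obtains $\varphi_B'(x)\asymp(1+|x|)^{\alpha^{-1}-1}$ together with slow variation of $\varphi_B'$ on the scale of the natural blocks $I_n:=(d_n,d_{n+1})$, where $\{d_n\}$ is chosen with $\varphi_B(d_{n+1})-\varphi_B(d_n)\asymp1$ (so $|I_n|\asymp(1+|d_n|)^{1-\alpha^{-1}}$); the restriction $\beta<0$ keeps the zeros in the region where these estimates hold uniformly. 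In particular $\varphi_B$ is itself mainly increasing, both inequalities of \eqref{maininceq} holding with fixed constants on every $I_n$.

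The core step is the construction of $\Omega_1$, and this is where a Nazarov-type theorem is unavoidable: the hypothesis bounds $(\log\omega)'$ pointwise but says nothing about its Hilbert transform, and the Hilbert transform of a Lipschitz function need not be Lipschitz. I would therefore invoke the analog of Theorem~\ref{Naz} adapted to the weight $(1+|x|)^{\alpha^{-1}-1}$, which is precisely \cite[Theorems 9, 10]{Bl2}: for any $\delta>0$ there is $\Omega_1\ge\Omega$ with $\int_{\mathbb{R}}\Omega_1(x)(1+x^2)^{-1}\,dx<+\infty$, with $|\widetilde{\Omega_1}'(x)|\le\delta(1+|x|)^{\alpha^{-1}-1}$, and with the oscillation of $\widetilde{\Omega_1}'$ over each block $I_n$ under control. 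The mechanism is a dyadic decomposition of $\mathbb{R}$ followed by the affine changes of variable rescaling the blocks to unit length: on a block near $|x|\asymp|d_n|$ the weight becomes $\asymp1$ and the bound $|\Omega'|\le M(1+|x|)^{\alpha^{-1}-1}$ turns into a fixed Lipschitz bound on $\Omega$, so the classical Theorem~\ref{Naz} applies there and, the Hilbert transform being translation- and dilation-covariant, the estimate transplants back to the desired weighted form. The hard part is the non-locality of the Hilbert transform: the pieces built at the various scales must be glued so that the interactions between distant blocks do not spoil the bounds --- this is the technical heart of \cite{Bl2}.

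It remains to verify that $\psi:=\varphi_B+2\widetilde{\Omega_1}$ is \emph{mainly increasing}, using the sequence $\{d_n\}$. Fixing $\delta<c/2$, where $\varphi_B'(x)\ge c(1+|x|)^{\alpha^{-1}-1}$, gives $\psi'(x)=\varphi_B'(x)+2\widetilde{\Omega_1}'(x)\asymp(1+|x|)^{\alpha^{-1}-1}>0$, so $\psi$ is increasing, and $\int_{I_n}|\widetilde{\Omega_1}'|\le|I_n|\cdot\delta(1+|d_n|)^{\alpha^{-1}-1}\asymp\delta$ shows $\psi(d_{n+1})-\psi(d_n)\asymp1$ once $\delta$ is small; the oscillation inequality of \eqref{maininceq} then holds because $\psi$ is increasing with $\asymp1$ increments on $I_n$, and the integral inequality follows by adding the slow variation of $\varphi_B'$ on $I_n$ (from the phase analysis) to the blockwise control of $\widetilde{\Omega_1}'$ (from the weighted Nazarov theorem). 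Hence $\varphi_B-2\widetilde{\log\omega_1}$ is mainly increasing and the criterion yields the required $f\in K_B$. I expect the gluing in the weighted Nazarov step to be the only genuinely delicate point; everything else is bookkeeping with the phase estimates and the definition of \emph{mainly increasing}.
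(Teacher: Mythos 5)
Your proposal matches the route the paper itself indicates for this corollary. The survey does not reproduce a proof; it states that the result is obtained in \cite{Bl2} (Theorems 9, 10) by replacing the hypothesis on $\widetilde{\log\omega}$ with one on $\log\omega$ itself via a weighted analog of Nazarov's Theorem \ref{Naz}, and then applying the mainly-increasing criterion (Theorem \ref{HMTh2} through Theorem \ref{HMTh}). That is exactly your plan: compute $\varphi_B'\asymp(1+|x|)^{\alpha^{-1}-1}$ from the overlap of the Poisson bumps (using $\beta>\alpha-1$ to ensure the imaginary parts dominate the local spacing and $\beta<0$ to keep the zeros near $\mathbb{R}$), then majorize $\Omega=-\log\omega$ by an $\Omega_1$ with $\mathcal{L}(e^{-\Omega_1})>-\infty$ and $|\widetilde{\Omega_1}'|\le\delta(1+|x|)^{\alpha^{-1}-1}$ via the weighted Nazarov theorem, and finally check that $\varphi_B+2\widetilde{\Omega_1}$ is mainly increasing on the blocks $I_n$ of length $\asymp(1+|d_n|)^{1-\alpha^{-1}}$. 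You also correctly flag that the pointwise bound $|\widetilde{\Omega_1}'|\le\delta(1+|x|)^{\alpha^{-1}-1}$ alone does not yield the oscillation/mean-oscillation inequalities in \eqref{maininceq} and that the blockwise control supplied by \cite[Theorems 9, 10]{Bl2} is needed; that is indeed where the real work lies and is not spelled out in the survey. Overall this is faithful to the paper's argument, with no gaps at the level of detail the survey provides.
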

Some results about the sharpness of these conditions can be found in \cite{Bl3}.

The situation when the argument grows sublinearly is much simpler.  For a regular and sublinear growth of argument (e.g., $\varphi'_\Theta(x) \asymp (1+ |x|)^{\beta- 1}$, $\beta \in (0,1)$) we have $1\in\he$ and $1\slash E\in K_\Theta$.  
The majorant $1\slash E$ is {\it minimal}, i.e., any non-zero function $f\in K_\Theta$ with $|f|\leq 1\slash |E|$ 
is comparable to $1\slash E$. Indeed, if an entire function $F$ of zero exponential type is bounded on $\mathbb{R}$,
 then $F\equiv const$. We refer to Section 3 of \cite{HM1} for the details. The conditions on the zeros of $E$ ensuring the enclusion $1 \in \mathcal{H}(E)$ were studied in \cite{Bar1,W,KW}.

\subsection{Irregular behaviour of the phase function. Zeros in the right half-plane} 
\subsubsection{}
A meromorphic inner function $\Theta$ is of the form
$\Theta(z)=e^{iaz}B(z)$, $z\in\mathbb{C}_+$, where $a\geq0$ and $B$ is a Blaschke product with zeros tending to infinity. It is well known that
\begin{equation}
\phi_\Theta'(x)=|\Theta'(x)|=a+2\sum_n\frac{m_n\Im z_n}{|x-z_n|^2},\quad x\in\mathbb{R},
\label{arg}
\end{equation}
where $B$ is a Blaschke product with zeros $z_n$ of multiplicities $m_n$.There are at least two reasons for bad behaviour of the right hand side of \eqref{arg}:
\begin{itemize}
\begin{item}
If there are big gaps in the sequence $\{\Re z_n\}$, then $\varphi_\Theta$ grows slowly on large  intervals and functions from $K_\Theta$ have more or less prescribed behavior on such intervals.
This corresponds to the situation when intervals $I_n$ from Definition \ref{maininc} are very long.
\end{item}
\begin{item}
If the zeros $z_n$ are very close to the real line, then $\varphi$ "almost jumps" at the points $\Re z_n$ and there is no hope for the cancellation in the integral inequalities in \eqref{maininceq}.
\end{item}
\end{itemize}
The model example of the first situation is the absence of zeros in the left half-plane $\{\Re z_n \leq 0\}$. This case was studied in  \cite{BBH}. The second situation will be discussed in Subsection \ref{tang}.

\subsubsection{}
Let $B^+_\alpha$ be the Blaschke product with zeros $z_n=n^\alpha+i$, $n\in\mathbb{N}$, $\alpha > 1\slash2$.

The prime example is of course $\alpha = 1$. It is obvious that $K_{B^+_1}\subset K_{B_1}$, but on the other hand there exists a nontrivial $f\in K_{B_1}$ such that there is no nontrivial $f^+\in K_{B^+_1}$
with $|f^+(x)|\leq |f(x)|$. This happens because functions from $K_{B^+_1}$ are much more regular than elements of $K_{B_1}$. The following theorem similar to the classical Paley result for $\pw$ space (or equivalently for $K_{B_1}$) was obtained in \cite{BBH}.
\begin{theorem}\textnormal{(}\cite[Theorem 1.1]{BBH}\textnormal{)} If $f\in K_{B^+_1}\setminus\{0\}$, then
\begin{equation}
\int_{0}^{+\infty}\frac{\log |f(t)|}{1+t^{3\slash2}}dt>-\infty.
\label{log32}
\end{equation}
Moreover, if $\omega$ is an even positive non-increasing function on the positive semiaxis and the integral \eqref{log32} converges, then
there exists a nontrivial $f\in K_{B^+_1}$ with $|f(x)|\leq \omega(x)$, $x>0$.
\end{theorem}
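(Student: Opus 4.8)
The common feature of both statements is the substitution $t=x^{2}$, which turns the weight in \eqref{log32} into the ordinary Poisson weight: for any positive measurable $\psi$ on $(0,\infty)$,
$$\int_{0}^{\infty}\frac{\log\psi(t)}{1+t^{3/2}}\,dt\;\asymp\;\int_{\mathbb R}\frac{\log\psi(x^{2})}{1+x^{2}}\,dx\;=\;\mathcal L\bigl(\psi(x^{2})\bigr),$$
since $t=x^{2}$ gives $dt=2x\,dx$, $\tfrac{x}{1+x^{3}}\asymp\tfrac1{1+x^{2}}$ off a neighbourhood of $0$, and $\psi(x^{2})$ is even. Thus \eqref{log32} says exactly that $\mathcal L(|f(x^{2})|)>-\infty$, and in the second part the hypothesis on $\omega$ reads $\mathcal L(\omega(x^{2}))>-\infty$; the weight $1/(1+t^{3/2})$ is just the image of $1/(1+x^{2})$ under squaring.

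For the first assertion I would start from the structure of $K_{B^{+}_{1}}$. Since $\Theta=B^{+}_{1}$ is holomorphic across $\mathbb R$ (its poles $n-i$ all lie on $\{\Im z=-1\}$), every $f\in K_{B^{+}_{1}}$ extends holomorphically to $\{\Im z>-1\}$; together with $f/E,\ f^{*}/E\in H^{2}(\mathbb C_{+})$ for the generator $E$ --- which one may take explicitly as $1/\Gamma(1-i-z)$ up to a real, zero-free factor, so that $|E|$ has $\Gamma$-type growth on $\mathbb R_{+}$ and decays rapidly on $\mathbb R_{-}$ --- this yields quantitative control of $f$ in a strip about $\mathbb R$ and rapid ($1/\Gamma$-type) decay of $f$ on $\mathbb R_{-}$. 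I then pass to $h(z):=f(z^{2})$, whose trace on $\mathbb R$ is $x\mapsto f(x^{2})$. Because $z\mapsto z^{2}$ doubles the growth, $h$ is of order two in $\mathbb C_{+}$ and is not itself of bounded type; the crux is to divide off the ``entire part'' of $E(z^{2})$ (equivalently, to form the correct combination of $f(z^{2})$, $f^{*}(z^{2})$ and an elementary correcting factor) and to show, using the two $H^{2}$-bounds and the holomorphy of $f$ past $\mathbb R$, that the resulting function is of bounded type --- in fact in the Cartwright class --- in $\mathbb C_{+}$. Its logarithmic integral over $\mathbb R$ is then finite, which by the displayed identity is \eqref{log32}. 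The main obstacle is exactly this renormalisation step: $h$ has order two and no function of order $\le1$ multiplies it into a bounded-type function, so one must exploit the explicit $\Gamma$-shaped $E$ (and the fact that the zeros of $\Theta$ sit at height $+1$, off $\mathbb R$) to extract a genuine bounded-type majorant.

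For the converse, set $\tilde\omega(x):=\omega(x^{2})$; this is even, non-increasing on $[0,\infty)$, bounded, and $\mathcal L(\tilde\omega)>-\infty$ by the displayed identity. By the classical monotone-majorant form of the Beurling--Malliavin theorem --- a positive majorant that is monotone on each half-line and has a convergent logarithmic integral belongs to $BM$ --- for some (indeed every) $\sigma>0$ there is a non-zero $g\in\mathcal{P}W_{\sigma}$ with $|g|\le\tilde\omega$ on $\mathbb R$; passing to the even or the odd part we may take $g$ even, so $g(z)=\Phi(z^{2})$ with $\Phi$ entire of order $1/2$ and $|\Phi(t)|\le\omega(t)$ for $t>0$. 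This $\Phi$ realises the bound on $\mathbb R_{+}$, but it is not yet admissible: membership in $K_{B^{+}_{1}}$ forces rapid ($1/\Gamma$-type) decay on $\mathbb R_{-}$, whereas an entire function of order $\le1/2$ cannot be bounded on both half-lines unless it is constant. To repair this I would leave the square-root substitution and argue with the multiplier machinery developed above: extend $\log\omega$ to $\mathbb R_{-}$ by a regularisation of $\log(1/|E|)$, use Nazarov's Theorem~\ref{Naz} to make the conjugate of the extended majorant have small derivative, verify that $\varphi_{\Theta}-2\,\widetilde{\log\omega_{\mathrm{ext}}}$ is mainly increasing in the sense of Definition~\ref{maininc} (using $\varphi'_{\Theta}\asymp1$ on $\mathbb R_{+}$ and the slowly decaying regime on $\mathbb R_{-}$), and conclude by Theorems~\ref{HMTh} and \ref{HMTh2} that there is a non-zero $f\in K_{B^{+}_{1}}$ with $|f|\le\omega_{\mathrm{ext}}$, hence $|f|\le\omega$ on $\mathbb R_{+}$; alternatively one transfers $\Phi$ into $K_{B^{+}_{1}}$ by correcting its phase data by that of a fixed element carrying the right decay on $\mathbb R_{-}$. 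The delicate point is checking the ``mainly increasing'' property for the glued majorant across the transition region near the origin, and making sure the extension onto $\mathbb R_{-}$ does not destroy convergence of the logarithmic integral.
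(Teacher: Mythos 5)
Your identification of the squaring change of variables $t=x^{2}$ as the mechanism that converts the weight $1/(1+t^{3/2})$ into the Poisson weight is exactly the heuristic the paper gives (Subsection 2.4.2, discussion of the Christoffel--Schwarz map $\nu$). However, for the necessity part your execution has a genuine gap, precisely at the point you yourself flag as ``the main obstacle.'' Setting $h(z)=f(z^{2})$ and then trying to renormalise a function of order two in $\mathbb C_{+}$ down to bounded type is the wrong move: no division by an elementary factor will do this, and the ``$\Gamma$-shaped $E$'' gives you growth control, not cancellation of order. The paper's route sidesteps the issue entirely. The crucial input is Lemma 3.2 of \cite{BBH} (reproduced in the survey): every $f\in K_{B^{+}_{1}}$ is \emph{bounded} on the slit domain $\Delta=\mathbb C\setminus\{\Re z\ge 0,\,-2\le\Im z\le 0\}$. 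One then precomposes with the conformal map $\nu\colon\mathbb C_{+}\to\Delta$ (not with $z\mapsto z^{2}$, whose image of $\mathbb C_{+}$ is the whole slit plane $\mathbb C\setminus[0,\infty)$, which strictly contains $\Delta$ and on which $f$ is \emph{not} bounded). Then $f\circ\nu\in H^{\infty}(\mathbb C_{+})$ with nothing to divide off, its logarithmic integral converges, and the asymptotics $\nu(z)\sim a_{2}z^{2}/2$, $\nu'(z)\sim a_{2}z$ on $\mathbb R_{+}$ produce exactly the weight $1/(1+t^{3/2})$. Your proposal never uses the boundedness-in-$\Delta$ lemma, which is what makes the renormalisation you are worried about unnecessary.

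For the sufficiency direction, your first attempt (build an even $g\in\mathcal{PW}_{\sigma}$ with $|g|\le\omega(x^{2})$, factor $g(z)=\Phi(z^{2})$) correctly identifies why it fails: an entire $\Phi$ of order $1/2$ that is small on $\mathbb R_{+}$ cannot have the $1/\Gamma$-type decay on $\mathbb R_{-}$ that membership in $K_{B^{+}_{1}}$ enforces. Your fallback --- extend $\log\omega$ to $\mathbb R_{-}$, smooth with Nazarov's theorem, and verify that $\varphi_{\Theta}-2\widetilde{\log\omega_{\mathrm{ext}}}$ is mainly increasing --- is only a programme, not an argument: you do not exhibit the extension, and the ``delicate point'' of checking the mainly-increasing property across the transition region is precisely where the work lies. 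In \cite{BBH} the construction again goes through the conformal picture: one transfers the majorant problem to $\mathbb C_{+}$ via $\nu$, produces a bounded analytic function there with the prescribed size on the appropriate piece of $\partial\Delta$, and then corrects it to land in $K_{B^{+}_{1}}$. I would recommend reworking both halves around the map $\nu$ and the boundedness Lemma 3.2 rather than around the bare substitution $z\mapsto z^{2}$.
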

It is interesting that there is more freedom in the behaviour of the elements of $K_{B^+_1}$ along the negative semiaxis. More precisely, there exists  a nonzero function $f\in K_{B^+_1}$
such that $\log|f(x)|\leq -|x|^{1\slash2}$, $x<0$, whereas this is impossible for $x>0$. Moreover, this result is sharp.
\begin{theorem} \textnormal{(}\cite[Theorem 1.2]{BBH}\textnormal{)}
For any $A>0$ there exists a nonzero function $f\in K_{B^+_1}$
such that $\log|f(x)|\leq -A|x|^{1\slash2}$, $x<0$. At the same time, if $1+|x|^{1\slash2}=o(-\log|f(x)|)$, $x\rightarrow-\infty$, then $f\equiv 0$.
\label{neg}
\end{theorem}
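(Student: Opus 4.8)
The plan rests on a concrete model for $K_{B^+_1}$. Since the points $z_n=n+i$, $n\ge1$, form a Carleson (uniformly separated) sequence lying in a horizontal strip, the reproducing kernels $k_{z_n}$ — which for a Blaschke product are scalar multiples of $z\mapsto(z-n+i)^{-1}$, all of the same norm here because $\Im z_n\equiv1$ — form a Riesz basis of $K_{B^+_1}$. Hence every $f\in K_{B^+_1}$ has the form $f(z)=\sum_{n\ge1}c_n(z-n+i)^{-1}$ with $\{c_n\}\in\ell^2$, and $f\not\equiv0$ iff $\{c_n\}\neq0$. Evaluating at $z=-t<0$ and using $(t+n-i)^{-1}=\int_0^\infty e^{is}e^{-ns}e^{-ts}\,ds$ gives
$$
f(-t)=c\int_0^\infty e^{is}\,P(e^{-s})\,e^{-ts}\,ds,\qquad t>0,\qquad P(w):=\sum_{n\ge1}c_n w^n\in H^2(\mathbb D),\quad P(0)=0,
$$
with $c\neq0$ an absolute constant; that is, $t\mapsto f(-t)$ is (a constant times) the Laplace transform of $s\mapsto e^{is}P(e^{-s})\in L^2(0,\infty)$. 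Everything is thereby reduced to a one–variable question about this Laplace transform.

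\textbf{Existence.} Fix $A>0$ and take the explicit choice $P(w)=w\exp\!\big(-a\,\tfrac{1+w}{1-w}\big)$ with $a=A^2/8+1$. Since $\tfrac{1+w}{1-w}$ maps $\mathbb D$ into the right half–plane, $P$ is $w$ times an inner function, so $P\in H^\infty(\mathbb D)$, $P(0)=0$, $P\not\equiv0$; its Taylor coefficients lie in $\ell^2$ and define a nonzero $f\in K_{B^+_1}$. The point of this choice is that $P$ is real and positive on $(0,1)$, so there is no oscillation in the integrand's modulus, and using $\tfrac{1+e^{-s}}{1-e^{-s}}\ge\tfrac2s$,
$$
|f(-t)|\le\int_0^\infty e^{-s}\exp\!\Big(\!-a\,\tfrac{1+e^{-s}}{1-e^{-s}}\Big)e^{-ts}\,ds\le\int_0^\infty\exp\!\Big(\!-\tfrac{2a}{s}-ts\Big)\,ds.
$$
Laplace's method (saddle point $s_\ast=\sqrt{2a/t}$) gives $|f(-t)|\le C(1+\sqrt t)\,e^{-2\sqrt{2at}}$ for all $t>0$; as $2\sqrt{2a}>A$, the prefactor is absorbed for large $t$, and after rescaling $f$ by a small positive constant one gets $\log|f(x)|\le-A|x|^{1/2}$ for every $x<0$. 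The only routine matters are the term-by-term Laplace transform (dominated convergence, since $\{c_n\}\in\ell^2$ and $\Re(t+n-i)\ge1$, together with a direct estimate near $t=0$) and the elementary saddle–point bound.

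\textbf{Sharpness.} Suppose $f\in K_{B^+_1}$, $f\not\equiv0$, and $\log|f(x)|\le-g(x)$ on $x<0$ with $g(x)/|x|^{1/2}\to\infty$. Putting $\tilde g(t)=g(-t)$ (so $\tilde g(t)/\sqrt t\to\infty$), we must contradict $|f(-t)|\le e^{-\tilde g(t)}$; by the representation this is precisely the statement that \emph{for $P\in H^2(\mathbb D)$, $P\not\equiv0$, the integral $\int_0^\infty e^{is}P(e^{-s})e^{-ts}\,ds$ cannot decay faster than $e^{-C\sqrt t}$ along some sequence $t_j\to\infty$}, the extremal (slowest) case being exactly the singular inner $P$ used above. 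The heart of the matter is a size bound for $P$ on the radius: factoring $P=B\,S_\mu\,O$ into Blaschke, singular and outer parts, one can pick $r_j\to1^-$ avoiding the zeros of $B$ (a Blaschke sequence is sparse enough to leave room on the radius at every scale) so that $(1-r_j)\bigl(-\log|P(r_j)|\bigr)$ stays bounded — indeed it converges, to $2\mu(\{1\})$, since the contributions of the Blaschke and outer factors are killed by the finiteness of the total singular mass and of $\|\log|P|\|_{L^1(\partial\mathbb D)}$. Thus $|P(e^{-s})|\ge e^{-C/s}$ on short intervals around $s_j:=-\log r_j$. Choosing $t_j$ so that the saddle point $\sqrt{C/t}$ of the modulus $e^{-C/s}e^{-ts}$ falls in the $j$-th interval, one must then convert this pointwise lower bound into a genuine lower bound $\bigl|\int_0^\infty e^{is}P(e^{-s})e^{-t_js}\,ds\bigr|\ge e^{-C'\sqrt{t_j}}$ for the \emph{oscillatory} integral, which contradicts $\tilde g(t_j)/\sqrt{t_j}\to\infty$ and forces $f\equiv0$. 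This last step is the main obstacle, because a priori $\arg\!\big(e^{is}P(e^{-s})\big)$ may vary substantially over the relevant windows; I would handle it by a Gaussian–window averaging of $|f(-t)|^2$ about $t_j$ (using $\int_{-\infty}^0|f|^2\le\|f\|_{K_{B^+_1}}^2$ to absorb the tails) or, failing that, by a normal–families/Levinson-type estimate for the variation of the analytic phase on these windows. (When $P$ is real on $(0,1)$, as in the construction, this difficulty is absent — which is exactly why the two halves of the theorem are of genuinely different character.)
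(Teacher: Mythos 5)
Your route is genuinely different from the paper's. There, one shows that every $f\in K_{B^+_1}$ is bounded and analytic in the slit domain $\Delta=\mathbb C\setminus\{\Re z\ge0,\,-2\le\Im z\le0\}$, transplants by the Christoffel--Schwarz map $\nu:\mathbb C_+\to\Delta$ (which behaves like $z\mapsto z^2/2$), and reads off both halves of the theorem from the classical facts that a nonzero $H^\infty(\mathbb C_+)$ function cannot decay super-exponentially on $i\mathbb R_+$ but can decay exponentially; the square-root appears because $\nu(iy)\approx -y^2/2$. You instead use the Shapiro--Shields unconditional basis of kernels at the interpolating sequence $\{n+i\}$ to identify $f(-t)$ with (a constant times) the Laplace transform of $s\mapsto e^{is}P(e^{-s})$ with $P\in H^2(\mathbb D)$, $P(0)=0$. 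This is a nice, very concrete model, and your existence half looks essentially correct: taking $P$ to be $w$ times the atomic singular inner function at $1$ makes the Laplace integrand nonnegative, $\coth(s/2)\ge 2/s$ gives the clean bound, and the saddle-point estimate plus rescaling yields $\log|f(x)|\le -A|x|^{1/2}$ for all $x<0$. The remaining items (justifying term-by-term integration, uniformity of the saddle bound over all $t>0$, not just large $t$) are routine, as you say.

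The sharpness half, however, has a genuine gap, and it is exactly the one you flag. You need a lower bound on the \emph{oscillatory} integral $\int_0^\infty e^{is}P(e^{-s})e^{-ts}\,ds$ at well-chosen $t_j$, but a lower bound on $|P|$ near $s_j$ says nothing about the phase of $e^{is}P(e^{-s})$, which for a general $P\in H^2$ can rotate rapidly across the window of width comparable to the saddle (take $P$ with an extra singular inner factor $\exp(i\lambda\tfrac{1+w}{1-w})$ to see the phase derivative $\sim\lambda/s^2$ overwhelm the window length). The two patches you sketch — Gaussian-window averaging of $|f(-t)|^2$, or a normal-families/Levinson estimate on phase variation — are not carried out, and neither is obviously strong enough: $L^2$-averaging gives upper bounds from $\|f\|_{K_{B_1^+}}$, not the pointwise lower bound you need, and controlling $\arg P$ on a thin radial window is essentially as hard as the original problem. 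There is also a smaller but real issue earlier: you assert $(1-r_j)\bigl(-\log|P(r_j)|\bigr)$ stays bounded along a sequence $r_j\to1^-$. The singular and outer factors contribute $O\bigl((1-r)^{-1}\bigr)$ uniformly, but for the Blaschke factor the Blaschke condition $\sum(1-|a_n|)<\infty$ does \emph{not} by itself give $\int_0^1\bigl(-\log|B(r)|\bigr)\,dr<\infty$, so the existence of a good radial subsequence needs a real argument (hyperbolic separation from the zeros, or Frostman-type estimates), not just "sparsity leaves room on the radius." The paper's conformal-map argument sidesteps both difficulties at once, which is why it is the cleaner route for the converse even though your Laplace-transform model is arguably more transparent for the construction.
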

These two results have a certain informal explanation. It is not difficult to show that elements of $K_{B^+_1}$ are bounded (and analytic) not only in $\mathbb{C}_+$ but also in the domain
$$\Delta = \mathbb{C}\setminus \{z: \Re z\geq 0, -2\leq\Im z\leq 0\}.$$
Let $\nu$ be the conformal mapping of the
upper halfplane $\mathbb{C}_+$  onto the domain $\Delta$ such that $\nu(0)=0$, $\nu(\infty)=\infty$. By the
Christoffel--Schwarz formula, $\nu$ is of the form
\begin{equation}
\nu(z)=a_1+a_2\int_{z_0}^z\zeta^{1\slash2}(\zeta-a)^{1\slash2}d\zeta,
\label{nu}
\end{equation}
where $a_1\in\mathbb{C}$, $a_2>0$, $z_0\in\mathbb{C}_+$ and $a > 0$. Function $\nu$ is very close to $a_2z^2\slash2$. It is clear that
$$\nu(z)=a_2z^2\slash2 +o(z^2),\quad \nu'(z)=a_2z+o(z),\quad z\rightarrow\infty.$$ 
So we can think that $K_{B^+_1}$ "becomes" $K_{B_1}$ after the substitution $z\mapsto z^{1\slash2}$. The Poisson measure $\dfrac{dt}{1+t^2}$ becomes $\dfrac{ds}{1+s^{3\slash2}}$, $t=s^{1\slash2}$.
At the same time the behaviour of functions from $K_{B^+_1}$ on the negative semiaxis is similar to the behaviour of functions from $K_{B_1}$ on {\it the imaginary} axis. The last question is an essence related to the $\mathcal{PW_\pi}$ space. This argument explains Theorem \ref{neg}. 

\subsubsection{}
Now we turn to spaces $K_{B^+_\alpha}$ when $\alpha\neq 1$. We will study possible (global) rates of decay of functions from $K_{B^+_\alpha}$. Put
$$A(\alpha) = \sup\{s: \text{ there exists a nonzero } f\in K_{B^+_\alpha} \text{ such that } \log|f(x)|\leq -|x|^s \},$$
$$A_+(\alpha) = \sup\{s: \text{ there exists a nonzero } f\in K_{B^+_\alpha} \text{ such that } \log|f(x)|\leq -x^s, x>0 \}.$$
We define $A_{-}(\alpha)$ analogously. Numbers $A_{\pm}(\alpha)$ are rough characteristics of respective spaces. Their behaviour is, nevertheless, complicated.
\begin{theorem}\textnormal{(}\cite[Theorem 1.4]{BBH}\textnormal{)}
$$A(\alpha)=A_{+}(\alpha)=
\begin{cases}
1\slash\alpha,\qquad \alpha >2,\\
1\slash2,\qquad 2\slash3\leq\alpha\leq 2,\\
-1+1\slash\alpha,\qquad 1\slash2<\alpha<2\slash3,
\end{cases}
A_{-}(\alpha)=
\begin{cases}
1\slash\alpha,\qquad \alpha >2,\\
1\slash2,\qquad 1\leq\alpha\leq 2,\\
-1,\qquad 1\slash2<\alpha<1.
\end{cases}
$$
\label{A}
\end{theorem}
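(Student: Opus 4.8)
The plan is to reduce the problem, via the conformal substitution $z\mapsto z^{1/\alpha}$ already indicated in the text, to the model case $K_{B_1}=\mathcal{PW}_\pi$ together with the analysis already carried out for $K_{B_1^+}$ in Theorems \ref{neg} and the preceding one. The informal picture is that $K_{B^+_\alpha}$ "becomes" $\mathcal{PW}$ under $z\mapsto z^{1/\alpha}$ on the half-plane $\Re z>0$; one checks that elements of $K_{B^+_\alpha}$ extend analytically and boundedly to $\Delta_\alpha:=\mathbb{C}\setminus\{z:\Re z\geq0,\ -c\leq \Im z\leq0\}$ for a suitable $c$, and that the conformal map $\nu_\alpha$ of $\mathbb{C}_+$ onto $\Delta_\alpha$ satisfies $\nu_\alpha(z)\sim \mathrm{const}\cdot z^{\alpha}$ at infinity (for $\alpha>1$ the exterior angle forces an $\alpha$-power; for $\alpha<1$ one instead gets $\Delta_\alpha$ replaced by a sector of opening $<2\pi$ and the exponent is again $\alpha$). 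Under this change of variable the Poisson measure $dt/(1+t^2)$ transforms to a measure $\asymp ds/(1+|s|^{1+1/\alpha})$ along the relevant ray, which is exactly where the threshold $1/\alpha$ on the right-hand side of Theorem \ref{A} for large $\alpha$ comes from. I would organize the proof as: (1) establish the analytic continuation and the conformal model; (2) transport the log-integral / uniqueness statements; (3) transport the construction of extremal decaying functions.

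For the \emph{upper bounds} (i.e.\ showing $\log|f(x)|\le -|x|^s$ is impossible for $f\not\equiv0$ once $s$ exceeds the stated value), the mechanism is a Cartwright-type / logarithmic-integral obstruction: pulling back to $\mathbb{C}_+$ via $\nu_\alpha$, the function $f\circ\nu_\alpha$ lies in (a shift-coinvariant piece of) $H^2$ or at worst in the Cartwright class, so its logarithmic integral against $dt/(1+t^2)$ must be finite by the implication \eqref{log} of Subsection \ref{122}. A decay $\log|f(x)|\le-|x|^s$ on the positive axis translates to $\log|f\circ\nu_\alpha(t)|\lesssim -|t|^{\alpha s}$ there, and combined with the Jacobian one needs $\alpha s\le $ something; tracking constants gives $A_+(\alpha)\le 1/\alpha$ when $\alpha>2$. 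The flat regimes ($A_\pm=1/2$ on the middle ranges of $\alpha$) should come from the classical Paley phenomenon for $\mathcal{PW}$ transported through the model: on $\mathbb{C}_+$ a nonzero Paley--Wiener function cannot decay faster than $e^{-c|t|^{1/2}}$ along the imaginary axis, and the negative real axis of $K_{B^+_\alpha}$ corresponds (as in the discussion after Theorem \ref{neg}) to the imaginary axis of the model, which is why $1/2$ appears and why $A_-$ has a wider $1/2$-plateau than $A_+$. The lowest regime ($-1+1/\alpha$, resp.\ $-1$) is the case where the phase grows so slowly that $1\in K_{B^+_\alpha}$-type minimality enters: here $1/E$ is essentially a minimal majorant and $A_-(\alpha)=-1$ just records that no genuine decay is possible, only the ambient polynomial decay of $1/|E|$.

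For the \emph{lower bounds} (existence of a nonzero $f$ with $\log|f(x)|\le-|x|^s$ for every $s$ below the threshold), I would build $f$ by transporting a known extremizer. In the regime $\alpha>2$ one pulls back from $\mathbb{C}_+$ a multiplier supplied by the Beurling--Malliavin machinery: the target decay $|x|^s$ with $s<1/\alpha$ pulls back to a majorant $\Omega(t)\asymp |t|^{\alpha s}$ with $\alpha s<1$, which is sublinear, so Theorem \ref{mainth} (or the de Branges analogues in Section \ref{s2}, e.g.\ the mainly-increasing machinery) furnishes a nonzero element of the model space below $e^{-\Omega}$; pushing forward through $\nu_\alpha$ and multiplying by $1/E$ gives the desired $f\in K_{B^+_\alpha}$, the $1/E$ factor absorbing the bounded-type discrepancy between $f\circ\nu_\alpha$ and a genuine $\mathcal{PW}$ function. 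In the plateau regime one instead transports the Paley-type extremizer $e^{i c z^{1/2}}$ for $\mathcal{PW}$ (this is precisely the content of the first half of Theorem \ref{neg} for $K_{B_1^+}$, which one re-runs in the variable $z^{1/\alpha}$). In the lowest regime the construction degenerates to producing a function comparable to $1/E$, which already has the claimed decay.

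The main obstacle I expect is Step (1): making the conformal-model reduction \emph{rigorous} rather than heuristic — namely proving that every $f\in K_{B^+_\alpha}$ really does extend analytically and with the correct growth to the slit domain $\Delta_\alpha$, that the correspondence $f\leftrightarrow f\circ\nu_\alpha$ maps $K_{B^+_\alpha}$ onto (a concrete subspace of) a model space for which the log-integral theory applies, and that the discrepancy is controlled by a single fixed entire factor of zero exponential type so that it costs nothing in the decay estimates. A secondary but real difficulty is pinning down the exact breakpoints ($\alpha=2$, $\alpha=2/3$, $\alpha=1$): each is the value of $\alpha$ at which one of the three competing obstructions (finiteness of the transported log-integral $\leftrightarrow$ exponent $1/\alpha$; Paley's half-power obstruction $\leftrightarrow$ exponent $1/2$; minimality of $1/E$ $\leftrightarrow$ exponent $-1+1/\alpha$ or $-1$) overtakes the others, so the proof must verify both an existence and a matching nonexistence statement on each open subinterval and check consistency at the endpoints.
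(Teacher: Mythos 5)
Your overall plan — analytic continuation to a slit domain, a conformal change of variable, and transport of logarithmic-integral and Paley-type obstructions — matches the paper's declared ingredients (the Christoffel--Schwarz map $\nu$ from \eqref{nu} and the estimate of $K_{B^+_\alpha}$-functions on $\Delta$ cited as Lemma~3.2 of \cite{BBH}). But there is a genuine computational error in Step~(1), and it locates the $\alpha$-dependence in the wrong place. You claim the conformal map of $\mathbb{C}_+$ onto $\Delta_\alpha$ behaves like $z^\alpha$ at infinity and deduce that the Poisson measure transforms to $ds/(1+|s|^{1+1/\alpha})$. Both claims are false: the slit domain is the \emph{same} $\Delta$ for every $\alpha$ (the poles of $f\in K_{B^+_\alpha}$ lie in a half-strip of fixed height, independent of $\alpha$), and the formula \eqref{nu} gives $\nu'(z)=a_2 z^{1/2}(z-a)^{1/2}\sim a_2 z$, hence $\nu(z)\sim a_2z^2/2$. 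The conformal exponent is always $2$, and the Poisson measure transforms to $ds/(1+s^{3/2})$ for every $\alpha$, which is exactly what the paper records after Theorem~\ref{neg} in the case $\alpha=1$; your formula already disagrees with that benchmark at $\alpha=1$.

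The $1/\alpha$-threshold does not come from the conformal map; it comes from the growth of the generating function $E$. As the paper's sketch notes, $E$ is a canonical product with zeros $k^\alpha-i$ and satisfies $\log|E(z)|\asymp 1+|z|^{1/\alpha}$ outside an exceptional set, and coupled with the reproducing-kernel inequality $|E(z)f(z)|^2\lesssim C_f\bigl(|E(z)|^2-|E^*(z)|^2\bigr)/\Im z$ (with $|E^*(z)|=|E(z-2i)|$) this is what controls the decay of $f\in K_{B^+_\alpha}$. Relatedly, you assume boundedness of $f$ on $\Delta_\alpha$ for every $\alpha$, but Lemma~3.2 of \cite{BBH} gives boundedness only for $\alpha\geq1$; for $1/2<\alpha<1$ the best available bound is $\log|f(z)|\lesssim(1+|z|)^{-1+1/\alpha}$, and it is this weaker bound — not the ``minimality of $1/E$'' heuristic you invoke — that produces the regime $A_+(\alpha)=-1+1/\alpha$. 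So while the framework you set up is the right one, the key exponent computation is wrong, and as written the argument would not yield the stated thresholds, in particular not the lowest regime.
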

The main ingredients of the proof are the conformal mapping $\nu$ (see \eqref{nu}) and an estimate of functions from $K_{B^+_\alpha}$ in the domain $\Delta$.
\begin{lemma}\textnormal{(}\cite[Lemma 3.2]{BBH}\textnormal{)} Let $f$ be in $K_{B^+_\alpha}$. If $\alpha\geq 1$, then $f$ is bounded in $\Delta$. If $1\slash2<\alpha<1$, then
$$\log|f(z)|\lesssim (1+|z|)^{-1+1\slash\alpha}.$$
\end{lemma}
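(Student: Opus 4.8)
The plan is to reduce estimates on $\Delta$ to growth estimates on $\overline{\mathbb{C}_+}$ by means of the isometric involution of $K_B$ (here $B:=B^+_\alpha$) together with the explicit product for $B$. First I would put $Cf:=Bf^*$, understood via the meromorphic continuations of $B$ and of $f$ to $\mathbb{C}$; since $B$ satisfies the Blaschke condition --- this is where the assumption $\alpha>1/2$ first enters --- it, and hence every element of $K_B$, continues meromorphically to $\mathbb{C}$ with poles only at the reflected zeros $n^\alpha-i$, all of which lie in the removed half-strip, so $f$ is analytic in $\Delta$. The map $f\mapsto Cf$ (on $\mathbb{R}$ it is $f\mapsto B\overline f$) is the isometric involution of $K_B$ already exploited in Section~\ref{s2}: $Cf\in K_B$, $\|Cf\|=\|f\|$, and $|Cf(x)|=|f(x)|$ for $x\in\mathbb{R}$. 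Solving $Cf=Bf^*$ for $f^*$ and applying $g\mapsto g^*$ gives $|f(w)|=|Cf(\bar w)|/|B(\bar w)|$ for $\Im w\le 0$, $w\notin\{n^\alpha-i\}$; together with the trivial upper-half-plane part, this expresses $\log|f|$ on all of $\Delta$ in terms of $Cf$ on $\overline{\mathbb{C}_+}$ and of $-\log|B|$ on $\overline{\mathbb{C}_+}$.

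Second, I would estimate $f$ and $Cf$ in $\overline{\mathbb{C}_+}$. By \eqref{arg} the phase function of $B$ satisfies $\varphi'_B(x)=2\sum_n((x-n^\alpha)^2+1)^{-1}$, and comparing this sum with $\alpha^{-1}\int_0^\infty t^{1/\alpha-1}((x-t)^2+1)^{-1}\,dt$ (substitution $t=n^\alpha$) yields $\varphi'_B(x)\lesssim(1+|x|)^{\max(0,\,1/\alpha-1)}$. Since $\|k_x\|_{K_B}^2=|B'(x)|/(2\pi)=\varphi'_B(x)/(2\pi)$ for a real point $x$ (as is standard for model spaces, and follows from \eqref{repr}), the pointwise bound $|g(x)|\le\|g\|_{K_B}\|k_x\|_{K_B}$, applied to $g=f$ and $g=Cf$, gives $|f(x)|,|Cf(x)|\lesssim\|f\|\,(1+|x|)^{\beta}$ on $\mathbb{R}$, where $\beta:=\tfrac12\max(0,\,1/\alpha-1)\in[0,1)$. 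For $\alpha\ge 1$ these boundary values are bounded; since $f$ and $Cf$ lie in the Smirnov class, the Smirnov maximum principle then gives $f,Cf\in H^\infty(\mathbb{C}_+)$ with norms $\lesssim\|f\|$. For $1/2<\alpha<1$ I would bring in the outer function $G$ on $\mathbb{C}_+$ with $|G|=(1+|\cdot|)^{\beta}$ on $\mathbb{R}$ (it exists because the logarithmic integral of $(1+|\cdot|)^{\beta}$ is finite); one checks $|G(w)|\asymp(1+|w|)^{\beta}$ on $\mathbb{C}_+$, and applying the Smirnov maximum principle to $f/G$ and $Cf/G$ gives $|f(w)|,|Cf(w)|\lesssim\|f\|\,(1+|w|)^{\beta}$ for all $w\in\overline{\mathbb{C}_+}$.

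Third, I would prove the lower bound for $|B|$ off the slit. Starting from $-\log|B(\zeta)|=\tfrac12\sum_n\log\left(1+\dfrac{4\Im\zeta}{(\Re\zeta-n^\alpha)^2+(\Im\zeta-1)^2}\right)$ for $\Im\zeta>0$, using $\log(1+s)\le s$ and the same sum-to-integral comparison, one obtains, for $\zeta$ with $\Re\zeta\le 0$ or $\Im\zeta\ge 2$,
\[
-\log|B(\zeta)|\ \lesssim\ \Im\zeta\int_0^\infty\frac{t^{1/\alpha-1}\,dt}{(\Re\zeta-t)^2+(\Im\zeta)^2}\ +\ O(1),
\]
and the integral is, up to a constant, the value at $\zeta$ of the Poisson extension to $\mathbb{C}_+$ of the function equal to $t^{1/\alpha-1}$ on $(0,\infty)$ and to $0$ on $(-\infty,0)$; this extension is $\lesssim(1+|\zeta|)^{\max(0,\,1/\alpha-1)}$, so $-\log|B(\zeta)|\lesssim(1+|\zeta|)^{\max(0,\,1/\alpha-1)}$ on that set (and is bounded when $\alpha\ge 1$). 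Now if $w\in\Delta$ with $\Im w\le 0$, then $\bar w\in\overline{\mathbb{C}_+}$ necessarily has $\Re\bar w\le 0$ or $\Im\bar w\ge 2$ (otherwise $w$ would lie in the removed half-strip), so combining the first step with the estimates for $Cf$ and for $-\log|B|$ gives $\log|f(w)|=\log|Cf(\bar w)|-\log|B(\bar w)|\lesssim\beta\log(1+|w|)+(1+|w|)^{\max(0,\,1/\alpha-1)}$, while for $w\in\mathbb{C}_+$ the even stronger bound $\log|f(w)|\lesssim\beta\log(1+|w|)$ holds. Hence $\log|f(w)|\lesssim(1+|w|)^{\max(0,\,1/\alpha-1)}$ throughout $\Delta$: this is $O(1)$, i.e.\ $f$ is bounded in $\Delta$, when $\alpha\ge 1$, and is $\lesssim(1+|w|)^{-1+1/\alpha}$ when $1/2<\alpha<1$, which is the assertion.

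The step I expect to resist is the off-slit estimate of $-\log|B|$, made uniform and with the sharp exponent. The care needed is: (a) keeping $\Im\zeta$ inside the logarithm rather than pulling it out crudely, so that when $\Im\zeta\ge 2$ one has $(\Im\zeta-1)^2\asymp(\Im\zeta)^2$ and the contribution is governed by the Poisson extension considered above; the convergence of that extension, equivalently of $\int_0^\infty s^{1/\alpha-1}(s^2+1)^{-1}\,ds$, is precisely where $\alpha>1/2$ is used again; (b) the ``corner'' $\zeta\approx 2i$ and the edge $\{iv:0\le v\le 2\}$, where a few summands have small denominators, coming from the Blaschke factor whose zero is $1+i$ --- these stay bounded because on the admissible set the denominators $(\Re\zeta-n^\alpha)^2+(\Im\zeta-1)^2$ are bounded away from $0$, and the remaining tail is controlled by $\sum_n n^{-2\alpha}<\infty$; cleanest is to dispose of $\{|\zeta|\le C_0\}$ by compactness (the admissible set contains no zero of $B$) and to argue only for large $|\zeta|$. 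A minor secondary point is the comparison $|G(w)|\asymp(1+|w|)^{\beta}$ in $\mathbb{C}_+$ for $0<\beta<1$, which follows from $(1+|w|)^{\beta}\asymp|w+i|^{\beta}$ and finiteness of the logarithmic integral of $(1+|\cdot|)^{\beta}$.
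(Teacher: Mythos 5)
Your argument is correct, but it is organized rather differently from the proof sketched in the paper (and in \cite{BBH}). The paper applies the reproducing-kernel bound of the de Branges space at \emph{complex} points, $|E(z)f(z)|^2\le \|Ef\|^2_{\he}\,k_z(z)$ with $k_z(z)=\frac{|E(z)|^2-|E^*(z)|^2}{4\pi\Im z}$, and then feeds in the asymptotics of the canonical product, $\log|E(z)|\asymp 1+|z|^{1/\alpha}$ outside an exceptional set, together with $|E^*(z)|=|E(z-2i)|$; all of $\Delta$ is handled at once and the whole estimate reduces to known growth theory for canonical products of order $1/\alpha$. You instead use the kernel only on $\mathbb{R}$ (where $\|k_x\|^2=\varphi_B'(x)/2\pi$), push into $\overline{\mathbb{C}_+}$ by the Smirnov maximum principle against the outer function $(z+i)^\beta$, and reach the lower part of $\Delta$ via the involution $f\mapsto B\bar f$ combined with a hands-on bound for $-\log|B|$ off the reflected slit, using $\log(1+s)\le s$ and a sum-to-integral comparison against the Poisson extension of $t^{1/\alpha-1}$. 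The paper's route buys brevity at the price of invoking Levin-type asymptotics for canonical products of non-integer order; your route is longer but entirely elementary, and it makes the uniformity over $\Delta$ explicit --- in effect you re-derive exactly the fragment of the canonical-product asymptotics that is actually needed, namely $-\log|B(\bar w)|\lesssim(1+|w|)^{\max(0,\,1/\alpha-1)}$ on $\{\Re\zeta\le 0\}\cup\{\Im\zeta\ge 2\}$, rather than quoting it. You correctly identify the delicate points (the near-diagonal Blaschke factors when $\bar w$ is close to the corner or the edge of the strip, and the role of $\alpha>1/2$ in the convergence of $\sum n^{-2\alpha}$ and of the Poisson integral), and your treatment of them is sound.
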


We shortly describe one of possible proofs.  Let us return to the corresponding de Branges space $\he$. The function $Ef$ is in $\he$ and
$$|E(z)f(z)|^2\leq \|Ef\|^2_{\he}\cdot\|k_z(\cdot)\|_{\he}^2\leq C_f \frac{|E(z)|^2-|E^*(z)|^2}{4\pi \Im z},\quad z\in\mathbb{C}.$$
 Function $E$ is a canonical product with respect to the zero sequence $k^\alpha-i$, $k\in\mathbb{N}$ and satisfies $\log|E(z)|\asymp  1+|z|^{1\slash\alpha}$ outside an exceptional set. 
On the other hand $|E^*(z)|=|E(z-2i)|$. This gives the required estimate.

It is interesting to note that Theorem \ref{A} is closely related to weighted approximation (see Remark in \S5 of \cite{BBH}).

Analogous results are obtained for the Blaschke products with two-sided zeros
having different power growth in the positive and negative directions (see \cite[Theorem
5.6]{BBH}).

\subsection{Irregular behaviour of the phase function. Tangential zeros. \label{tang}} 

In this subsection we consider the situation when zeros of the Blaschke product approach  the real axis tangentially.
Let $B$ be the Blaschke product with zeros $z_n=n+iy_n$, where $0<y_n\leq1$. If $y_n$ tend to zero not too rapidly, then there is no
qualitative difference between the classes $K_B$ and $K_{B_1}$.
\begin{theorem}\textnormal{(}\cite[Corollary 4]{Bl3} and \cite[Theorem 1.5(1)]{BBH}\textnormal{)}
Let a sequence $\{y_n\}_{n\in\mathbb{Z}}$ be even and nonincreasing for $n\geq 0$. If
$$\sum_n\frac{\log y_n}{1+n^2}>-\infty,$$
then for any even positive function $\omega$ with $\mathcal{L}(\omega)>-\infty$, there exists a nonzero $f\in K_B$ such that $|f|\leq\omega$.
\label{logy1}
\end{theorem}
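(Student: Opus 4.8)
The plan is to reduce the statement to the mainly-increasing machinery of Theorems \ref{HMTh}, \ref{HMTh2} and \ref{HMTh}-type representations. The central object is the phase function $\varphi_B$ of the tangential Blaschke product, and the strategy is to show that under the hypothesis $\sum_n \frac{\log y_n}{1+n^2} > -\infty$, for every even positive $\omega$ with $\mathcal{L}(\omega) > -\infty$ the function $\varphi_B - 2\widetilde{\log\omega}$ is (up to a small perturbation) mainly increasing, after which the existence of a nonzero $f \in K_B$ with $|f| \le \omega$ follows immediately from Theorem \ref{HMTh2}. First I would record the formula \eqref{arg}: here $\varphi_B'(x) = 2\sum_n \frac{y_n}{(x-n)^2 + y_n^2}$, so $\varphi_B$ increases by roughly $2\pi$ across each interval $(n-\tfrac12, n+\tfrac12)$, i.e. $\varphi_B(n+1) - \varphi_B(n) \asymp 1$, which is exactly the quantitative increase condition in Definition \ref{maininc} with $d_n = n + \tfrac12$ and $I_n = (n-\tfrac12, n+\tfrac12)$. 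The oscillation bound $\sup_{s,t \in I_n}(\varphi_B(s) - \varphi_B(t)) \le C$ is also easy from \eqref{arg} since on $I_n$ the contribution of the $n$-th term is bounded by $2\pi$ and the tails are uniformly bounded.

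The real work is the integral oscillation condition $\frac{1}{|I_n|}\int_{I_n}|\varphi_B'(x) - \varphi_B'(t)|\,dt \le C$ for $x \in I_n$, which is where the tangential behaviour $y_n \to 0$ threatens to destroy things: near $x = n$ the density $\varphi_B'$ has a spike of height $\asymp 1/y_n$ and width $\asymp y_n$. The key point is that this is an $L^1$-type condition, not an $L^\infty$ one, so the spike is integrable: $\int_{I_n} \frac{y_n}{(x-n)^2+y_n^2}\,dx = O(1)$ uniformly in $n$. Thus on each $I_n$ the density $\varphi_B'$ is dominated by a single bump of unit mass plus a bounded background, and the mean oscillation over $I_n$ is $O(1)$. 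Hence $\varphi_B$ itself is mainly increasing. The summability hypothesis $\sum_n \frac{\log y_n}{1+n^2} > -\infty$ enters not in checking the mainly-increasing property of $\varphi_B$ but in handling the subtraction of $2\widetilde{\log\omega}$: one must show that the combination $\varphi_B - 2\widetilde{\log\omega}$ is still mainly increasing. Since $\mathcal{L}(\omega) > -\infty$ and $\widetilde{\log\omega}$ can be chosen (after a harmless regularization, e.g. replacing $\omega$ by a slightly larger majorant with controlled Hilbert transform, in the spirit of Theorem \ref{Naz}) so that its contribution does not spoil the interval-by-interval estimates. The summability condition on $\log y_n$ is precisely what guarantees that the Blaschke condition holds and, more importantly, that the ``defect'' phase $\varphi_B - 2\pi k$ compensating the tangential approach has a controlled Hilbert transform on the complement of each $I_n$ — this is the analog of the finiteness of $\mathcal{L}$ and feeds the construction of the multiplier $m$.

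I expect the main obstacle to be exactly the interaction between the tangential spikes of $\varphi_B'$ and the subtracted term $2\widetilde{\log\omega}$: one needs the mean-oscillation inequality in \eqref{maininceq} for the \emph{difference}, and a priori $\widetilde{\log\omega}$ is only controlled globally via $\mathcal{L}(\omega) > -\infty$, not interval-by-interval. The way around this is to not aim for $|f| \le \omega$ with the \emph{given} $\omega$ directly, but to first replace $\omega$ by an equivalent (up to bounded factors) $\omega_1$ for which $\widetilde{\log\omega_1}$ is Lipschitz with small constant — this is legitimate because a bounded multiplicative perturbation of $\omega$ changes neither the finiteness of $\mathcal{L}$ nor the conclusion $|f| \lesssim \omega$, and such a regularization is available from Nazarov's theorem (Theorem \ref{Naz}) applied to $\log(1/\omega)$. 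Once $\widetilde{\log\omega_1}$ is Lipschitz with sufficiently small constant, it perturbs the mainly-increasing structure of $\varphi_B$ only by an amount absorbed into the constant $C$ in \eqref{maininceq}, the sum $\varphi_B - 2\widetilde{\log\omega_1}$ is mainly increasing, and Theorem \ref{HMTh2} supplies the representation $\varphi_B - 2\widetilde{\log\omega_1} = 2\widetilde{\log m} + 2\pi k + \gamma$ with $m \in L^\infty \cap L^2$, $\mathcal{L}(m) > -\infty$; then $m\omega_1 \in |K_B|$ by Theorem \ref{main}-type reasoning, giving the desired nonzero $f \in K_B$ with $|f| = m\omega_1 \lesssim \omega$. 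The evenness of $\omega$ and $\{y_n\}$ is used only to keep $k$ and the regularization symmetric, so that no parity obstruction arises in choosing $\gamma$.
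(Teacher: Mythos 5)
Your central claim---that $\varphi_B$ is mainly increasing even when $y_n\to 0$---is false, and this breaks the whole argument. Look carefully at the integral condition in Definition~\ref{maininc}: it requires, \emph{for every} $x\in I_n$, that $\frac{1}{|I_n|}\int_{I_n}|\varphi_B'(x)-\varphi_B'(t)|\,dt\le C$. You correctly identify that $\varphi_B'$ has a spike of height $\asymp 1/y_n$ near $x=n$, but then you argue that the condition survives because the spike ``has unit mass.'' That would be relevant if the condition were a mean-oscillation bound $\frac{1}{|I_n|^2}\iint_{I_n\times I_n}|\varphi_B'(s)-\varphi_B'(t)|\,ds\,dt\le C$, but the condition as stated pins $x$. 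Choosing $x=n$ gives $\varphi_B'(n)\gtrsim 1/y_n$, while $\varphi_B'(t)=O(1)$ for $|t-n|\gtrsim y_n$, so $\int_{I_n}|\varphi_B'(n)-\varphi_B'(t)|\,dt\gtrsim 1/y_n\to\infty$. Concretely, take $y_n=n^{-2}$: this satisfies $\sum_n\frac{\log y_n}{1+n^2}>-\infty$, yet the left side of the integral condition at $x=n$ grows like $n^2$. The summability hypothesis is not a hypothesis on $\omega$ at all---it is exactly the quantitative control on the tangential zeros, and your proposal never actually uses it in a load-bearing way (Blaschke convergence already follows from $y_n\le 1$ with $z_n=n+iy_n$, which is much weaker). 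The Nazarov regularization of $\omega$ is fine as far as it goes, but it cannot repair a phase function whose derivative is genuinely unbounded on the $I_n$.

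The missing idea, as the paper itself indicates immediately after stating this theorem, is to compare $\varphi_B$ with the phase function $\varphi_{B_1}$ of the non-tangential product $B_1$ (zeros $n+i$), for which the mainly-increasing machinery does apply. The difference $\varphi_B-\varphi_{B_1}$ is bounded (both functions gain about $2\pi$ per unit interval), but its Hilbert transform must be controlled, and that is exactly where $\sum_n\frac{\log y_n}{1+n^2}>-\infty$ enters: this sum is, up to normalization, the logarithmic integral governing the ``cost'' of moving each zero from $n+i$ down to $n+iy_n$, and its finiteness lets one absorb the discrepancy into the outer factor / multiplier $m$ in the representation of Theorem~\ref{HMTh2}. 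Without this comparison step, the direct application of the mainly-increasing criterion to $\varphi_B$ fails, and there is no way to recover it by perturbing $\omega$.
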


The next result gives other conditions on $y_n$ and is closer to the original Beurling--Malliavin statement.
\begin{theorem}\textnormal{(}\cite[Corollary 4]{Bl3}\textnormal{)}
Let a bounded positive sequence $\{y_n\}_{n\in\mathbb{Z}}$ be such that $y_n\asymp y_{n+1}$. If
$$\sum_n\frac{\log y_n}{1+n^2}>-\infty,$$
then for any positive function $\omega$ such that $\mathcal{L}(\omega)>-\infty$ and $\log \omega$ is Lipschitz there exists a nonzero $f\in K_B$ such that $|f|\leq\omega$.
\end{theorem}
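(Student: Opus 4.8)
The plan is to reduce this statement to a combination of Theorems \ref{HMTh}, \ref{HMTh2}, and the multiplier machinery already assembled in this section. By the reformulation of the modulus problem, it suffices to show that the function $\varphi_B - 2\widetilde{\log\omega}$ admits a representation $2\widetilde{\log m} + \arg I + 2\pi k$ with $m \in L^\infty \cap L^2$ and $m\omega \in L^2$; by Theorem \ref{HMTh} one may even take $I$ constant, so the whole task is to verify that $\varphi_B - 2\widetilde{\log\omega}$ (after an admissible modification by $2\pi k$ and an additive constant) is \emph{mainly increasing} in the sense of Definition \ref{maininc}, and then quote Theorem \ref{HMTh2}.

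First I would analyze $\varphi_B'$. By formula \eqref{arg}, $\varphi_B'(x) = 2\sum_n \frac{y_n}{(x-n)^2 + y_n^2}$. The hypotheses $y_n \asymp y_{n+1}$ and $0 < y_n \le 1$ should give $\varphi_B'(x) \asymp 1$ on each unit interval, or at least that $\varphi_B$ increases by a bounded amount comparable to $1$ across each interval $(n, n+1)$ while having controlled oscillation of its derivative there. This is the standard local estimate for Blaschke products with one zero per unit vertical strip: the term indexed by the nearest integer dominates, and the slowly-varying condition $y_n \asymp y_{n+1}$ propagates the comparison to neighbors; the tail $\sum_{|n - x| \ge 2}$ contributes an $O(1)$ term with bounded variation. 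Consequently $\varphi_B$ is mainly increasing with $d_n = n$. Next I would handle $2\widetilde{\log\omega}$: since $\log\omega$ is Lipschitz, a result on the Hilbert transform (this is exactly where the hypothesis is used — compare the discussion preceding Theorem \ref{lipth}) together with $\mathcal{L}(\omega) > -\infty$ shows that $\widetilde{\log\omega}$ is well-defined and, more importantly, that $(\widetilde{\log\omega})'$ is controlled in an $L^1$-averaged sense on unit intervals, so that subtracting $2\widetilde{\log\omega}$ from $\varphi_B$ preserves both inequalities in \eqref{maininceq}. The condition $\sum_n \frac{\log y_n}{1+n^2} > -\infty$ enters to guarantee that $\varphi_B - 2\widetilde{\log\omega}$ does not drift off to $-\infty$ too fast, i.e. that the increments $f(d_{n+1}) - f(d_n)$ stay $\asymp 1$ rather than degenerating; equivalently it is the logarithmic-integral condition ensuring that $1/|B|$ (or the relevant outer factor) is not too small, keeping $m\omega \in L^2$.

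The main obstacle will be the bookkeeping that shows $\varphi_B - 2\widetilde{\log\omega}$ has \emph{increments bounded below} by a positive constant on the intervals $I_n$ — i.e. that the $\widetilde{\log\omega}$ term, which is only Lipschitz and whose Hilbert transform can behave badly (as the paper emphasizes, $\widetilde{\Omega}$ of a Lipschitz function need not be uniformly continuous), does not destroy the near-monotonicity supplied by $\varphi_B$. Here one cannot simply estimate $|(\widetilde{\log\omega})'|$ pointwise; instead one absorbs the obstruction into the integer-valued function $k$ and an additive constant, choosing $k$ so that the corrected function stays in a bounded band, and then invoking Theorem \ref{HMTh2} (which is precisely designed to produce a \emph{bounded} Hilbert transform of such a corrected function, the hard analytic content already being packaged there). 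So the proof is essentially a verification-of-hypotheses argument: establish the local comparison $\varphi_B' \asymp 1$ from $y_n \asymp y_{n+1}$, use the Lipschitz bound on $\log\omega$ to control the perturbation in the averaged sense of \eqref{maininceq}, use $\sum \frac{\log y_n}{1+n^2} > -\infty$ to keep the increments nondegenerate and $m\omega \in L^2$, conclude $\varphi_B - 2\widetilde{\log\omega}$ is mainly increasing, and apply Theorems \ref{HMTh2} and \ref{HMTh}.
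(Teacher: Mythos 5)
Your plan breaks down at its first step: $\varphi_B$ is \emph{not} mainly increasing under the stated hypotheses, so the reduction to Theorem~\ref{HMTh2} is unavailable. The hypotheses $y_n\asymp y_{n+1}$ and $\sum_n\frac{\log y_n}{1+n^2}>-\infty$ do not force $y_n\asymp 1$; for example $y_n=e^{-\sqrt{|n|}}$ is admissible and $y_n\to 0$. In that regime the claim ``$\varphi_B'\asymp 1$'' is false: by \eqref{arg}, near $x=n$ one has $\varphi_B'(x)\approx \frac{2y_n}{(x-n)^2+y_n^2}$, a spike of height $\sim 1/y_n$ on a window of width $\sim y_n$. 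Your fallback --- that $\varphi_B(n+1)-\varphi_B(n)\asymp 1$ --- is true (each zero deposits total phase $2\pi$), but it only gives the \emph{first} requirement of Definition~\ref{maininc}. The integral inequality in \eqref{maininceq} fails for every choice of partition: any interval over which the oscillation of $\varphi_B'$ is $O(1)$ carries only $o(1)$ phase near a spike, while any interval carrying phase $\asymp 1$ near a spike satisfies $\sup_{x\in I_n}\frac{1}{|I_n|}\int_{I_n}|\varphi_B'(x)-\varphi_B'(t)|\,dt\gtrsim 1/y_n$. The paper explicitly flags this obstruction when introducing the irregular-phase setting: when the zeros approach $\mathbb{R}$, $\varphi$ ``almost jumps'' at the points $\Re z_n$ and ``there is no hope for the cancellation in the integral inequalities in \eqref{maininceq}.''

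The route the paper actually sketches, in the remark immediately following this statement, is a comparison argument: one estimates the \emph{difference} $\varphi_B-\varphi_{B_1}$ between the tangential phase and the phase of the reference Blaschke product $B_1$ (zeros at $n+i$), for which $\varphi_{B_1}'\asymp 1$ and the classical machinery applies. The series $\sum_n\frac{\log y_n}{1+n^2}>-\infty$ is a logarithmic-integral condition allowing one to express that difference, up to $2\pi k$ and a constant, as the Hilbert transform of the logarithm of an outer factor governed by the $y_n$'s, while $y_n\asymp y_{n+1}$ supplies the local regularity needed for those estimates; the resulting correction is folded into the multiplier $m$ of Theorems~\ref{HMTh} and~\ref{HMTh2} applied with $\varphi_{B_1}$ in place of $\varphi_B$. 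In particular, your reading of the series condition as keeping the increments $f(d_{n+1})-f(d_n)$ from degenerating is a misdiagnosis: those increments are $\asymp 1$ unconditionally, and the actual role of the series is to tame precisely the spikes that make the ``mainly increasing'' route impossible to run directly on $\varphi_B$.
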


The proofs of these results include ideas from the classical case and an accurate estimate of the difference of phase functions of the Blaschke products $B(=B_{\{y_n\}})$ and $B_1$.

 On the other hand if $y_n$ are extremely small, then any nonzero function from $K_B$ decays as a power at infinity. Before stating the result we give an interepretation of this phenomenon.
Let us return to the de Branges setting. Supppose all zeros of the generating function $E$ are extremely close to the real line. Let $F$ be an arbitrary nonzero element of $\he$. The inclusion $\dfrac{F}{E}\in L^2(\mathbb{R})$ implies that $F$ has a zero near every zero of $E$ (may be excluding a finite number of zeros). In addition $F$ grows along the imaginary axis not faster than $E$. So, $F$ has at most a finite number of extra zeros and, hence, $\dfrac{F}{E}$ tends to zero not faster than a power. This property is called {\it strong localization } property of the space $\he$ (see the discussion in the Introduction of \cite{ABB}). 

As it was shown in \cite{ABB} the {\it strong localization } property is equivalent to the completenss of polynomials in the corresponding weighted space of sequences.

\begin{theorem}\textnormal{(}\cite[Theorem 1.5(2)]{BBH}\textnormal{)}
Let $y:\mathbb{R}\rightarrow (0,\infty)$ be an even function nonincreasing on $[0,\infty)$ and such that
$y(n)=y_n$, $n\in\mathbb{Z}$. If the function $-\log y(e^x)$ is convex on $\mathbb{R}$ and $\sum_n\frac{\log y_n}{1+n^2}=-\infty$, then
any function $f\in K_B$ satisfies $\lim\sup_{x\rightarrow\infty}|f(x)x^N|>0$ for some $N>0$.
\label{logy2}
\end{theorem}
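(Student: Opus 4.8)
The plan is to pass to the de Branges picture and estimate the generating function $E$ on the imaginary axis. Let $B$ be the Blaschke product with zeros $z_n = n + iy_n$ and let $\he$ be the associated de Branges space, so $K_B = \frac{1}{E}\he$; a function $f\in K_B$ corresponds to $F = Ef \in \he$. First I would construct $E$ explicitly as a canonical product over the zero set $\{n - iy_n\}$ (the reflected zeros, since the zeros of $\Theta = E^*/E$ are the zeros of $B$). The convexity hypothesis on $-\log y(e^x)$ is exactly what is needed to control $\log|E(iR)|$: one shows $\log|E(iR)| \asymp \sum_n \log\frac{|iR - n + iy_n|}{|iR - n - iy_n|} + (\text{a benign canonical-product growth term})$, and the leading contribution near $n \approx R$ is governed by $\log\frac{R}{y_R}$-type quantities. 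Concretely, the divergence $\sum_n \frac{\log y_n}{1+n^2} = -\infty$ forces $\log|E(iR)|$ to grow \emph{faster} than any power of $R$ along a sequence of radii; the convexity guarantees this growth is regular enough that it persists (no lacunary cancellation). This is the analytic heart of the argument, and I expect it to be the main obstacle: extracting from the sum condition plus convexity a clean, two-sided asymptotic for $\log|E(iy)|$, while handling the exceptional set where the canonical product behaves irregularly.

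Next I would exploit the reproducing-kernel bound, exactly as in the Lemma following Theorem \ref{A}: for $F = Ef \in \he$,
$$|F(z)|^2 \leq \|F\|_{\he}^2 \cdot \|k_z\|_{\he}^2 = \|F\|_{\he}^2 \cdot \frac{|E(z)|^2 - |E^*(z)|^2}{4\pi\,\Im z},$$
so in particular $|F(iy)| \lesssim_F |E(iy)|/\sqrt{y}$ for $y>0$. Combined with the growth estimate from the first step, this says that every $F\in\he$ grows along the imaginary axis essentially no faster than the \emph{super-power} lower bound we proved for $|E|$; equivalently, $F/E$ is bounded on the imaginary axis by something that, on the relevant sequence of radii, does not decay faster than a fixed power $y^{-N_0}$.

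The final step is a localization/uniqueness argument to transfer this imaginary-axis information to the real axis. Suppose, for contradiction, that $f \in K_B$ satisfies $|f(x)| x^N \to 0$ as $x\to\infty$ for every $N$, i.e. $f$ decays faster than every power on $\mathbb{R}$. Since $\he$ has the \emph{strong localization} property under these hypotheses on $\{y_n\}$ (as recalled in the paragraph preceding the theorem, via \cite{ABB}), $F = Ef$ must vanish near all but finitely many zeros of $E$; together with the imaginary-axis growth control, $F/E = f$ is then an entire function of very restricted growth — zero exponential type, bounded on $\mathbb{R}$ away from a power, hence (by a Phragmén–Lindelöf / Cartwright-class argument applied to $f$ against the majorant built from $|E|$) forced to be a polynomial, and then forced to be $0$ by the assumed faster-than-polynomial decay. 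This contradicts $f\neq 0$. Assembling: the combinatorics of $\sum \frac{\log y_n}{1+n^2} = -\infty$ and convexity feed the $E$-growth estimate (Step 1), which feeds the $\he$-growth bound (Step 2), which with strong localization forces the power lower bound $\limsup_{x\to\infty}|f(x)x^N| > 0$ (Step 3).
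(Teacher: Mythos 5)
Your Step 1 misreads where the hypothesis $\sum_n \log y_n/(1+n^2)=-\infty$ acts. The zeros $n-iy_n$ of $E$ have density one and $0<y_n\le 1$, so $E$ has exponential type $\pi$ and $\log|E(iR)|\asymp R$ regardless of how small the $y_n$ are; making the $y_n$ tiny does \emph{not} produce super-power growth of $|E|$ on the imaginary axis (comparing $E$ with the canonical product over $\{n\}$, one checks the ratio stays bounded on $i\mathbb{R}_+$). The divergence $\sum\log y_n/(1+n^2)=-\infty$ is a \emph{real-axis} phenomenon: it quantifies the pointwise smallness of $|E|$ at the integers, $|E(n)|\asymp y_n$ up to polynomial factors, and this is what forces a nonzero $F\in\he$ with $F/E\in L^2$ to develop zeros near all but finitely many integers. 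That is also where the convexity of $-\log y(e^x)$ is needed, to make the resulting near-jumps of $\varphi_B$ regular enough to be controlled in the factorization $\varphi_\Theta=2\widetilde{\log|f|}+\arg I+2\pi k$ of \eqref{feq}. Your Step 2, the reproducing-kernel growth bound, is correct and is the other half of the mechanism: it caps the total zero count of $F$ against that of $E$ on $i\mathbb{R}_+$, yielding the ``at most finitely many extra zeros'' conclusion in the heuristic the paper gives just before the theorem.

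Step 3 is circular: invoking ``strong localization'' from \cite{ABB} as a ready-made tool begs the question, since strong localization is exactly what Theorem \ref{logy2} asserts, and the result of \cite{ABB} cited in the paper is only an \emph{equivalence} with polynomial completeness in a weighted sequence space --- establishing either side under the present hypotheses is the actual work, not an available input. Also, $f=F/E$ is not entire (it has poles at the zeros of $E$ that $F$ leaves uncanceled), so the Phragm\'en--Lindel\"of/Cartwright-class step must be run on the entire function $F$ against a majorant built from $|E|$, not on $f$ treated as an entire function of zero exponential type. For calibration, the survey does not prove this theorem; it cites \cite{BBH} and offers only the informal interpretation (zeros of $F$ forced near those of $E$, growth cap on $i\mathbb{R}$, hence at most polynomial decay of $F/E$). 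Your Steps 2--3 trace that interpretation in spirit, but Step 1 aims at the wrong axis, and Step 3 assumes the conclusion.
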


In \cite{BBH} a result similar to Theorems \ref{logy1} and \ref{logy2} is proved for the case when zeros of $B$ are both one-sided and tangential ($z_k=k^\alpha+iy_k$, $k\in\mathbb{N}$).

\subsection{Two sided estimates. Atomization procedure. \label{two}} 

The approximation of subharmonic functions in the complex plane by logarithms of moduli of entire functions was a longstanding problem
in complex analysis. In \cite{Az} it was proved that if a subharmonic $u$ satisfies $u(z)\lesssim 1+|z|^{\rho+\varepsilon}$ for any $\varepsilon>0$, then there exists an entire function $f$ such that 
$$u(z)-\log|f(z)|=o(|z|^\rho),\quad z\rightarrow\infty,\quad z\notin E_f,$$
where $E_f$ is an appropriate small exceptional set. This result was modified by many authors. A breakthrough has been achieved in \cite{Yul}, where the righthand side was reduced to $O(\log|z|)$. Finally in \cite{LM} 
the best possible constant in $O(\log|z|)$ has been found. Moreover, if $u$ has some extra regularity, then there exists an entire function $f$ whose modulus is comparable to $e^u$,
 $u(z)-\log|f(z)|=O(1)$, \cite[Theorem 3]{LM}.

The construction of the approximating entire function is carried out by an "atomization" of the Riesz measure $\mu$ of $u$. For a given $\mu$ we have to find an atomic measure $\mu_a$ such that the logarithmic potential of $\mu-\mu_a$ is bounded (outside an exceptional set). One of the ideas of the atomization is to choose atoms (zeros of $f$) so that the first moments of the measure $\mu-\mu_a$ with respect to some domains vanish.

We have two special features of our problem:
\begin{itemize}
\begin{item}
We need approximate on the real line only.
\end{item}
\begin{item}
There is an additional restriction $f\in\he$ on the approximating entire function.
\end{item}
\end{itemize}

The second does not allow us to directly apply the results of \cite{Yul} and \cite{LM}. If we want to use Theorem \ref{main} for the construction of $f$ whose modulus is comparable to a given positive $\omega$, we have to put $k\equiv const$ and find an inner function $I$ such that $\phi_\Theta-2\widetilde{\log\omega}-\arg I$ can be represented as the Hilbert transform of a bounded function. As we will see later an atomization procedure is hidden here.

\subsubsection{} To state the theorem we need two definitions.

\begin{definition} 
A partition of the real line into intervals $I_k=[d_k,d_{k+1}]$ (where $\{d_k\}$ is a strictly increasing two-sided sequence) is said to be uniformly short if 
\begin{equation}
\sup_{k\in\mathbb{Z}}\sum_{|k-l|>1}\frac{|I_l|^2}{\dist^2(I_k,I_l)}<\infty.
\label{unshort}
\end{equation}
\end{definition}
Here $|I_l|$ stands for the length of the interval $I_l$, and $\dist(I_k,I_l)$ is the distance between $I_k$ and $I_l$. If the sum converges for all $k$ (but may be not uniformly), then we have the usual definition
of {\it a system of short intervals} which often appears in the Beurling--Malliavin theory. 

\begin{definition}
An increasing function $\Phi$ on $\mathbb{R}$ is said to be regular if there exists a two sided sequence $\{d_k\}$ with $\Phi(d_k)=2\pi k$, $k\in\mathbb{Z}$, such that the partition $I_k=[d_k, d_{k+1}]$ is uniformly short and 
$\sup_{|\Phi(x)-\Phi(y)|<1}\dfrac{\Phi'(x)}{\Phi'(y)}<\infty$.
\end{definition}
The last condition holds if $\Phi$ is mainly increasing.
\begin{theorem}\textnormal{(}\cite[Theorem 2.4]{Bl4}\textnormal{)}
Let a positive $\omega\in L^2(\mathbb{R})$ be such that $\mathcal{L}(\omega)>-\infty$. If $\varphi_\Theta-2\widetilde{\log\omega}$ is regular, then
there exists a function $f\in K_\Theta$ such that $|f|\asymp \omega$.
\label{twoth}
\end{theorem}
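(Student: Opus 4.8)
The plan is to reduce Theorem \ref{twoth} to the earlier representation theorems (Theorems \ref{HMTh}, \ref{HMTh2}, and especially the first theorem of Section \ref{s2} on all non-trivial moduli) by exhibiting an inner function $I$ and a bounded real function $u$ such that
$$
\varphi_\Theta - 2\widetilde{\log\omega} = 2\tilde u + \arg I + 2\pi k
$$
with $k\equiv 0$. Indeed, if such a decomposition is found with $u\in L^\infty(\mathbb{R})$ real, set $m=e^{u}$, so $m\asymp 1$; then by the converse part of the structural theorem there is $f\in K_\Theta$ with $|f| = m\,\omega \asymp\omega$. So the entire problem becomes: \emph{given a regular increasing function $\Phi:=\varphi_\Theta-2\widetilde{\log\omega}$, represent it as $2\tilde u+\arg I$ with $u$ bounded, $I$ inner, and no jump term}. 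This is exactly the point where the ``atomization'' alluded to in Subsection \ref{two} enters.

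First I would use the regularity hypothesis to fix the sequence $\{d_k\}$ with $\Phi(d_k)=2\pi k$ and the uniformly short partition $I_k=[d_k,d_{k+1}]$. The natural candidate for $I$ is a Blaschke product (or meromorphic inner function) $I=B_\Lambda$ whose zeros $\lambda_k$ sit one over each interval $I_k$ at height comparable to $|I_k|$ — these zeros play the role of the ``atoms'' approximating the ``Riesz measure'' $d\Phi/2\pi$. The heuristic is that $\arg B_\Lambda$ increases by roughly $2\pi$ across each $I_k$, matching the increments of $\Phi$, so that the difference $\Phi-\arg B_\Lambda$ should be a bounded-oscillation function on each $I_k$ and, summed up, be of the form $2\tilde u$ with $u$ bounded. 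The key quantitative input is that the uniformly short condition \eqref{unshort} is precisely the hypothesis ensuring that the harmonic conjugate (Hilbert transform) of the ``error'' stays in $L^\infty$: the interaction between far-away intervals is controlled by $\sum_{|k-l|>1}|I_l|^2/\dist^2(I_k,I_l)$, which is exactly the tail appearing when one estimates the Hilbert transform of a function that is ``flat up to an additive $2\pi k$'' on each interval.

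The main steps, in order, would be: (1) construct $\Lambda=\{\lambda_k\}$, $\lambda_k = c_k + i|I_k|$ with $c_k$ a suitable point of $I_k$ (e.g. chosen so that a first-moment/balance condition holds, which is the atomization trick that kills the dangerous linear-in-distance term in the conjugate function estimate); (2) show $B_\Lambda$ is a well-defined inner function — here one needs $\sum \Im\lambda_k/(1+|c_k|^2)<\infty$, which follows from the short-interval condition; (3) prove that $g:=\Phi-\arg B_\Lambda$ has bounded oscillation on each $I_k$ using the second regularity condition $\sup_{|\Phi(x)-\Phi(y)|<1}\Phi'(x)/\Phi'(y)<\infty$ (comparing $\Phi'$ with $|B_\Lambda'|=\arg' B_\Lambda \asymp |I_k|^{-1}$ on $I_k$); (4) adjust by an integer-valued $k$ so that $g-2\pi k$ is globally bounded, and then (5) verify that $g-2\pi k$ is itself (up to a bounded additive error) the Hilbert transform of a bounded function, invoking the uniformly-short estimate \eqref{unshort} together with the standard fact that the conjugate of a $\mathrm{BMO}$-type, interval-wise controlled function is bounded. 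Finally (6) absorb any leftover bounded term and the constant $\gamma$ into $u$ via the earlier theorems, and apply the structural theorem to produce $f$ with $|f|=e^u\omega\asymp\omega$, noting $e^u\omega\in L^2$ because $\omega\in L^2$ and $e^u\asymp 1$.

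The hard part will be step (5) — showing the error function, after subtracting $2\pi k$, has a \emph{bounded} Hilbert transform rather than merely a $\mathrm{BMO}$ one. The naive estimate of $\widetilde{(g-2\pi k)}$ splits into a local part (on $I_k\cup I_{k\pm 1}$, controlled by the bounded oscillation from step (3)) and a nonlocal part $\sum_{|k-l|>1}$; the nonlocal part would in general only be summable in a weak sense, and it is precisely the first-moment cancellation built into the choice of atoms $\lambda_k$ (step (1)) that improves the kernel decay from $1/\dist$ to $1/\dist^2$, at which point \eqref{unshort} gives uniform boundedness. Getting this cancellation compatible simultaneously with the zeros $\lambda_k$ lying in the correct intervals and with $B_\Lambda$ being inner is the technical crux, and I expect it to require the careful balancing argument from \cite{Bl4}.
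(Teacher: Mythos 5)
Your proposal follows essentially the same strategy as the paper's sketch of \cite{Bl4}: reduce to representing $\Phi := \varphi_\Theta - 2\widetilde{\log\omega}$ as $\arg I$ plus the conjugate of a bounded function, atomize by placing one Blaschke zero $x_k + i|I_k|$ over each $I_k$ with $x_k$ fixed by the first-moment balance $\int_{I_k}(\Phi - 2\pi n_I) = 0$ (here $n_I$ is the counting function of the atoms, and the paper then splits the error as $(\Phi - 2\pi n_I) - (\arg I - 2\pi n_I)$), and exploit the uniformly-short condition \eqref{unshort} together with this cancellation to get a bounded conjugate function. One small correction to your step (4): once $\Phi(d_k) = 2\pi k$ and $\Phi - \arg I$ has uniformly bounded oscillation on each $I_k$, the error is already globally bounded, so no non-constant $2\pi k$ adjustment is made — and indeed none could be tolerated, since the conclusion $|f| = e^{u}\omega > 0$ forces $k$ to be constant (jumps of $k$ correspond to real zeros of $f$).
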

If, for example, $f$ satisfies $f'\asymp 1$, then $|I_l|\asymp1$ and, hence, $f$ is regular. So, we get an immediate corollary of this result for the Paley--Wiener space.
\begin{theorem}\textnormal{(}\cite[Theorem 2.6]{Bl4}\textnormal{)}
Let a positive $\omega\in L^2(\mathbb{R})$ be such that $\mathcal{L}(\omega)>-\infty$. If $-\infty<\inf_x (\widetilde{\log\omega})'(x)\leq\sup_x (\widetilde{\log\omega})'(x)<\pi$, then
there exists a function $f\in\pw$ such that $|f|\asymp \omega$.
\end{theorem}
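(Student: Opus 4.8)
The plan is to reduce this statement to Theorem~\ref{twoth} by verifying that, under the hypothesis $-\infty<\inf_x(\widetilde{\log\omega})'(x)\le\sup_x(\widetilde{\log\omega})'(x)<\pi$, the function $\Phi:=\varphi_\Theta-2\widetilde{\log\omega}$ is regular in the sense of the preceding definition, where $\Theta(z)=e^{-i\pi z}\cdot e^{-i\pi z}$ — wait, rather $\Theta=e^{-2\pi i z}$ (equivalently $E(z)=e^{-i\pi z}$) is the generating function for $\mathcal{PW}_\pi$, so that $\varphi_\Theta(x)=2\pi x$ and hence $\varphi_\Theta'\equiv 2\pi$. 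Then
$$\Phi'(x)=2\pi-2(\widetilde{\log\omega})'(x),$$
and the hypothesis gives precisely $\Phi'(x)\in(2\pi-2\pi,\,2\pi+2M]=(0,2\pi+2M]$ for some finite $M$; in particular $0<\inf\Phi'\le\sup\Phi'<\infty$, i.e.\ $\Phi'\asymp 1$. Once we know $\Phi'\asymp 1$, the remark right after Definition of ``regular'' (``The last condition holds if $\Phi$ is mainly increasing'') together with the remark after Definition~\ref{maininc} (``If for example $f'\asymp1$, then $f$ is mainly increasing'') does most of the work: $\Phi'\asymp 1$ makes $\Phi$ mainly increasing, hence $\sup_{|\Phi(x)-\Phi(y)|<1}\Phi'(x)/\Phi'(y)<\infty$ automatically.

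It remains to produce the sequence $\{d_k\}$ with $\Phi(d_k)=2\pi k$ and to check that the partition $I_k=[d_k,d_{k+1}]$ is uniformly short. Since $\Phi$ is continuous, strictly increasing (as $\Phi'>0$), and $\Phi(\pm\infty)=\pm\infty$ (because $\Phi'$ is bounded below by a positive constant forces $\Phi(x)\to+\infty$ and $\to-\infty$), the points $d_k:=\Phi^{-1}(2\pi k)$ are well defined and form a strictly increasing two-sided sequence. From $\Phi'\asymp 1$ we get $|I_k|=d_{k+1}-d_k\asymp 1$ uniformly in $k$, and $\dist(I_k,I_l)\asymp|k-l|$. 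Therefore
$$\sup_{k\in\ZZ}\sum_{|k-l|>1}\frac{|I_l|^2}{\dist^2(I_k,I_l)}\lesssim \sup_{k\in\ZZ}\sum_{|k-l|>1}\frac{1}{|k-l|^2}=2\sum_{m\ge2}\frac1{m^2}<\infty,$$
so \eqref{unshort} holds and $\Phi$ is regular. Applying Theorem~\ref{twoth} with this $\Theta$ yields $f\in K_\Theta$ with $|f|\asymp\omega$; since $K_{e^{-2\pi i z}}=\tfrac1E\mathcal{PW}_\pi$ with $|E|\equiv1$ on $\RR$, after multiplying by $E$ we obtain the desired $f\in\mathcal{PW}_\pi$ with $|f|\asymp\omega$ on $\RR$. (Strictly speaking $\mathcal{PW}_\pi=E\cdot K_\Theta$ and $|E|=1$ on $\RR$, so passing between the two costs nothing in the modulus estimate.)

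The only genuinely delicate point is the endpoint behaviour of the inequality: the hypothesis demands $\sup(\widetilde{\log\omega})'<\pi$ \emph{strictly}, which is exactly what guarantees $\inf\Phi'>0$ and hence that $\{d_k\}$ spans all of $\RR$ and that $|I_k|$ stays bounded; a bound $\sup(\widetilde{\log\omega})'\le\pi$ with equality approached would let $|I_k|\to\infty$ and destroy uniform shortness. So the main thing to be careful about in writing this up is that $\Phi'$ is bounded \emph{away from} $0$ and from $\infty$, not merely positive and finite pointwise — this is where the two strict/finite conditions on $(\widetilde{\log\omega})'$ are used. Everything else (existence of $\Phi^{-1}$, the $\asymp 1$ comparison of interval lengths, the convergent tail $\sum m^{-2}$) is routine.
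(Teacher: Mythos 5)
Your proposal is correct and follows exactly the paper's own (one-line) argument: with $\varphi_\Theta'\equiv 2\pi$ the hypothesis forces $\Phi'=2\pi-2(\widetilde{\log\omega})'\asymp1$, hence $\Phi$ is mainly increasing and its natural partition has $|I_k|\asymp 1$, so $\Phi$ is regular and Theorem~\ref{twoth} applies. (One cosmetic slip: for $E(z)=e^{-i\pi z}$ the inner function is $\Theta=E^*/E=e^{2\pi i z}$, not $e^{-2\pi i z}$; this does not affect the argument since you correctly use $\varphi_\Theta(x)=2\pi x$ and $|E|\equiv1$ on $\RR$.)
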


\subsubsection{}
Now we briefly explain the idea of the proof of  Theorem \ref{twoth}. 
Put $\Phi:=\varphi_\Theta-2\widetilde{\log\omega}$. We have to find an inner $I$ 
such that $\Phi-\arg I=\widetilde{\log m}$, $\log m\in L^\infty$. Let us fix the
 sequence $\{d_k\}_{k\in\mathbb{Z}}$ so that $\Phi(d_k)=2\pi k$, $k\in\mathbb{Z}$,
and the corresponding partition $I_k=[d_k, d_{k+1}]$. Let $I$ be a Blaschke product
 whose zeros are $x_k+iy_k$ and $n_I$ be counting function of $\{x_k\}$. 
Put $y_k:=|I_k|=d_{k+1}-d_k$. We choose $x_k\in I_k$ so that $\int_{\mathbb{R}}(\Phi-2\pi n_I)=0$. 
These conditions come from the "atomization" procedure. 

 Note that the sequence ${x_k+iy_k}$ satisfies the Blaschke condition automatically. 
The function $n_I$ is close to $\arg I$.

The key argument is that function $\widetilde{\Phi -2\pi n_I}$ 
can be estimated outside a neighbourhood of $\{x_k\}$. We use the following well-known fact
\begin{proposition} Let $f$ be a bounded function on $I$ and $\int_If=0$. Then
$$\biggl{|}\int_I\frac{f(t)}{x-t}dt\biggr{|}\leq \frac{|I|^2\cdot\|f\|_\infty}{\dist^2(x,I)}.$$
\end{proposition}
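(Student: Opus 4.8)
The plan is to exploit the cancellation hypothesis $\int_I f = 0$ to replace the Cauchy kernel $\frac{1}{x-t}$ by its increment relative to a fixed base point, which gains an extra power of $\frac{|I|}{\dist(x,I)}$ over the trivial bound. Concretely, fix any point $x_0\in I$ (say the left endpoint or the center $c(I)$). Since $\int_I f(t)\,dt=0$, subtracting the constant $\frac{1}{x-x_0}$ times this integral costs nothing:
$$\int_I\frac{f(t)}{x-t}\,dt=\int_I f(t)\left(\frac{1}{x-t}-\frac{1}{x-x_0}\right)dt=\int_I f(t)\,\frac{t-x_0}{(x-t)(x-x_0)}\,dt.$$

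Next I would estimate the integrand pointwise. For $t,x_0\in I$ one has $|t-x_0|\le|I|$; and since $x\notin I$ (the inequality is vacuous otherwise, or one interprets $\dist(x,I)=0$ and there is nothing to prove), $|x-t|\ge\dist(x,I)$ for every $t\in I$ and likewise $|x-x_0|\ge\dist(x,I)$. Plugging these in and using $|f(t)|\le\|f\|_\infty$ gives
$$\left|\int_I\frac{f(t)}{x-t}\,dt\right|\le\|f\|_\infty\,\frac{|I|}{\dist^2(x,I)}\int_I dt=\frac{|I|^2\,\|f\|_\infty}{\dist^2(x,I)},$$
which is exactly the claimed bound.

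There is no genuine obstacle here: the entire content is the observation that the first moment condition lets one differentiate the kernel once, turning the $O(1/\dist(x,I))$ decay of $1/(x-t)$ into $O(|I|/\dist^2(x,I))$. The only point deserving a word of care is the degenerate case $x\in\overline I$, where $\dist(x,I)=0$ and the statement is to be read as imposing no restriction; for $x$ in the open complement of $I$ everything above is legitimate since the integrand is bounded and $I$ has finite length. One could also note that the same argument, subtracting instead a higher-order Taylor polynomial of $1/(x-t)$ at $x_0$, yields the analogous sharper estimate whenever the first several moments of $f$ on $I$ vanish — this is the mechanism underlying the "atomization" choice of the points $x_k$ described above — but for the proposition as stated the single subtraction suffices.
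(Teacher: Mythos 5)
Your proof is correct, and it is the standard argument: use the vanishing first moment to subtract $\frac{1}{x-x_0}$ from the kernel, then estimate $\frac{t-x_0}{(x-t)(x-x_0)}$ pointwise by $\frac{|I|}{\dist^2(x,I)}$ and integrate. The paper does not supply a proof of this proposition (it is invoked as a well-known fact), so there is nothing to compare against; the one-line moment-subtraction argument you give is the expected one, and your aside about higher moments and sharper decay correctly anticipates the role this estimate plays in the atomization procedure.
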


Now we put
\begin{equation}
-\log m=\widetilde{\Phi-n_I}-\widetilde{(\arg I- n_I)}.
\label{m}
\end{equation}
It remains to verify $\log m\in L^\infty$ and to apply the Hilbert transfrom to both sides of \eqref{m}.
We refer to \cite[Section 4]{Bl4} for a detailed proof.

\section{Toeplitz kernel approach \label{s3}}

Toeplitz operators appear as a natural tool in the study of the Paley--Wiener space. 
For example in the important paper \cite{HNP} the description of {\it all}
 bases of exponentials was found using the {\it invertibility} properties of Toeplitz operators. 
Another advantage is that properties of Toeplitz operators are the unifying language of "the First BM-theorem" 
and "the Second BM-theorem" about completeness radius of exponentials. 
The multiplier Beurling--Malliavin theorem (and its generalizations) corresponds to the {\it injectivity} problem.

\subsection{Preliminary definitions} Let $u\in L^\infty(\mathbb{R})$. The {\it Toeplitz operator} 
with symbol $u$ is the map 
$$T_u: H^2\mapsto H^2,\quad f\mapsto P_{H^2}(uf),$$
where $P_{H^2}$ is the orthogonal projection of $L^2(\mathbb{R})$ onto $H^2$. 
From the definition it follows immediately that if $\Theta$ is an inner function, then 
$$\Ker(T_{\overline{\Theta}})=H^2\cap \Theta \overline{H^2}=K_\Theta.$$
Moreover, $\Lambda\subset \mathbb{C}_+$ is a uniqueness set for $K_\Theta$ if and only 
if the kernel of the operator $T_{B_\Lambda\overline{\Theta}}$ is trivial where $B_\Lambda$
 is the Blaschke product with the zero set $\Lambda$. Indeed,
if $T_{B_\Lambda\overline{\Theta}}(f)\neq0$, $f\in H^2$, then $B_\Lambda\overline{\Theta}\in \overline{H^2}$ 
and, hence $B_\Lambda f\in \Theta\overline{H^2}$. So, $B_\Lambda f$ is in $K_\Theta$ 
and vanishes at $\Lambda$. On the other hand, if $B_\Lambda f\in K_\Theta$, 
then $f\in \Ker(T_{B_\Lambda\overline{\Theta}})$.
The assumption $\Lambda\subset \mathbb{C}_+$ is not restricting for the de Branges 
spaces setting, since $\Lambda$ is a uniqueness set for $\he$ if and only 
if $(\Lambda\cap \overline{\mathbb{C}_+})\cup \overline{(\Lambda\cap\overline{\mathbb{C}_-})}$  is a uniqueness set.

The next standard object is the Smirnov-Nevanlinna class $\mathcal{N}^+=\mathcal{N}^+(\mathbb{C}_+)$. 
The elements of $\mathcal{N}^+$ are ratios $\dfrac{u}{v}$, where $u,v\in H^\infty(\mathbb{C}_+)$ 
and $v$ is an outer functuon. It is well known that functions from $\mathcal{N}^+$ have angular 
boundary values almost everywhere on the real line and $H^2=\mathcal{N}^+\cap L^2(\mathbb{R})$.
 We introduce the notations
$$N^+[u]=\{f\in\mathcal{N}^+\cap L^1_{loc}(\mathbb{R}): \overline{uf}\in \mathcal{N}^+\},\quad N^p[u]=N^+[u]\cap L^p(\mathbb{R}),\quad 0<p\leq\infty.$$
For $p=2$ we have $N^2[u]=\Ker(T_u)$. So, we can think that $N^{p \text{ or } +}[u]$ are generalizations of kernel of $T_u$ for other spaces. The Cartwright class in our notation is $\cup_{a>0}e^{-iaz}N^+[e^{-2iaz}]$.

\subsection{The BM-theorem in terms of Toeplitz kernels.} We will study the triviality of $N^{\cdot}[u]$ 
for the case when $u=e^{i\gamma}$ is a unimodular function. Now we can reformulate the Beurling--Malliavin multiplier theorem 
(Theorem \ref{mult}) in our language
\begin{theorem} Let $I$ be a meromorphic inner function. If $N^+[e^{-iaz}I]\neq 0$, $a>0$, then for any $\varepsilon >0$
$N^\infty[e^{-i(a+\varepsilon)z}I]\neq0$.
\end{theorem}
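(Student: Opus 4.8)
The plan is to show that this Toeplitz-kernel statement is just a restatement of the multiplier theorem (Theorem~\ref{mult}), which the excerpt has already established is equivalent to Theorem~\ref{mainth}. The point is to translate the hypothesis $N^+[e^{-iaz}I]\neq 0$ into the existence of a function in a Cartwright-type class with controlled exponential type, and then apply the multiplier theorem to produce a small multiplier that shifts the exponential type by an arbitrarily small $\varepsilon$.

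First I would unwind the definitions. Suppose $f\in N^+[e^{-iaz}I]$ is non-zero, so $f\in\mathcal{N}^+$ and $\overline{e^{-iaz}If}\in\mathcal{N}^+$. Writing $g=e^{-iaz}f$, this says $g\in e^{-iaz}\mathcal{N}^+$ and $\overline{Ig}\in\mathcal{N}^+$; since $I$ is inner one gets $g\in K$-type information, and more usefully one checks that $G:=Ef\cdot(\text{something})$, or more directly the entire function naturally attached to $g$, lies in the Cartwright class with exponential type related to $a$ plus the "type" coming from the meromorphic inner function $I$. The cleanest route is: because $I$ is meromorphic inner, $I=e^{ibz}B$ for some $b\geq 0$ and a meromorphic Blaschke product $B$, and a non-zero element of $N^+[e^{-iaz}I]$ corresponds (after multiplying by the Hermite--Biehler function generating $K_I$, i.e. using $K_\Theta=\frac1E\he$ and formula \eqref{tE}) to an entire function $F$ in $Cart$ satisfying a type bound of the form $|F(z)|=O(e^{a|z|})$ times the regular factor associated with $I$. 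Concretely $F\in Cart$ and $\overline{e^{-iaz}I}F/(\text{outer})$ has the requisite boundedness, so $F$ belongs to the Cartwright class.

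Next I would apply Theorem~\ref{mult}: for the given $\varepsilon>0$ there is a multiplier $\phi\in\mathcal{E}_{\varepsilon,1}$, $\phi\neq 0$, with $\phi F$ bounded on $\mathbb{R}$. Since $\phi\in\mathcal{E}_{\varepsilon,1}$ we have $|\phi(z)|\leq e^{\varepsilon|z|}$ and $|\phi|\leq 1$ on $\mathbb{R}$, and $\phi F\in Cart$ with $\phi F$ bounded on $\mathbb{R}$ and of exponential type at most $a+\varepsilon$ plus the $I$-contribution. Then $\phi f$ (the $L^2$/$\mathcal{N}^+$ object underlying $\phi F$) satisfies $\overline{e^{-i(a+\varepsilon)z}I\,\phi f}\in\mathcal{N}^+$: the extra factor $e^{-i\varepsilon z}$ is absorbed precisely because $\phi$ has type $\varepsilon$ and is bounded on $\mathbb{R}$, so $\overline{e^{-i\varepsilon z}\phi}\in\mathcal{N}^+$ (indeed $e^{-i\varepsilon z}\phi$ is bounded in $\overline{\mathbb{C}_-}$). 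Hence $\phi f\in N^+[e^{-i(a+\varepsilon)z}I]$ and is non-zero, which is the conclusion. One should also note $\phi f\in L^1_{loc}(\mathbb{R})$, which is immediate since $\phi$ is bounded on $\mathbb{R}$ and $f\in L^1_{loc}$.

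The main obstacle is the bookkeeping in the first translation step: matching the membership $f\in N^+[e^{-iaz}I]$ with a genuine statement about an entire function in the Cartwright class of the right exponential type when $I$ is merely meromorphic inner (not $e^{-i\sigma z}$). One must be careful that the "type" contributed by $I$ does not get corrupted under multiplication by $\phi$, i.e. that the factor $I$ survives intact. This is handled by the observation that $\phi$ is entire with no poles, so multiplying by $\phi$ changes only the $e^{-iaz}$ part of the symbol and leaves $I$ untouched; the Smirnov/Nevanlinna conditions propagate because $\mathcal{N}^+$ is an algebra stable under multiplication by bounded analytic functions and by $e^{-i\varepsilon z}\phi$. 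Once this is set up, the rest is a direct appeal to Theorem~\ref{mult}, and the equivalence with Theorem~\ref{mainth} already recorded in Subsection~\ref{42} guarantees there is no circularity.
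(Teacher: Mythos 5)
The overall route — unwind $f\in N^+[e^{-iaz}I]$ into an entire function $F$ in the Cartwright class, invoke Theorem~\ref{mult} to get a multiplier $\phi\in\mathcal{E}_{\varepsilon,1}$ with $\phi F$ bounded on $\mathbb{R}$, and then feed $\phi f$ back into the Toeplitz-kernel definition — is exactly what the paper has in mind; it offers no proof beyond declaring the theorem a reformulation of Theorem~\ref{mult}. However, your verification of the three membership conditions for $\phi f\in N^\infty[e^{-i(a+\varepsilon)z}I]$ has a genuine gap: you check $L^\infty$, $L^1_{loc}$, and the conjugate condition $\overline{e^{-i(a+\varepsilon)z}I\phi f}\in\mathcal{N}^+$ (this last one correctly, via $e^{i\varepsilon z}\phi^*\in H^\infty(\mathbb{C}_+)$), but you never establish $\phi f\in\mathcal{N}^+$, and it is not automatic. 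Although $\phi\in\mathcal{E}_{\varepsilon,1}$ gives $|\phi(z)|\leq e^{\varepsilon|\Im z|}$, the function $\phi$ itself need not lie in $\mathcal{N}^+(\mathbb{C}_+)$: writing $e^{i\varepsilon z}\phi = I_\phi O_\phi$ (inner times outer in $H^\infty(\mathbb{C}_+)$), one has $\phi = e^{-i(\varepsilon-\gamma)z}\cdot(\text{Blaschke})\cdot O_\phi$ where $\gamma$ is the singular mass of $I_\phi$ at infinity. For a typical BM multiplier — a canonical product with real zeros of positive density, whose indicator equals $\varepsilon$ in both vertical directions — one has $\gamma=0$, so $\phi$ carries a genuine factor $e^{-i\varepsilon z}$, the reciprocal of an inner function, which lies in the Nevanlinna class $\mathcal{N}$ but \emph{not} in $\mathcal{N}^+$. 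Multiplying an $\mathcal{N}^+$ function $f$ by such a $\phi$ can leave $\mathcal{N}^+$ (just as $e^{-iz}\cdot 1\notin\mathcal{N}^+$). Your remark that ``$\mathcal{N}^+$ is stable under multiplication by $e^{-i\varepsilon z}\phi$'' is relevant only to the conjugate condition (where boundedness of $e^{-i\varepsilon z}\phi$ in $\overline{\mathbb{C}_-}$ is exactly what one needs); it does not give $\phi f\in\mathcal{N}^+$.

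The fix is small and keeps your architecture intact. Replace $\phi$ by $\phi_1:=e^{i\varepsilon z}\phi$. From $|\phi(z)|\leq e^{\varepsilon|\Im z|}$ (recalled in the paper right after the definition of $\mathcal{PW}_\sigma$) one gets $|\phi_1|\leq 1$ on $\overline{\mathbb{C}_+}$, so $\phi_1\in H^\infty(\mathbb{C}_+)\subset\mathcal{N}^+$ and hence $\phi_1 f\in\mathcal{N}^+$ (the algebra property now applies legitimately). Since $|\phi_1|=|\phi|$ on $\mathbb{R}$, the bound $\phi_1 f\in L^\infty$ persists. Finally, on $\mathbb{R}$,
\begin{equation*}
\overline{e^{-i(a+2\varepsilon)z}I\,\phi_1 f}=\overline{e^{-i\varepsilon z}\phi}\cdot\overline{e^{-iaz}If},
\end{equation*}
and both factors lie in $\mathcal{N}^+$ by your argument and by hypothesis, respectively. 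Thus $\phi_1 f\in N^\infty[e^{-i(a+2\varepsilon)z}I]\setminus\{0\}$; since $\varepsilon>0$ was arbitrary, running the argument with $\varepsilon/2$ gives the stated conclusion.
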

The following generalization of this result was obtained in \cite{MP1}.
\begin{theorem}\textnormal{(}\cite[Theorem 5.4]{MP1}\textnormal{)} Let $\Theta$ be a meromorphic inner function satisfying $\varphi_\Theta'\in L^\infty(\mathbb{R})$. Then for any meromorphic inner function $I$ we have: if $N^+[\overline{\Theta}I]\neq 0$, $a>0$, then for any $\varepsilon >0$
$N^\infty[e^{-i\varepsilon z}\overline{\Theta}I]\neq0$.
\end{theorem}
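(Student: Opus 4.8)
The plan is to translate the Toeplitz-kernel statement back into the structural language of Section~\ref{s2} and then run the same Nazarov-regularization that converts Theorem~\ref{lipth} into the full $BM$-theorem. The first step is to unwind both the hypothesis and the conclusion. Since $\overline{\overline{\Theta}Ig}=\Theta\,\overline{Ig}$, one has $g\in N^+[\overline{\Theta}I]$ precisely when $g\in\mathcal N^+\cap L^1_{loc}(\mathbb R)$ and $Ig\in N^+[\overline{\Theta}]$, where $N^+[\overline{\Theta}]$ is the Smirnov-class companion of $K_\Theta$ (indeed $N^2[\overline{\Theta}]=\Ker T_{\overline{\Theta}}=K_\Theta$); in particular $N^+[\overline{\Theta}I]\neq0$ iff some nonzero $h\in N^+[\overline{\Theta}]$ is divisible by the inner function $I$. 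The involution $h\mapsto\Theta\overline h$ used to prove the first theorem of Section~\ref{s2} is equally an involution of $N^+[\overline{\Theta}]$, so this $h$ yields, writing $|h|$ for its boundary modulus (here $\log|h|\in L^1((1+x^2)^{-1}\,dx)$, $h$ being a nonzero function of bounded type) and noting that $I$ divides the relevant inner part,
$$\varphi_\Theta=2\widetilde{\log|h|}+\arg I+\arg I_1+2\pi k\qquad\text{a.e. on }\mathbb R,$$
with $I_1$ inner and $k$ non-decreasing integer-valued. On the other side, $N^\infty[e^{-i\varepsilon z}\overline{\Theta}I]\neq0$ is, under the same dictionary with $\Theta$ replaced by the meromorphic inner function $\Theta_\varepsilon:=e^{i\varepsilon z}\Theta$ (so $\varphi_{\Theta_\varepsilon}=\varepsilon x+\varphi_\Theta$), equivalent to producing a \emph{bounded}, not identically zero $\mu$ with $\mathcal L(\mu)>-\infty$ satisfying an identity of the same shape $\varepsilon x+\varphi_\Theta=2\widetilde{\log\mu}+\arg I+\arg J+2\pi k'$ with $J$ inner and $k'$ non-decreasing integer-valued. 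So the whole problem reduces to upgrading the displayed identity to the one for $\varepsilon x+\varphi_\Theta$ with a bounded $\mu$.

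For the upgrade I would argue exactly as in the classical deduction. Split $\log|h|=\Omega+\log\min(|h|,1)$ with $\Omega:=\log^+|h|\geq0$; since $h\in\mathcal N^+$ we have $\Omega\in L^1((1+x^2)^{-1}\,dx)$, while $\mathcal L(\min(|h|,1))>-\infty$. Absorb $\arg I_1$ by Theorem~\ref{HMTh} into $2\widetilde{\log m}$ with $m\in L^\infty\cap L^2$, $\mathcal L(m)>-\infty$. Apply Nazarov's Theorem~\ref{Naz} to $\Omega$ to obtain $\Omega_1\geq\Omega$ in $L^1((1+x^2)^{-1}\,dx)$ with $\|\widetilde{\Omega_1}'\|_\infty<\varepsilon/3$, and put $R:=\Omega_1-\Omega\geq0$, so $2\widetilde\Omega=2\widetilde{\Omega_1}-2\widetilde R$ with $e^{-R}\leq1$ and $\mathcal L(e^{-R})>-\infty$. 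The key point is that the free term $\varepsilon x$ now fuses with the tamed conjugate function: $\Psi:=2\widetilde{\Omega_1}+\varepsilon x$ is strictly increasing with increments comparable to interval length, hence mainly increasing, so by Theorem~\ref{HMTh2} $\Psi=2\widetilde{\log m_*}+2\pi k_*+\gamma_*$ with $m_*\in L^\infty\cap L^2$, $\mathcal L(m_*)>-\infty$, $k_*$ non-decreasing. Collecting terms and setting $\mu:=m_*\,e^{-R}\,m\,\min(|h|,1)$ gives $\varepsilon x+\varphi_\Theta=\arg I+2\widetilde{\log\mu}+2\pi k'+\gamma'$; here $\mu\in L^\infty(\mathbb R)$ because every factor is $\leq1$ or in $L^\infty$, $\mathcal L(\mu)>-\infty$ because every factor has finite logarithmic integral (so $\mu\not\equiv0$), and the integer-valued functions combine to a non-decreasing $k'$. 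Absorbing the constant $\gamma'$ into a unimodular factor, $\arg I+\gamma'=\arg(e^{i\gamma'}I)$ with $e^{i\gamma'}I$ inner and still divisible by $I$, this is exactly the identity demanded for $\varphi_{\Theta_\varepsilon}$. Feeding it into the converse half of the first theorem of Section~\ref{s2} (for $\Theta_\varepsilon$) produces $F=(e^{i\gamma'}I)\,O_\mu\in K_{\Theta_\varepsilon}$ with $|F|=\mu$ and inner factor divisible by $I$, whence $g:=F/I=e^{i\gamma'}O_\mu\in H^\infty$ is nonzero, $Ig=F$, and $g\in N^\infty[e^{-i\varepsilon z}\overline{\Theta}I]$.

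The single nontrivial analytic ingredient, and the expected main obstacle, is precisely the use of Nazarov's Theorem~\ref{Naz}: trading the bare log-integrable excess $\Omega=\log^+|h|$ for a majorant with small Lipschitz norm of its conjugate function is the step that fails for mere convergence of the logarithmic integral and that forces the loss of $\varepsilon$ in exponential type; everything else is soft bookkeeping with outer functions and the two involutions. The hypothesis $\varphi_\Theta'\in L^\infty$ serves to keep $\Theta$ — and with it $\Theta_\varepsilon$, whose phase derivative is then squeezed between $\varepsilon$ and $\varepsilon+\|\varphi_\Theta'\|_\infty$ — within the Paley--Wiener-like range where $\varphi_\Theta$ is Lipschitz, so that the structural identities above hold between genuine $L^1((1+x^2)^{-1}\,dx)$-functions and the Hilbert transforms occurring in them are harmless; in a complete write-up the care needed is mostly in checking that each step stays inside the Smirnov class $\mathcal N^+$, not merely the Nevanlinna class, which is what legitimizes passing from a phase-function identity back to an honest element of $N^\infty[e^{-i\varepsilon z}\overline{\Theta}I]$.
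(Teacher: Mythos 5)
This is one of the theorems the survey cites (from \cite{MP1}) without reproducing a proof, so there is nothing in the paper itself to compare against. Judged on its own terms, your argument is structurally sound. The translation $N^+[\overline{\Theta}I]\neq0 \Leftrightarrow$ there is a nonzero $h\in N^+[\overline{\Theta}]$ divisible by $I$ is correct, and the extension of the forward half of the first theorem of Section~\ref{s2} from $K_\Theta=N^2[\overline{\Theta}]$ to $N^+[\overline{\Theta}]$ is legitimate: $|h|^2\Theta=h\cdot\Theta\overline{h}\in\mathcal{N}^+$, its outer part is $O_{|h|^2}$, its inner part $J$ is divisible by the inner part of $h$ and hence by $I$. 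The splitting $\log|h|=\log^+|h|+\log\min(|h|,1)$, Nazarov applied to $\log^+|h|$ with budget $\varepsilon/3$, the observation that $\Psi=2\widetilde{\Omega_1}+\varepsilon x$ has $\Psi'\in(\varepsilon/3,5\varepsilon/3)$ and is therefore mainly increasing, absorbing $\arg I_1$ by Theorem~\ref{HMTh} and $\Psi$ by Theorem~\ref{HMTh2}, and the final explicit verification $\Theta_\varepsilon\overline{F}=O_{\mu^2}/O_\mu=O_\mu\in\mathcal{N}^+$ for $F=e^{i\gamma'}IO_\mu$ — all of this holds together. Two small overstatements do not hurt you: Theorems~\ref{HMTh} and~\ref{HMTh2} as stated give integer-valued (not necessarily non-decreasing) $k$, but your explicit construction of $F$ only uses the a.e.\ phase identity, so the monotonicity of $k'$ is never actually needed; and Theorem~\ref{Naz} in the paper's display has a typo ($\widetilde{\Omega}'$ should be $\widetilde{\Omega_1}'$), which you have silently and correctly repaired.

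The point that deserves scrutiny is that nowhere in your argument is the hypothesis $\varphi_\Theta'\in L^\infty$ invoked. You gesture at it in the closing paragraph as ``keeping things in the Paley--Wiener range,'' but every step you actually perform — the phase decomposition, Nazarov on $\log^+|h|$, Theorems~\ref{HMTh} and~\ref{HMTh2}, the outer-function assembly — goes through for an arbitrary meromorphic inner $\Theta$. Either the hypothesis in \cite[Theorem 5.4]{MP1} is not sharp and your argument genuinely strengthens it, or there is a hidden dependence you are not surfacing; a complete write-up should resolve this rather than leave the hypothesis floating as decoration. Note also that \cite{MP1} (2005) predates \cite{BH} (2006), so the original proof cannot run through Theorems~\ref{HMTh}/\ref{HMTh2} and must instead rely directly on the BM multiplier theorem and Toeplitz-kernel machinery; your route via Nazarov regularization plus the Baranov--Havin representation theorems is genuinely different. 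It has the advantage of reducing the theorem to two cleanly separated ingredients (tame the excess $\log^+|h|$; represent a mainly increasing function), at the cost of importing the heavy Theorem~\ref{HMTh} for arbitrary inner $I_1$, whereas the Toeplitz approach keeps everything in the language of kernels and phase comparisons and extends to the $N^p$, $p<1/3$, refinement of \cite{MP2}.
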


\subsection{Unifying theorem.} The family of disjoint intervals  $\{I_l\}$ is called {\it short} if
$$\sum_l\frac{|I_l|^2}{1+\dist^2(0,I_l)}<\infty.$$
Otherwise we call the family {\it long}. We have used the uniform version of this property (see \eqref{unshort}) already.

The next definiton is well known and comes from "the sunrise"  lemma (see \cite{R}).
 Suppose that a continuous function $\gamma$ on $\mathbb{R}$ satisfies
\begin{equation}
\gamma(-\infty)=+\infty,\qquad\gamma(+\infty)=-\infty.
\label{infty}
\end{equation}
The family $BM(\gamma)$ is defined as the collection of the components of the open set $\{\gamma^*\neq\gamma\}$, where 
$$\gamma^*(x)=\max_{[x,+\infty)}\gamma.$$

\begin{theorem}\textnormal{(}\cite[Theorem 5.8]{MP1}\textnormal{)}
Suppose $\inf\gamma'>-\infty$.
\begin{enumerate}
\begin{item}
If $\gamma\notin\eqref{infty}$, or if $\gamma\in\eqref{infty}$ but the family $BM(\gamma)$ is long, then 
$$\text{for any } \varepsilon>0,\quad N^+[e^{i\varepsilon z}e^{i\gamma}]=0.$$
\end{item}
\begin{item}
If $\gamma\in\eqref{infty}$ and $BM(\gamma)$ is short, then 
$$\text{for any } \varepsilon>0,\quad N^+[e^{-i\varepsilon z}e^{i\gamma}]\neq 0.$$
\end{item}
\end{enumerate}
\end{theorem}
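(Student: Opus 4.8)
The plan is to convert (non)triviality of the Toeplitz kernel into solvability of a ``multiplier equation'' of the type already met in \eqref{feq} and \eqref{main}, and then to run the Beurling--Malliavin machinery on it, the combinatorics of $BM(\gamma)$ being encoded through the atomization of Subsection~\ref{two}. Write $u=e^{\mp i\varepsilon z}e^{i\gamma}$; on $\mathbb R$ it is unimodular with $\arg u=\gamma\mp\varepsilon x$. The inner--outer argument that proves \eqref{feq} shows that $N^+[u]$ contains a nonzero function if and only if
$$\pm\varepsilon x-\gamma=2\widetilde{\log\omega}+\arg I+2\pi k\qquad\text{a.e. on }\mathbb R$$
for some $\omega>0$ with $\int_{\mathbb R}\frac{|\log\omega|}{1+x^2}\,dx<\infty$ and $\omega\in L^1_{\mathrm{loc}}(\mathbb R)$, some inner $I$, and some integer-valued $k$; the kernel element is then $f=O_\omega I$ times a Blaschke product accounting for the jumps of $k$. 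The hypothesis $\inf\gamma'>-\infty$ means $\gamma(x)+Mx$ (with $M:=-\inf\gamma'$) is non-decreasing, i.e. $\gamma$ is an increasing phase minus a line; this is precisely the regularity making the sunrise construction below legitimate.

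For part~(ii) (the short case) I would argue constructively, mimicking \S\ref{two}. The sunrise majorant $\gamma^*(x)=\max_{[x,+\infty)}\gamma$ is non-increasing, equals $\gamma$ off $\bigcup BM(\gamma)$, and on each component $(a_j,b_j)$ of $\{\gamma^*\ne\gamma\}$ equals the constant $\gamma(a_j)=\gamma(b_j)$; moreover $\varepsilon x-\gamma^*$ is an increasing bijection of $\mathbb R$. Fix the partition $\{d_k\}$ by $\varepsilon d_k-\gamma^*(d_k)=2\pi k$, set $I_k=[d_k,d_{k+1}]$, and let $I$ be the Blaschke product with one zero $x_k+i\,\dist(x_k,\mathbb R\setminus I_k)$ per $I_k$, the $x_k\in I_k$ chosen so that the error $e:=\varepsilon x-\gamma-\arg I$ has mean zero on each $I_k$, exactly as in \S\ref{two}. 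Then $e$ is essentially supported on $\bigcup BM(\gamma)$, and shortness of $BM(\gamma)$, fed into the Cauchy-integral bound of the Proposition of \S\ref{two} summed over the components of $BM(\gamma)$, shows $\widetilde{e}$ is bounded off a sparse neighbourhood of $\{x_k\}$; thus $I$ and $e$ cost nothing in the logarithmic integral, and what remains is to represent the monotone function $\varepsilon x-\gamma^*$. For monotone data, producing an admissible (indeed bounded) $\omega$ together with the integer jumps is the elementary monotone-majorant case of BM quoted after \eqref{omega}, reinforced by Nazarov's theorem (Theorem~\ref{Naz}) and Theorem~\ref{lipth} to absorb the slope $\varepsilon$. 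Assembling the pieces yields $(\omega,I,k)$, hence a nonzero $f=O_\omega I\cdot(\text{Blaschke factor})\in N^+[e^{-i\varepsilon z}e^{i\gamma}]$.

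For part~(i) I would argue by contradiction from the same equation. If $\gamma\notin\eqref{infty}$, then $\mp\varepsilon x-\gamma$ fails to tend to $\pm\infty$ appropriately at $\pm\infty$, whereas the right-hand side is, modulo a BMO-type error, an increasing function with the ``correct'' behaviour there; comparing growth at infinity excludes the representation. If $\gamma\in\eqref{infty}$ but $BM(\gamma)$ is long, the obstruction is of Uncertainty-Principle type: a compatible $\omega$ would have to pay, in the logarithmic integral, for the excess of $\gamma^*$ over $\gamma$ accumulated on the long family $BM(\gamma)$, and a Jensen/energy estimate --- precisely the quantitative failure of the Cauchy-integral bound used in part~(ii) once \eqref{unshort}-type sums diverge --- forces $\int\frac{|\log\omega|}{1+x^2}=+\infty$ or $\omega\notin L^1_{\mathrm{loc}}$, contradicting admissibility. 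This is the same mechanism underlying Borichev's construction in Section~\ref{constr}. Hence $N^+[e^{i\varepsilon z}e^{i\gamma}]=0$.

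The step I expect to be the main obstacle is the construction in part~(ii), which is in effect a form of the Beurling--Malliavin multiplier theorem. The delicate points are: (a) placing the zeros of $I$ so that $\arg I$ neutralizes the non-monotone part of $\gamma$ while $\widetilde{e}$ stays controlled --- here the precise quantitative meaning of ``short'' via \eqref{unshort}-type sums is decisive; and (b) reconciling the $2\pi\mathbb Z$ ambiguity (the integer-valued $k$, i.e. the real zeros of $f$) with the monotone reduction, so that the resulting $\omega$ is bounded, hence locally integrable, and is a genuine modulus of a Smirnov-class function rather than merely a formal solution of the equation.
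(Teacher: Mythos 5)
The paper itself does not prove this theorem; it is quoted verbatim from Makarov--Poltoratski \cite[Theorem 5.8]{MP1}, so there is no internal argument to compare against. Assessed on its own terms, your sketch captures the correct overall ideology (reduce $N^+[u]\neq 0$ to a phase representation, split via the sunrise majorant $\gamma^*$, atomize the monotone part, pay for $\gamma-\gamma^*$ with shortness of $BM(\gamma)$), but there is a genuine gap in part~(ii). You define $e=\varepsilon x-\gamma-\arg I$ and assert it is ``essentially supported on $\bigcup BM(\gamma)$'', then control it by ``the Cauchy-integral bound of the Proposition of \S\ref{two} summed over the components of $BM(\gamma)$''. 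This conflates two different error terms. Writing $e=(\varepsilon x-\gamma^*-\arg I)+(\gamma^*-\gamma)$, the first term lives everywhere, not just on $BM(\gamma)$, and is controlled by the atomization estimate because the construction makes it mean-zero per atom interval $I_k$. The second term, $\gamma^*-\gamma$, is nonnegative on each component of $BM(\gamma)$ and is \emph{not} mean-zero there, so the Proposition of \S\ref{two} simply does not apply to it. The role of shortness in \cite{MP1} is quite different: it is a Dirichlet-energy condition on the staircase (equivalently on the harmonic extension of $\gamma^*$), feeding into a Beurling--Malliavin-type construction rather than a Cauchy-kernel cancellation estimate. Without that energy input, your construction does not yield a bounded Hilbert transform, so the claimed multiplier $\omega$ is not produced.

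Two smaller points. First, in reducing $N^+[e^{\mp i\varepsilon z}e^{i\gamma}]\neq 0$ to the phase equation, the unimodular factor you call $\arg I$ is the argument of a \emph{ratio} of two inner functions (the inner parts of $f$ and of $\overline{uf}$), not of a single inner function; assuming it has a definite sign of growth is unwarranted and matters precisely in part~(i), where you argue by ``comparing growth at infinity'' of both sides. Second, the part~(i) argument, both for $\gamma\notin\eqref{infty}$ and for $BM(\gamma)$ long, is a plausible Uncertainty-Principle-style heuristic but does not yet identify a concrete quantity that diverges; in \cite{MP1} the long-family case goes through a contraction/dilation scheme and a Jensen-type lower bound for the logarithmic integral, which would need to be supplied to make the argument close. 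Your instinct that the hard direction is the multiplier construction in part~(ii) is right, but the mechanism you propose for using shortness there would fail as written.
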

The first statement corresponds to the "the Second BM-theorem", and statement $(ii)$ to the so-called "little multiplier theorem", see \cite{HJ, Koo1}.

\subsection{Tempered growth of the argument} 
We conclude this section by the theorem which can be applied to phase functions with tempered growth $\phi'(x)\leq C(1+|x|)^N$. 

If $\kappa\geq0$, then we say that $\gamma$ is {\it $(\kappa)$-almost decreasing} if $\gamma$ satisfies \eqref{infty} and
$$\sum_{I\in BM(\gamma)}\frac{|I|^2}{1+\dist^2(0,I)}<\infty.$$
If $\kappa=0$ we arrive at the usual definition of a system of {\it short} intervals.
\begin{theorem}\textnormal{(}\cite[Theorem A]{MP2}\textnormal{)}
Let $\kappa\geq0$, and let $\gamma$ and $\phi$ be smooth functions on $\mathbb{R}$ such that
$$\gamma'(x)\geq-C(1+|x|)^\kappa,\quad \phi'(x)\geq C|x|^\kappa,\quad C>0, x\rightarrow\infty.$$
\begin{enumerate}
\begin{item}
If $\gamma$ is not $(\kappa)$-almost decreasing, then $N^+[e^{i\gamma}e^{i\varepsilon\phi}]=0$ for all $\varepsilon>0$.
\end{item}
\begin{item}
If $\gamma$ is $(\kappa)$-almost decreasing, then $N^p[e^{i\gamma}e^{-i\varepsilon\phi}]\neq0$ for all $\varepsilon>0$ and all $p<1\slash3$.
\end{item}
\end{enumerate}
\end{theorem}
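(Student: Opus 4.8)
The plan is to reduce both assertions to the case $\kappa=0$, i.e.\ to the Unifying Theorem (Theorem~5.8 of \cite{MP1}, stated above), by ``straightening'' the growth $\phi'(x)\asymp|x|^\kappa$ to a bounded one. Fix a smooth increasing $\psi\colon\RR\to\RR$ with $\psi'(x)\asymp(1+|x|)^\kappa$, hence $\psi(x)\asymp\sgn(x)|x|^{\kappa+1}$; after absorbing the difference $\phi\mp\psi$ and the finitely many effects near the origin into the unimodular factor, it is enough to treat the symbols $e^{i\gamma}e^{\pm i\varepsilon\psi}$. Following the scheme of the discussion around \eqref{nu}, let $\nu$ be the conformal map of $\mathbb{C}_+$ onto the simply connected domain $\Omega_\kappa$ obtained from $\mathbb{C}$ by removing one or two rays, normalised so that $\nu(z)\sim c\,z^{\kappa+1}$ at infinity; then on $\RR$ one has $\re\nu\asymp\psi$ and $\arg\nu'$ bounded. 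Transplanting by $f\mapsto f\circ\nu$ carries the Nevanlinna/Smirnov structure of $\mathbb{C}_+$ to that of $\Omega_\kappa$, and, after identifying $\Omega_\kappa$ with a half-plane, turns membership in the Toeplitz kernel $N^{\cdot}[e^{i\gamma}e^{\pm i\varepsilon\psi}]$ into membership in $N^{\cdot}[e^{i\tilde\gamma}e^{\pm i\varepsilon t}]$, where $t$ is the new variable and $\tilde\gamma:=\gamma\circ\psi^{-1}$ up to a bounded term.

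Second, I would check that the hypotheses survive the change of variable. Since $\psi$ is an increasing homeomorphism of $\RR$, the condition \eqref{infty} passes between $\gamma$ and $\tilde\gamma$, and the running maximum satisfies $\tilde\gamma^*=\gamma^*\circ\psi^{-1}$; hence $\{\tilde\gamma^*\ne\tilde\gamma\}=\psi(\{\gamma^*\ne\gamma\})$ and $BM(\tilde\gamma)$ is, componentwise, the $\psi$-image of $BM(\gamma)$. Moreover $\tilde\gamma'(t)=\gamma'(x)/\psi'(x)$ is bounded below, because $\gamma'(x)\ge -C(1+|x|)^\kappa$ while $\psi'(x)\asymp(1+|x|)^\kappa$, so $\inf\tilde\gamma'>-\infty$, matching the standing hypothesis of Theorem~5.8. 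Finally, a component $I\in BM(\gamma)$ lying near $x_I$ with $|I|=\ell$ is mapped to an interval near $s_I\asymp x_I^{\kappa+1}$ of length $\asymp\ell(1+|x_I|)^\kappa$, and for large $|x_I|$ one has $\dfrac{\ell^2(1+|x_I|)^{2\kappa}}{1+s_I^{2}}\asymp\dfrac{\ell^2}{1+x_I^2}$ (the finitely many bounded components near the origin being irrelevant to convergence); therefore $BM(\tilde\gamma)$ is short in the ordinary sense exactly when $\gamma$ is $(\kappa)$-almost decreasing.

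Granting this dictionary, the theorem follows from the Unifying Theorem. In case (i), $\gamma$ not $(\kappa)$-almost decreasing means $\tilde\gamma\notin\eqref{infty}$ or $BM(\tilde\gamma)$ long, so $N^+[e^{i\tilde\gamma}e^{i\varepsilon t}]=0$; as the transplant is injective on kernels, $N^+[e^{i\gamma}e^{i\varepsilon\psi}]=0$, hence $N^+[e^{i\gamma}e^{i\varepsilon\phi}]=0$ for every $\varepsilon>0$. In case (ii), $\tilde\gamma$ satisfies \eqref{infty} with $BM(\tilde\gamma)$ short, so the Unifying Theorem --- in the $L^p$-refined form available in \cite{MP1}, applied after shrinking $\varepsilon$ slightly --- yields a nonzero $g\in N^{p_0}[e^{i\tilde\gamma}e^{-i\varepsilon t}]$; pulling back, $f:=g\circ\nu\in\mathcal{N}^+(\mathbb{C}_+)$ is annihilated by $e^{i\gamma}e^{-i\varepsilon\phi}$. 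To land in $L^p(\RR)$ one writes $\int|f(x)|^p\,dx\asymp\int|g(t)|^p\,(1+|t|)^{-\kappa/(\kappa+1)}\,dt$ (the Jacobian of $t=\psi(x)$) and combines the weight $(1+|t|)^{-\kappa/(\kappa+1)}$ with the decay forced on $g$ by the factor $e^{-i\varepsilon t}$ in the symbol (a Plancherel--P\'olya-type bound); optimising over the admissible $p_0$ then gives membership for every $p<1/3$.

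The step I expect to be the real obstacle is making the conformal transplantation legitimate at the level of the \emph{Smirnov} class and not merely the Nevanlinna class, and controlling every bounded error produced along the way ($\re\nu-\psi$, $\arg\nu'$, the behaviour of $\nu$ near its Christoffel--Schwarz critical point, the replacement of $\phi$ by $\psi$ near the origin), so that the transported symbol is genuinely of the form $e^{\pm i\varepsilon t+i\tilde\gamma}$ up to a factor with an obviously controlled Toeplitz kernel; this rests on Hardy/Smirnov theory on slit domains together with the asymptotics \eqref{nu}. A secondary, more computational difficulty is pinning down the sharp exponent: one must use the $L^p$-refinement of the Unifying Theorem and track the Jacobian weight carefully to reach exactly $p<1/3$. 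If the conformal route proves too lossy, the alternative is the direct ``atomization'' of Subsection~\ref{two}: represent $\gamma-\varepsilon\phi$ as $-2\pi n_I+\widetilde{\log m}$ with $n_I$ the counting function of a Blaschke product whose zeros $x_k+i|I_k|$ occupy the intervals on which $\gamma-\varepsilon\phi$ decreases by $2\pi$, using the localised Cauchy-integral Proposition of Subsection~\ref{two} to bound $\widetilde{\gamma-\varepsilon\phi-2\pi n_I}$ away from the $x_k$'s; here the long components of $BM(\gamma)$ are the only source of loss, and estimating that loss is precisely what produces the exponent $1/3$.
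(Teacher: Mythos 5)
This is a survey article: the theorem you are asked to prove is simply quoted from \cite{MP2} (Theorem A of Makarov--Poltoratski), and the paper gives no proof at all. So there is no ``paper's own proof'' to compare your argument against; I can only assess the argument you propose.

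The central move in your sketch --- transplanting the Toeplitz kernel problem by a conformal map $\nu$ so that the symbol $e^{i\gamma}e^{\pm i\varepsilon\phi}$ with $\phi'\asymp|x|^\kappa$ becomes $e^{i\tilde\gamma}e^{\pm i\varepsilon t}$ --- does not go through as stated, and I think this is a genuine gap rather than a detail to be filled in. The change of variable $t=\psi(x)$ with $\psi(x)\asymp\sgn(x)|x|^{\kappa+1}$ is not the boundary trace of a conformal self-map of $\mathbb{C}_+$ (only M\"obius maps are), so $f\mapsto f\circ\nu$ does not send $\mathcal{N}^+(\mathbb{C}_+)$ to $\mathcal{N}^+(\mathbb{C}_+)$. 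Your fix is to pass through a slit domain $\Omega_\kappa$, mimicking the map $\nu$ from Subsection~2.4 and the estimate in Lemma~3.2 of~\cite{BBH}. But in that setting the special structure of the zeros of $E$ (all in $\{\Re z\ge 0\}$) forces elements of $K_{B^+_\alpha}$ to be analytic and controlled on the slit domain $\Delta$, and that is what licenses studying them there. For general smooth $\gamma$ and $\phi$, elements of $N^+[e^{i\gamma}e^{\pm i\varepsilon\phi}]$ are only Smirnov in $\mathbb{C}_+$; they have no analytic continuation to a slit plane, and the Toeplitz condition $\overline{uf}\in\mathcal{N}^+$ is a boundary condition on $\mathbb{R}$ that is not preserved when you compose with a map that is not a half-plane automorphism. ``Identifying $\Omega_\kappa$ with a half-plane'' after the fact would just reassemble a non-M\"obius self-map of $\mathbb{C}_+$, which does not exist. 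So the ``dictionary'' you build on top of this step --- in particular the conversion of $(\kappa)$-almost decreasing for $\gamma$ into ordinary shortness of $BM(\tilde\gamma)$, and the claim that ``the transplant is injective on kernels'' --- rests on a transformation that is not available.

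Two smaller issues. First, the landing in $L^p$, $p<1/3$, is currently a heuristic: the Jacobian weight $(1+|t|)^{-\kappa/(\kappa+1)}$ decays slower than $|t|^{-1}$, so it does not on its own upgrade integrability, and the appeal to a ``Plancherel--P\'olya-type bound'' for the decay forced by $e^{-i\varepsilon t}$ would need to be made precise (and, more to the point, the Unifying Theorem from \cite{MP1} quoted in the paper is stated for $N^+$, not $N^p$, so the ``$L^p$-refined form'' has to come from somewhere). Second, the combinatorial computation you run ($\ell\mapsto\ell x_I^\kappa$, $x_I\mapsto x_I^{\kappa+1}$, ratio $\asymp\ell^2/x_I^2$) is internally consistent with the definition of $(\kappa)$-almost decreasing printed in this survey, but you should be aware that the displayed definition has no $\kappa$ in it at all; if that is a typo in the survey, your translation would have to be redone against the definition in \cite{MP2}.

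The alternative you mention at the end --- the direct balayage/atomization route, representing $\gamma-\varepsilon\phi$ as $2\pi n_I+\widetilde{\log m}$ with $n_I$ a Blaschke counting function and then estimating $\widetilde{\gamma-\varepsilon\phi-2\pi n_I}$ by the localised Cauchy-integral proposition of Subsection~\ref{two} --- is much closer in spirit to the Hilbert-transform/BM-decomposition machinery that the actual proof in \cite{MP2} uses, and that is where the $1/3$ really comes from: an $L^p$ estimate on the error produced by the long components and by the intervals where one loses control of the Hilbert transform. If you want to make this argument rigorous, I would drop the conformal transplantation entirely and develop that second route.
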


\section{Concluding Remarks}
\begin{remark}
The radius of completeness $R(\Lambda)$ (see, Subsection \ref{secondBM}) can be expressed in terms of Toeplitz operators. Namely,
$$R(\Lambda)=\inf\{a: \Ker(T_{B_\Lambda e^{-2aiz}})\neq0\}.$$
\end{remark}

In \cite{MP2} the following generalization of this quantity was studied:
$$R(J, \Theta):=\inf\{a: \Ker(T_{J\overline{\Theta}^a})\neq0\},$$
where $J$ and $S$ are meromorphic inner functions. This is {\it the generalized radius of completeness}. In some cases it equals to a corresponding density, see \cite[Theorem B]{MP2}, and our Subsection \ref{secondBM}.

Theorem \ref{borth} can be proved using Toeplitz kernel approach but we preferred to give a straightforward proof which uses only the Hadamard three circles theorem.

\end{document}